\documentclass[reqno, 12pt]{amsart}

\usepackage[colorlinks, linkcolor=blue, citecolor=red, urlcolor=cyan,pagebackref]{hyperref}

\allowdisplaybreaks

\usepackage{amsthm,amssymb,amscd,amsmath}
\usepackage{mathrsfs}
\usepackage{bbm}
\usepackage{graphicx}
\usepackage{epic,eepic}
\usepackage{tikz-cd}
\usepackage{stmaryrd}
\usepackage{mathtools}
\usepackage{stmaryrd}
\usepackage{diagbox}

\usepackage{ascmac}
\usepackage[normalem]{ulem}

\usepackage{accents}

\usepackage{here}
\usepackage{bm}
\usepackage[all]{xy}

\usepackage{xcolor} 
\definecolor{mygreen}{rgb}{0,0.7,0.3}
\definecolor{myblue}{rgb}{0,0.50,1.20}
\definecolor{myorange}{rgb}{1,0.5,0.1}

\definecolor{fillred}{rgb}{1,0.9,0.9}
\definecolor{fillgreen}{rgb}{0.9,1,0.9}

 \definecolor{refkey}{rgb}{0,0.7,0.3}
 \definecolor{labelkey}{rgb}{1,0,0}

\usepackage{tikz}
\usetikzlibrary{calc,decorations.markings}
\usetikzlibrary{arrows.meta}
\usetikzlibrary{positioning}
\usetikzlibrary{cd, intersections, calc, decorations.pathmorphing, arrows, decorations.pathreplacing}

\numberwithin{equation}{section}

\usepackage{version}

\usepackage{thmtools} 
\usepackage{cleveref}

\declaretheorem[name=Theorem,numberwithin=section]{thm}
\declaretheorem[sharenumber=thm,name=Corollary]{cor}
\declaretheorem[sharenumber=thm,name=Lemma]{lem}

\declaretheorem[sharenumber=thm,name=Proposition]{prop}

\declaretheorem[name=Theorem]{introthm}
\declaretheorem[sharenumber=introthm,name=Conjecture]{introconj}

\theoremstyle{definition}
\declaretheorem[sharenumber=thm,name=Definition]{dfn}

\declaretheorem[sharenumber=thm,name=Notation]{conv}
\declaretheorem[sharenumber=thm,name=Example]{ex}
\declaretheorem[sharenumber=thm,name=Remark]{rem}

\crefname{thm}{Theorem}{Theorems}
\crefname{cor}{Corollary}{Corollaries}
\crefname{lem}{Lemma}{Lemmas}
\crefname{lemdef}{Lemma-Definition}{Lemma-Definitions}
\crefname{prop}{Proposition}{Propositions}
\crefname{dfn}{Definition}{Definitions}
\crefname{defi}{Definition}{Definitions}
\crefname{ex}{Example}{Examples}
\crefname{claim}{Claim}{Claims}
\crefname{conj}{Conjecture}{Conjectures}
\crefname{conv}{Notation}{Notations}
\crefname{rem}{Remark}{Remarks}
\crefname{rmk}{Remark}{Remarks}
\crefname{prob}{Problem}{Problems}
\crefname{figure}{Figure}{Figures}
\crefname{table}{Table}{Tables}
\crefname{introthm}{Theorem}{Theorems}
\crefname{introconj}{Conjecture}{Conjectures}
\crefname{section}{Section}{Sections}
\crefname{subsection}{Section}{Sections}

\newcommand{\bZ}{\mathbb{Z}}
\newcommand{\bQ}{\mathbb{Q}}
\newcommand{\bR}{\mathbb{R}}
\newcommand{\bC}{\mathbb{C}}

\newcommand{\bP}{\mathbb{P}}
\newcommand{\bG}{\mathbb{G}}
\newcommand{\bA}{\mathbb{A}}

\newcommand{\bi}{{\mathbf{i}}}

\newcommand{\by}{{\mathbf{y}}}

\newcommand{\cF}{\mathcal{F}}

\newcommand{\cI}{\mathcal{I}}

\newcommand{\cN}{\mathcal{N}}
\newcommand{\cO}{\mathcal{O}}

\newcommand{\X}{\mathcal{X}}

\newcommand{\fS}{\mathfrak{S}}

\newcommand{\ve}{\varepsilon}

\newcommand{\Hom}{\mathrm{Hom}}
\newcommand{\sgn}{\mathrm{sgn}}

\newcommand{\mtx}[1]{\begin{pmatrix} #1 \end{pmatrix}}
\newcommand{\Ind}{\mathcal{I}}
\newcommand{\Fix}{\mathrm{Fix}}

\DeclareMathOperator{\Spec}{\mathrm{Spec}}

\DeclareMathOperator{\End}{\mathrm{End}}
\DeclareMathOperator{\Tr}{\mathrm{Tr}}

\allowdisplaybreaks

\makeatletter
\newcommand{\oset}[3][0ex]{%
  \mathrel{\mathop{#3}\limits^{
    \vbox to#1{\kern-2\ex@
    \hbox{$\scriptstyle#2$}\vss}}}}
\makeatother

\makeatletter
\newcommand{\osetnear}[3][0ex]{%
  \mathrel{\mathop{#3}\limits^{
    \vbox to#1{\kern-.3\ex@
    \hbox{$\scriptstyle#2$}\vss}}}}
\makeatother

\tikzset{
  qarrow/.style={->,shorten >=2pt,shorten <=2pt,>=latex
      },
}
\tikzset{
  head/.style={shorten >=4pt
      },
}
\tikzset{
  tail/.style={shorten <=4pt
      },
}
\newcommand{\qarrow}[2]{\draw[qarrow](#1) -- (#2)}

\tikzset{
  mid arrow/.style={postaction={decorate,decoration={
        markings,
        mark=at position .5 with {\arrow[#1]{stealth}}
      }}},
}

\tikzset{->-/.style 2 args={
	postaction={decorate},
	decoration={markings, mark=at position #1 with {\arrow[#2]{>}}} 
    },
    ->-/.default={0.5}{}
}
\tikzset{-<-/.style 2 args={
	postaction={decorate},
	decoration={markings, mark=at position #1 with {\arrow[#2]{<}}} 
    },
    -<-/.default={0.5}{}
}

\tikzset{
  qqarrow/.style={->,->-={0.87}{},shorten >=2pt,shorten <=2pt,>=latex
      },
}
\newcommand{\qqarrow}[2]{\draw[qqarrow](#1) -- (#2)}

\pgfdeclarelayer{bg}    
\pgfsetlayers{bg,main}  

\title{Index for a mutation loop}

\author[Tsukasa Ishibashi]{Tsukasa Ishibashi}
\address{Tsukasa Ishibashi, Mathematical Institute, Tohoku University, 
6-3 Aoba, Aramaki, Aoba-ku, Sendai, Miyagi 980-8578, Japan.}
\email{tsukasa.ishibashi.a6@tohoku.ac.jp}

\author[Yuji Terashima]{Yuji Terashima}
\address{Yuji Terashima, Mathematical Institute, Tohoku University, 
6-3 Aoba, Aramaki, Aoba-ku, Sendai, Miyagi 980-8578, Japan.}
\email{}

\date{\today}

\begin{document}

\begin{abstract}
Motivated by the 3d-3d correspondence, we introduce an index $\mathcal{I}_g^\phi$ for a mutation loop $\phi$ of a quiver and an integer $g \geq 0$. We prove its integrality $\mathcal{I}_g^\phi \in \bZ$ under certain conditions, which extends the integrality conjecture of Gang--Kim--Yoon \cite{GKY} for $3$-manifolds to the broader framework of mutation loops in cluster algebras. Our index also has an interpretation as the genus-$g$ partition function of a $(1+1)$-dimensional TQFT. We provide explicit computations for mutation loops of period one. In particular, in the case of the generalized Kronecker quivers, we give an explicit formula expressing the generating function of the indices for all genera $g$ as a ratio of polynomials. Altogether, we provide a new insight to the interplay between the geometry of $3$-manifolds and cluster algebra.
\end{abstract}

\maketitle

\setcounter{tocdepth}{1} 
\tableofcontents

\section{Introduction}\label{sec:intro}



In the rich and fruitful interplay between the physics and mathematics of quantum fields, quantum field theory continues to provide new perspectives and conjectures that often appear unexpected from a mathematical point of view. In particular, recent developments have revealed a number of intriguing relationships between quantum quantities arising in quantum field theory and topological invariants in mathematics. Recently, Gang, Kim, and Yoon \cite{GKY}, based on the 3d–3d correspondence \cite{TY1,TY2,DG,DGG,CCV}, proposed that, for a three-dimensional gauge theory on $\Sigma_g \times S^1$
labeled by a $3$-manifold $M$, the index $I_g$ can be identified with a sum of $(g-1)$-st powers of Reidemeister torsions associated with $M$. See also \cite{GKP,BGP}. This correspondence provides a bridge between the geometric information of a $3$-manifold and the quantum information of the corresponding gauge theory, and offers an important clue for uncovering new mathematical relationships between the two. 

Based on this proposal, they formulated a mathematical conjecture asserting that this sum of $(g-1)$-st powers of Reidemeister torsions should be an integer as a consequence of the integrality of the index \cite[Conjecture 3.1]{GKY}. Moreover, for $g=0$, they conjecture that this sum, namely the inverse sum of Reidemeister torsions, should be zero \cite[Conjecture 1.1]{GKY}. 
For mathematical proofs of the integrality conjecture for twist knots, see \cite{TTY26}, and for torus knots, see \cite{TYa26,MT26}. 

On the other hand, the 3d–3d correspondence was extended in \cite{TY} to the setting of three-dimensional gauge theories labeled by mutation loops of quivers rather than by $3$-manifolds. This extension makes it possible to transfer structures arising from $3$-manifolds into the framework of cluster theory \cite{FZ-CA1,FG09}. \cref{tab:cluster_topology} gives a first dictionary between the cluster theory and $2$ \& $3$-manifolds. 

\begin{table}[ht]
    \centering
    \begin{tabular}{l|l}
    Cluster theory  & $2$ \& $3$-manifolds  \\ \hline
    Cluster Poisson variety $\X$  & framed character variety $\X_{G,\Sigma}$ of a surface $\Sigma$ \cite{FG06}\\
    Mutation loop $\phi$ & Mapping class $\phi$ on $\Sigma$ \\
    Fixed point locus $\Fix_\phi$ & framed character variety $\X_{G,M_\phi}$ of the mapping torus $M_\phi$ \\ [2mm]
    \end{tabular}
    \caption{Dictionary between the cluster theory and $2$ \& $3$-manifolds}
    \label{tab:cluster_topology}
\end{table}

\bigskip
\subsection{Indices of a mutation loop}
In this paper, combining these two perspectives, we introduce a definition of the \emph{genus-$g$ index} $\Ind_g^\phi$ associated with a non-degenerate mutation loop $\phi$ for $g \geq 0$ (\cref{def:index}):
\begin{align*}
    \Ind_g^\phi:= \sum_{x \in \Fix_\phi(\overline{\bQ})} \det(1-(d\phi)_x)^{g-1} \in \overline{\bQ}.
\end{align*}
Here, $x$ runs over the $\overline{\bQ}$-fixed points of $\phi$ in the cluster Poisson torus. The non-degeneracy means $\det(1-(d\phi)_x) \neq 0$ for any $x$. In particular, the fixed point locus is $0$-dimensional. In view of \cite{KT}, each term $\det(1-(d\phi)_x)$ is a cluster-theoretic analogue of the non-acyclic adjoint Reidemeister torsion at $x$ (\cref{rem:torsion}). 
We also introduce a one-parameter refinement $\Ind_{g,t}^\phi \in \overline{\bQ}(t)$ (\cref{def:refined_index}), which recovers the index by $\Ind_{g,1}^\phi=\Ind_g^\phi$. See \cref{ex:A2_index,ex:A2_refined} for examples of type $A_2$. 

\subsection{Integrality for $g \geq 1$}
We then investigate the integrality of these (refined) indices. Our main result is the following:
\begin{introthm}[\cref{thm:finite_order,prop:shape_integral}]
Let $\phi$ be a non-degenerate mutation loop.
\begin{enumerate}
    \item If $\phi$ has finite order, then $\Ind_{g,t} \in \bZ[t]$ for all $g \geq 1$.
    \item Under a certain assumption on integrality of shape parameters, we have $\Ind_{g,t} \in \bZ[t]$ for all $g \geq 1$. 
\end{enumerate}
\end{introthm}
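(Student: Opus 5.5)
Both parts rest on the same reduction. The refined index $\Ind_{g,t}^\phi$ is a sum over the finite set $\Fix_\phi(\overline{\bQ})$ of a fixed polynomial expression in the local data at $x$ (the characteristic data of $(d\phi)_x$ together with the torsion $\det(1-(d\phi)_x)$), raised to the power $g-1$ when $g\ge 1$. The fixed point scheme is cut out by equations with rational coefficients, so $\mathrm{Gal}(\overline{\bQ}/\bQ)$ permutes $\Fix_\phi(\overline{\bQ})$. The summand is defined over $\bQ$ as a function of $x$, so the Galois group permutes the summands. Hence every coefficient of $\Ind_{g,t}^\phi$ (in $t$) is Galois invariant and lies in $\bQ$. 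It therefore suffices to show that each coefficient is an algebraic integer: a rational algebraic integer lies in $\bZ$. For $g\ge 1$ the exponent $g-1$ is nonnegative. So it is enough to show that, for every fixed point $x$, the entries of $(d\phi)_x$ are algebraic integers, and hence so are $\det(1-(d\phi)_x)$ and the coefficients of the refined summand. We also need that the summand has no denominators in $t$; for this I would use the explicit form of \cref{def:refined_index}, expecting a characteristic-polynomial type expression $\det(1-t\,(d\phi)_x)$ or similar.

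For (2), I would write $\phi$ as a composite of mutations $\mu_{k_1},\dots,\mu_{k_m}$ followed by a permutation. The Jacobian in the cluster Poisson coordinates $\log X_i$ is the product of the Jacobians of the individual mutations. Each such Jacobian has entries $0$, $\pm1$ and terms $\varepsilon_{ik}\,X_k^{\pm}/(1+X_k^{\pm})$ (up to sign conventions) evaluated at the intermediate seeds. The assumption on shape parameters should say precisely that the intermediate values $z=1+X_{k_j}$, or the relevant shape parameters $X/(1+X)$ and $1/(1+X)$, are algebraic integers at every fixed point; I would take that as the hypothesis. Then every entry of $(d\phi)_x$ is an algebraic integer, and the conclusion follows from the reduction above.

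For (1), with $\phi^N=\mathrm{id}$, I would differentiate at a fixed point $x$ to get $((d\phi)_x)^N=1$. Thus $(d\phi)_x$ is diagonalizable with eigenvalues roots of unity $\zeta_i$. Then $\det(1-(d\phi)_x)=\prod_i(1-\zeta_i)$ and $\det(1-t(d\phi)_x)=\prod_i(1-t\zeta_i)$ have algebraic integer coefficients. The reduction then gives $\Ind_{g,t}^\phi\in\bZ[t]$ without any hypothesis on shape parameters.

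The main obstacle is making the rationality step rigorous. We must check that the Galois action really preserves the fixed locus with the correct multiplicities; non-degeneracy gives a reduced $0$-dimensional scheme, so this is fine. We must also check that the refined summand is a genuinely $\bQ$-rational function of $x$ and $t$. A further subtlety in (1): the cluster Poisson torus is $(\overline{\bQ}^\times)^n$, and $(d\phi)_x$ should be taken in logarithmic coordinates, where the differential is intrinsically defined and the eigenvalue argument applies.
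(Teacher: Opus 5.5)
Your proposal is correct. Its integrality inputs are the paper's. In (1), $(d\phi)_x^m=\mathrm{id}$ forces the eigenvalues to be roots of unity. In (2), the chain rule through the individual mutations (Mizuno's Jacobian formula) writes the Jacobian as a product of permutation matrices and matrices $z_-F_{k,+}^\top+z_+F_{k,-}^\top$ with integral $F_{k,\pm}$, up to the diagonal factors $\by$. At a fixed point the initial and terminal $\by$ coincide, so your logarithmic Jacobian is $\by(x)^{-1}(d\phi)_x\by(x)$ and has the same characteristic polynomial. That conjugation is the actual justification; the tangent map at a fixed point is intrinsic in any chart.

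Where you differ is the descent to $\bZ[t]$.
\begin{itemize}
\item The paper rewrites $\Ind_{g,t}^\phi=\Tr_{R_\phi[t]/\bQ[t]}(T(t)^{g-1})$ with $T(t)=\det(t-d\phi_\bullet)$. It then shows $T(t)\in\cO_{R_\phi}[t]$; in (1) it checks $\sigma(a_i)\in\Omega$ for every embedding $\sigma$ and lifts back to $\cO_{R_\phi}$. Integral closedness of $\bZ[t]$ then puts the characteristic polynomial of $m_{T(t)}$ in $\bZ[t][X]$, and Newton's identities finish.
\item You stay at the $\overline{\bQ}$-points. Galois stability of the finite set $\Fix_\phi(\overline{\bQ})$ gives rational coefficients, and each coefficient is a sum of algebraic integers.
\end{itemize}
Your route is shorter and skips the round trip through $\cO_{R_\phi}$ and the integral closure of $\bZ[t]$. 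The paper's packaging yields integrality of the whole characteristic polynomial of $m_{T(t)}$ and fits the trace and Frobenius-algebra picture of Section~5. Your argument gives that too, since the elementary symmetric functions of the $\det(t-(d\phi)_x)$ are also Galois-invariant algebraic integers.

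A few small points to make explicit.
\begin{itemize}
\item The refined summand is $\det(t-(d\phi)_x)^{g-1}$, which is monic in $t$, so no denominators arise for $g\geq 1$.
\item In (2) the hypothesis must include $\Fix_\phi=\Fix_\phi^\ast$: no intermediate $y^{(\nu)}_{k_\nu}$ may equal $-1$ at a fixed point. Otherwise neither the chain rule through the individual mutations nor the shape parameters are available there. The paper assumes this explicitly, and the $\mathbf{a}=\mathbf{0}$ period-one example shows $\Fix_\phi^\ast\subsetneq\Fix_\phi$ can happen.
\item The required integrality is that of $z_+=y/(1+y)$ and $z_-=1/(1+y)$, not of $1+y$.
\item The multiplicity worry is moot. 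The index is a sum over the set $\Fix_\phi(\overline{\bQ})$, and non-degeneracy is used only for its finiteness.
\end{itemize}
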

See \cref{prop:shape_integral} for a precise condition on the second statement. Proofs of both statements are based on the interpretation of $\Ind_{g}^\phi$ as a trace in the Artinian ring $R_\phi=\cO(\Fix_\phi)$. The second statement further relies on the Mizuno's Jacobian formula \cite{Mizuno:Jac}. 

\begin{introconj}\label{conj:integrality}
Let $\phi$ be a non-degenerate mutation loop of an indecomposable seed. Assume $\phi$ is either finite-order or primitive (\cref{def:primitive}).
Then $\Ind_g^\phi \in \bZ$ for any $g \geq 1$.    
\end{introconj}

This gives a cluster-algebraic analogue of the integrality conjecture of Gang--Kim--Yoon for $3$-manifolds to the broader framework of mutation loops in cluster algebras. As we will see in \cref{ex:refined_non_integral}, the integrality of the refined index turns out not to be true in general.

\subsection{Frobenius structure and the vanishing for $g=0$}
In \cref{subsec:Frob_T}, we also provide a $(1+1)$-dimensional interpretation of our index. The fixed point algebra $R_\phi$ admits a natural Frobenius structure such that $\Ind_g^\phi$ can be interpreted as the genus-$g$ partition function $Z(\Sigma_g)$ of the associated $(1+1)$-dimensional TQFT (\cref{thm:TQFT}). This description suggests an intimate connection to the Vafa--Intriligator formula \cite{SiebertTian} on quantum cohomology. A presentation of $R_\phi$ as an Artinian complete intersection leads to another Frobenius structure on $R_\phi$ (\cref{subsec:residue}). Following the work of Siebert--Tian \cite{SiebertTian}, we then obtain a sufficient condition for the vanishing of the genus-$0$ index $\Ind_0^\phi$ by a degree argument. This is based on the global residue theorem: a connection between the vanishing conjecture and the global residue theorem was also observed by
Gang--Kim--Yoon \cite{GKY}. 

\subsection{Examples}
We exhibit our results for the mutation loops of period one, which have been classified by Fordy–Marsh \cite{FM11}, including the case of generalized Kronecker quivers. The result is as follows:
\begin{introthm}[\cref{thm:Fordy_Marsh,thm:FM_ind0}]
Let $\phi$ be a mutation loop of period one associated with a palindromic sequence $\mathbf{a}=(a_2,\dots,a_N)$, and let $A^\pm:=2-\sum_{j=2}^N[\pm a_j]_+$. Assume $(A^+,A^-)\neq (0,0)$.
\begin{enumerate}
    \item We have $\Ind_g^\phi \in \bZ$ for all $g \geq 1$. If moreover $A^+\neq A^-$, then $\Ind_{g,t}^\phi \in \bZ[t]$ for $g \geq 1$.
    \item $\Ind_0^\phi=0$ if and only if $A^+A^- <0$ or $(A^+,A^-) \in \{(\pm 1,0), (0,\pm 1)\}$.
\end{enumerate}
\end{introthm}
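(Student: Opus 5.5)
The statement is about the Fordy--Marsh period-one loops, so I would first make everything explicit for that family. For a palindromic sequence $\mathbf{a}=(a_2,\dots,a_N)$, the loop $\phi=\rho^{-1}\circ\mu_1$ mutates at vertex $1$ and then cyclically relabels $i\mapsto i-1$. On the cluster Poisson torus it is given by
\[
x_i' = x_{i+1}\,(1+x_1^{-\mathrm{sgn}(a_{i+1})})^{-a_{i+1}} \ \text{(suitably normalized)}, \qquad x_N' = x_1^{-1}.
\]
The fixed point equations $\phi(x)=x$ therefore form a recursion. Iterating it expresses every $x_i$ as $x_1$ (or $x_1^{\pm1}$) times a product of powers of $(1+x_1)$ and $(1+x_1^{-1})$. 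I will write $u=x_1$.

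Closing up the cycle gives a single equation in $u$. After clearing denominators I expect it to be
\[
u^{2}=\prod_j (1+u)^{[a_j]_+}(1+u^{-1})^{-[-a_j]_+}
\]
up to signs, using palindromicity. The exponents combine exactly into $A^\pm$, so $\Fix_\phi$ is cut out by one polynomial equation of the shape
\[
P(u)=u^{m}-c\,(1+u)^{k}=0,
\]
with $m,k$ determined by $A^+$ and $A^-$. The hypothesis $(A^+,A^-)\neq(0,0)$ is exactly what makes $P$ nonconstant. It also gives non-degeneracy, which follows from $P$ having no repeated roots, and I would check this from the explicit form.

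Next I would compute the Jacobian. The matrix $d\phi$ is a companion-type matrix, so $\det(1-(d\phi)_x)$ collapses to a single rational function $\tau(u)$. My expectation, consistent with the Mizuno formula, is that $\tau(u)$ is proportional to $P'(u)\cdot(\text{unit})$, more precisely to $uP'(u)/P(u)$-type data evaluated on $P=0$. The index then becomes
\[
\Ind_g^\phi=\sum_{P(u)=0}\tau(u)^{g-1}.
\]
For $g\geq 1$ this is the trace of multiplication by $\tau^{g-1}$ in $\bZ[u^{\pm1}, (1+u)^{-1}]/(P)$. That trace is integral provided $\tau$ lies in this ring and $P$ is monic with unit constant term after normalization. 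This is the mechanism of \cref{prop:shape_integral}, since the shape parameters $u$ and $1+u$ are units there.

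For the refined index, I would write $\Ind_{g,t}$ as a trace of a $t$-deformed matrix. Integrality in $\bZ[t]$ needs the relevant characteristic data to be integral. When $A^+=A^-$ a $\bZ/2$ symmetry $u\mapsto u^{-1}$ pairs roots, and half-integer coefficients can appear, which is why the refined statement requires $A^+\neq A^-$.

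For $g=0$ I would use the residue theorem. Part (2) asks about
\[
\Ind_0^\phi=\sum_{P(u)=0}\tau(u)^{-1}=\sum_{P(u)=0}\frac{\text{(rational)}}{P'(u)}.
\]
I would rewrite this as the sum of residues of $\omega=h(u)\,du/P(u)$ at the zeros of $P$. By the global residue theorem, that sum equals minus the residues at $0$, $-1$ and $\infty$. The degree bookkeeping in terms of $A^\pm$ then decides vanishing. When $A^+A^-<0$, the degrees force all the boundary residues to vanish. For $(A^+,A^-)=(\pm1,0)$ or $(0,\pm1)$, there is a single root (or a trivial configuration) and the residue at infinity is zero. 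Otherwise a nonzero residue survives at $0$ or $\infty$, which I would compute explicitly, for instance as a binomial coefficient, and show is nonzero.

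I expect the main obstacle to be the case-by-case residue computation for the "only if" direction of (2). There I would have to verify, across the sign patterns of $A^\pm$, that the surviving boundary residue is nonzero. I would try first to express it in closed form as a binomial-type expression in $|A^\pm|$.
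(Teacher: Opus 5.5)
Your outline has the same architecture as the paper: a one-variable reduction, integrality of a trace for $g\ge 1$, and a residue theorem for $g=0$. However, the integrality mechanism is attached to the wrong variable and does not work.

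\textbf{Part (1).} The assertion that $u=y_1$ and $1+u$ are units on $\Fix_\phi$ fails whenever $A^+A^-=0$. For the Kronecker quiver with $K=2$ one has $(A^+,A^-)=(2,0)$, so refined integrality is claimed. Yet the fixed-point equation $y^2=(1+y)^2$ has the single root $y=-1/2$, and your $P=-2u-1$ is not monic. Also, $u$ and $1+u$ are not the shape parameters; those are $z_+=u/(1+u)$ and $z_-=1/(1+u)$.

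The missing ingredient is an explicit formula for the torsion. Mizuno's determinant formula plus palindromicity give $T(t)=A^+(t)z_-+A^-(t)z_+=A^+(t)+(A^-(t)-A^+(t))z$ with $z=z_+$. Here $z$ is a root of $\pm f_\phi$ (e.g.\ $z^{A^+}-(1-z)^{A^-}$ if $A^\pm>0$), which is monic over $\bZ$ precisely because the top-degree terms do not cancel when $A^+\neq A^-$. That, rather than a $u\mapsto u^{-1}$ pairing, is where the hypothesis $A^+\neq A^-$ enters; in the $K=2$ example, $z=-1$ and $T(t)=(1+t)^2$. When $A^+=A^-$ one instead uses that $T=A^+$ is constant. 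Your ``$uP'(u)/P(u)$ evaluated on $P=0$'' is not a usable substitute for this formula.

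Non-degeneracy cannot simply be read off either. A repeated root is the same as a common zero of the gluing equation and $T=A^++(A^--A^+)z$, since the logarithmic derivative of $z^{-A^+}(1-z)^{A^-}$ is $-T/(z(1-z))$. Excluding such a common zero needs the arithmetic fact $a^a(a-b)^b\neq\pm b^b(a-b)^a$ for $ab(a-b)\neq 0$. Finally, for $\mathbf{a}=\mathbf{0}$ the loop is defined at $y_1=-1$, which is a second fixed point that inverting $1+u$ discards.

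\textbf{Part (2).} The ``only if'' direction is precisely what you postpone, and the residue bookkeeping as stated is incorrect. With $h=P'/\tau$, the form $\omega$ has an extra pole at the zero of $\tau=(A^++A^-u)/(1+u)$. The boundary residues then need not vanish individually, even when $A^+A^-<0$. For $K=4$, $(A^+,A^-)=(2,-2)$ and $P=(1+u)^4-u^2$, one gets $\mathrm{Res}_{u=1}\omega=-2$ and $\mathrm{Res}_{\infty}\omega=2$, so $\Ind_0=0$ comes from a cancellation. If you instead take a polynomial $h\equiv P'/\tau$ modulo $P$, bounding its degree is the whole point.

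The paper does this in the variable $z$. For $A^+>0>A^-$, the presentation $f_\phi=z^{A^+}(1-z)^{-A^-}-1$ satisfies $f_\phi'=z^{A^+-1}(1-z)^{-A^--1}\,T$ identically. So $J/T$ is represented in degree $<\deg f_\phi-1$, and the Siebert--Tian residue vanishes. For $A^\pm>0$ with $A^+\neq A^-$, $U=z^{A^+-1}+(1-z)^{A^--1}$ satisfies $UT-f_\phi'=(A^--A^+)f_\phi$ and has the same leading term as $f_\phi'/\deg f_\phi$. This gives $\Ind_0=1$, and $-1$ when $A^\pm<0$.

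The remaining cases are explicit sums:
\begin{itemize}
\item $A^+=A^-=c$ gives $(|c|-1)/c$, with $|c|$ even by palindromicity.
\item $(c,0)$ gives $\frac{1}{c}\sum_{z^c=1,\,z\neq 1}\frac{1}{1-z}=\frac{|c|-1}{2c}$, and likewise for $(0,c)$.
\item $\mathbf{a}=\mathbf{0}$ gives $1$.
\end{itemize}
None of these is a binomial coefficient. For $(\pm1,0)$ and $(0,\pm1)$ the fixed-point set is empty (not a single root), which is why $\Ind_0=0$ there.
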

Several remarks are in order. 
(1) When $A^+=A^-$, we have examples for which $\Ind_g^\phi \in \bZ$ for all $g \geq 1$ but $\Ind_{g,t}^\phi \notin \bZ[t]$ (\cref{ex:refined_non_integral}). For the case of generalized Kronecker quivers, explicit values of $\Ind_g^\phi$ and $\Ind_{g,t}^\phi$ are shown in \cref{tab:Kronecker_index} and \cref{tab:Kronecker_refined_index}, respectively. 
(2) See \cref{thm:FM_ind0} for precise values of $\Ind_0^\phi$ including the non-zero cases. \cref{tab:index-g0} shows the explicit values for small $(A^+,A^-)$. The non-vanishing is possibly related to ``missing'' fixed points outside the initial cluster torus (cf. \cref{rem:GHK}). We do not pursue this direction in this paper.

In the special case of generalized Kronecker quivers, we also give an explicit formula expressing the generating function of the indices $\Ind_g^\phi$ for all genera $g$ as a ratio of polynomials (\cref{thm:generating}). This generating-function description allows us not only to compute the index for each genus, but also to describe all genera simultaneously in a unified manner.

\subsection{Further perspectives}
The definition of the index introduced in this paper is based on a cluster-theoretic
analogue of Reidemeister torsion for $3$-manifolds. We expect that this analogue will play an important role as a new framework for unifying $3$-manifolds and mutation loops in cluster algebras.
In particular, it may provide a cluster-theoretic
counterpart of projects associating modular tensor categories to $3$-manifolds \cite{CGK,CQW}, the knot–quiver correspondence \cite{KRSS1,KRSS2,EKL}, and the knot–gauge correspondence \cite{MTT}. More broadly, these connections may reveal new relationships between cluster algebra
and gauge theory that are not visible from the existing frameworks.

\subsection*{Organization of the paper}
In \cref{sec:cluster}, we recall the basics on mutation loops, their description by mutation networks \cite{TY,Mizuno:Jac}, and their refinement by Mizuno \cite{Mizuno:thesis}. These provide a useful tool to investigate the fixed point locus. 
In \cref{sec:index}, we introduce the index of a mutation loop, its refined version, and the generating function with the simplest example of type $A_2$. The integrality of the (refined) index for $g \geq 1$ is discussed in \cref{sec:integrality}. \cref{sec:Frobenius} provides a $(1+1)$-dimensional interpretation, and a vanishing condition for the genus-$0$ index. Finally, in \cref{sec:example}, we give explicit computations for the mutation loops of period one.

\subsection*{Acknowledgements}
  The authors would like to express our gratitude to Y.~Mizuno, R.~Tange, and Y.~Yamaguchi for helpful discussions. 
  T. I. is supported by JSPS KAKENHI Grant Number JP24K16914.
  Y.T. is supported by JSPS KAKENHI Grant Numbers 23K22388, 25K06969, 26K22256. 

\subsection*{AI disclosure}
The authors made limited use of ChatGPT 6-Astra (OpenAI) for preliminary mathematical discussions, consistency checks, assistance with Mathematica coding, grammar correction and search for (counter-)examples. No counterexample to \cref{conj:integrality} has been found in this use.
All mathematical arguments have been independently reconstructed and finalized by the authors. 
The manuscript was written entirely by the authors, and all the references were independently verified. 
The authors take full responsibility for the content of this paper.

\section{Mutation loops}\label{sec:cluster}

\subsection{Seeds and mutations}
We fix the following data:
\begin{itemize}
	\item A finite set $I$ of indices.
	\item A field $\cF$ isomorphic to the field $\bQ(z_i \mid i \in I)$ of rational functions in $|I|$ variables with coefficients in $\bQ$, called the \emph{ambient field}. 
\end{itemize}

\begin{dfn}
A \emph{(labeled, skew-symmetric) seed} in $\cF$ is a pair $\bi=(B,\mathbf{y})$, where
\begin{itemize}
    \item $B=(b_{ij})_{i,j \in I}$ is an $I \times I$ skew-symmetric integral matrix.
    \item $\mathbf{y} = (y_i)_{i \in I}$ is a tuple of elements of $\cF$ such that $\cF \cong \bQ(y_i \mid i \in I)$. 
\end{itemize}
We call the matrix $B$ the \emph{exchange matrix} of $\bi$. Each element $y_i$ is called a \emph{cluster Poisson variable}.
\end{dfn}
The exchange matrix $B$ can be encoded in a quiver $Q$ with vertex set $I$ such that 
\begin{align*}
    b_{ij}=\#\{\text{arrows $i \to j$}\}  - \#\{\text{arrows $j \to i$}\}.
\end{align*}

We then define \emph{mutations} of seeds. 
We will use the notation $[x]_+:=\max\{x,0\}$, and let $\sgn (x) \in \{-1,0,1\}$ denote the sign for any $x \in \bR$. 

\begin{dfn}
Let $\bi$ be a seed in $\cF$. 
For each $k \in I$, the \emph{seed mutation directed to $k$} creates a new seed $\bi'=\mu_k(\bi)=(B',\mathbf{y}')$ in $\cF$ by the following rule.
\begin{align}
	b'_{ij} &:= \begin{cases}
		-b_{ij} & \mbox{if $k \in \{i,j\}$}, \\
		b_{ij} + [b_{ik}]_+ [b_{kj}]_+ - [-b_{ik}]_+ [-b_{kj}]_+ & \mbox{otherwise}.
	\end{cases} \label{eq:mutation_matrix} \\
	y'_i &:= \begin{cases}
		y_k^{-1} & \mbox{if $i=k$}, \\
		y_i(1+y_k^{-\sgn (b_{ki})})^{-b_{ki}} & \mbox{if $i \neq k$}.
	\end{cases} \label{eq:mutation_X}
\end{align}
We call the transformation \eqref{eq:mutation_matrix} the \emph{matrix mutation}, the transformation \eqref{eq:mutation_X} the \emph{cluster Poisson transformation}.
\end{dfn}
A seed $\bi=(B,\mathbf{y})$ is said to be \emph{indecomposable} if $B$ cannot be decomposed into a non-trivial direct sum. It is equivalent to the condition that the associated quiver $Q$ is connected. This condition is mutation-invariant.




\subsection{Mutation loops}
One can consider any successive application of mutations $\mu_{k_n}\dots \mu_{k_2}\mu_{k_1}(\bi)$ for $k_1,\dots,k_n \in I$. 
For $\sigma \in \fS_I$, we define a new seed $\bi'=\sigma(\bi)=(B',\mathbf{y}')$ by the rule
\begin{align}\label{eq:permutations}
	b'_{ij} := b_{\sigma^{-1}(i),\sigma^{-1}(j)}, \quad y'_j := y_{\sigma^{-1}(j)}.
\end{align}
Note the relation $\sigma \mu_k \sigma^{-1} = \mu_{\sigma(k)}$ for $k \in I$, $\sigma \in \fS_I$.

\begin{dfn}
Let $\bi$ be a seed in $\cF$. A sequence of seeds
\begin{align}\label{eq:mutation_seq}
    \phi: \bi=\bi^{(0)} \to \bi^{(1)} \to \dots \to \bi^{(L)},
\end{align}
where each step $\bi^{(\nu)} \to \bi^{(\nu+1)}$ is either a mutation or a permutation of labels, is called a \emph{mutation sequence} with initial seed $\bi$. Write $\bi^{(\nu)}=(B^{(\nu)},\mathbf{y}^{(\nu)})$ for $\nu=0,\dots,L$. 
The mutation sequence $\phi$ is called a \emph{mutation loop} if $B^{(L)}=B^{(0)}$. We identify two mutation loops if the terminal seed $\bi^{(L)}$ is the same.
We may also write $\phi$ as a word of symbols $\mu_k$ ($k \in I$) and $\sigma \in \fS_I$. 
\end{dfn}
Let $\bG_m$ be the multiplicative group scheme such that $\bG_m(K)=K^\ast$ for any field $K$. The cluster Poisson transformation \eqref{eq:mutation_X} can be viewed as a birational map 
\begin{align*}
    \mu_k: (\bG_m)^I \dashrightarrow (\bG_m)^I,
\end{align*}
where the variables $(y_i)_{i \in I}$ and $(y'_i)_{i \in I}$ are regarded as coordinate functions on $(\bG_m)^I$ on the source and the target, respectively. 
A mutation loop $\phi$ induces a birational map $\phi: (\bG_m)^I \dashrightarrow (\bG_m)^I$, still denoted by the same symbol, as a composite of these birational maps corresponding to mutations and coordinate permutations. 
In other words, its pull-back action
\begin{align*}
    \phi^\ast: \bQ(y_i \mid i \in I) \to \bQ(y_i \mid i \in I)
\end{align*}
on coordinates is given by the composite of the cluster Poisson transformations \eqref{eq:mutation_X} and the permutations \eqref{eq:permutations} in the reversed order. 

\begin{dfn}[Fixed point locus]
Let $U_\phi \subset (\bG_m)^I$ be the maximal open subscheme where the $\phi$-action is defined. Then we define the \emph{fixed point locus} 
\begin{align*}
    \Fix_\phi \subset U_\phi
\end{align*}
as the equalizer of $\phi|_{U_\phi}: U_\phi \to (\bG_m)^I$ and the inclusion $U_\phi \to (\bG_m)^I$. 
We denote by $R_\phi:=\cO(\Fix_\phi)$ the $\bQ$-algebra of regular functions on $\Fix_\phi$.
\end{dfn}

\begin{ex}[Type $A_2$]\label{ex:A2_loop}
Consider the seed $\bi=(\begin{psmallmatrix}0 & -1 \\ 1 & 0\end{psmallmatrix},(y_1,y_2))$ of type $A_2$, and the mutation loop $\phi: \bi=\bi^{(0)} \xrightarrow{\mu_1} \bi^{(1)} \xrightarrow{(1\ 2)}\bi^{(2)}$. We have $\phi=(1\ 2)\mu_1$ in the word notation.
The induced birational map $\phi: (\bG_m)^2 \dashrightarrow (\bG_m)^2$ is computed on coordinates as
\begin{align}\label{eq:A2_rational}
    \phi^\ast: \mtx{y_1 \\ y_2} \mapsto \mtx{y_2 \\ y_1} \mapsto \mtx{y_2(1+y_1) \\ y_1^{-1}}.
\end{align}
The fixed point locus is defined by the equations $y_1=y_2(1+y_1)$, $y_2=y_1^{-1}$. Observe that it does not intersect with the line $y_1=-1$ on which $\mu_1$ is not defined.
Therefore
\begin{align}\label{eq:A2_ring}
    R_\phi \cong \bQ[y]/(y^2-y-1)
\end{align}
with $y:=y_1$.
\end{ex}

The following formula is useful: 

\begin{prop}[Jacobian formula {\cite[Proposition 4.1]{Mizuno:Jac}}]\label{prop:Jacobian}
Let $\bi'=\mu_k(\bi)$ be a mutation directed to $k \in I$, and $p \in \bG_m(\overline{\bQ})^I$ a point 
such that $y_k(p)\neq -1$. 
Then, the Jacobian matrix $J(\mu_k)_{p}:=\displaystyle\bigg(\frac{\partial y'_i}{\partial y_j}(p)\bigg)_{i,j \in I}$ is given by
\begin{align*}
    J(\mu_k)_{p}=\by'(\mu_k(p))\cdot \big(z_-(p) F_{k,+}(B)^\top + z_+(p) F_{k,-}(B)^\top \big) \cdot\by^{-1}(p),
\end{align*}
where
\begin{align*}
    &z_+:=\frac{y_k}{1+y_k}, \quad z_-:=\frac{1}{1+y_k}, \\
    &\by(p) :=\mathrm{diag}(y_i(p) \mid i \in I), \quad
    \by'(p):=\mathrm{diag}(y'_i(p)\mid i \in I) 
\end{align*}
and $F_{k,\pm}(B) \in \mathrm{Mat}_{I\times I}(\bZ)$ is an integral matrix obtained from $B$: see \cite[(3.19)]{Mizuno:Jac}.
\end{prop}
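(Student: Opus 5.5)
We prove the Jacobian formula of \cref{prop:Jacobian} by differentiating \eqref{eq:mutation_X} entry by entry in logarithmic coordinates, then checking that the result has the stated shape. The conjugation by $\by'$ and $\by^{-1}$ reflects the identity $\frac{\partial y'_i}{\partial y_j} = \frac{y'_i}{y_j}\,\frac{\partial \log y'_i}{\partial \log y_j}$. So it suffices to show that the logarithmic Jacobian $\big(\partial \log y'_i/\partial \log y_j\big)_{i,j}$, evaluated at $p$ with $y_k(p)\neq -1$, equals
\begin{align*}
z_-(p)\,F_{k,+}(B)^\top + z_+(p)\,F_{k,-}(B)^\top,
\end{align*}
where the matrices $F_{k,\pm}(B)$ are given by the recipe in \cite[(3.19)]{Mizuno:Jac}.

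\emph{Step 1: the row $i=k$.} Since $\log y'_k = -\log y_k$, this row has a single nonzero entry, equal to $-1$, in column $k$. Because $z_-+z_+=1$, this is consistent with both $F_{k,+}$ and $F_{k,-}$ having entry $-1$ at position $(k,k)$ and zeros elsewhere in that row/column pairing.

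\emph{Step 2: the rows $i\neq k$.} Here
\begin{align*}
\log y'_i=\log y_i - b_{ki}\log\bigl(1+y_k^{-\sgn(b_{ki})}\bigr).
\end{align*}
Differentiating with respect to $\log y_j$ gives $\delta_{ij}$ plus a contribution in column $j=k$ only. The case analysis on the sign of $b_{ki}$ runs as follows.
\begin{itemize}
\item If $b_{ki}>0$, the derivative of $-b_{ki}\log(1+y_k^{-1})$ with respect to $\log y_k$ is $b_{ki}\,\frac{y_k^{-1}}{1+y_k^{-1}} = b_{ki}\,z_-$.
\item If $b_{ki}<0$, the derivative of $-b_{ki}\log(1+y_k)$ is $-b_{ki}\,z_+ = [-b_{ki}]_+\,z_+$.
\end{itemize}
Writing $\delta_{ij} = (z_-+z_+)\delta_{ij}$, the $(i,k)$ entry becomes
\begin{align*}
z_-\,[b_{ki}]_+ + z_+\,[-b_{ki}]_+ .
\end{align*}
The logarithmic Jacobian therefore equals $z_-E_+ + z_+E_-$, where
\begin{align*}
E_\pm := \text{identity with row $k$ replaced by } -e_k, \text{ plus the entries } [\pm b_{ki}]_+ \text{ at positions } (i,k),\ i\neq k.
\end{align*}

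\emph{Step 3: comparison with $F_{k,\pm}(B)$.} The entries in Step 2 are of the form $[\pm b_{ki}]_+$ placed in column $k$. Transposing, $E_\pm^\top$ is the identity modified in row $k$ with entries $[\pm b_{ki}]_+$. This is exactly the tropical mutation matrix defining $F_{k,\pm}(B)$ in \cite{Mizuno:Jac}, up to the convention for sign and transpose.

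The main obstacle is this last matching of conventions with \cite[(3.19)]{Mizuno:Jac}: possibly an $\epsilon$-sign convention, or $b_{ik}$ versus $b_{ki}$ (these differ by skew-symmetry). We resolve it by identifying $F_{k,\pm}(B)$ with the linear map whose transpose is the log-differential computed above. Finally, the hypothesis $y_k(p)\neq -1$ is exactly what makes $z_\pm(p)$ finite and $\mu_k$ regular at $p$, and the formula then follows by conjugating back with $\by'(\mu_k(p))$ and $\by(p)^{-1}$.
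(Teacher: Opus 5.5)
Your proposal is correct, and it is the direct logarithmic differentiation underlying Mizuno's Proposition 4.1, which the paper quotes without proof. The hedge in Step 3 can be dropped: your $E_\pm^\top$ (entry $-1$ at $(k,k)$, entries $[\pm b_{kj}]_+$ at $(k,j)$ for $j\neq k$, identity elsewhere) is precisely the standard tropical mutation matrix $F_{k,\pm}(B)$, so the computation itself forces the pairing $z_-\leftrightarrow F_{k,+}$, $z_+\leftrightarrow F_{k,-}$ rather than leaving a convention to be chosen.
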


\begin{rem}\label{rem:GHK}
The mutation loops form a group $\Gamma_{[\bi]}$ called the cluster modular group, which acts on the cluster Poisson variety $\X_{[\bi]}$ associated with the mutation class of $\bi$ \cite{FG09}. Each seed $\bi'$ gives rise to a toric birational chart $(\bG_m)^I \cong T_{\bi'} \subset \X_{[\bi]}$, and the action of $\phi \in \Gamma_{[\bi]}$ permutes them. The birational action $\phi: (\bG_m)^I \dashrightarrow (\bG_m)^I$ considered above is the restriction of this action to $T_{\bi}$. 

In general, it is possible that the set $\Fix_\phi$ miss other fixed points in $\X_{[\bi]} \setminus T_{\bi}$, but we do not pursue them in this paper for simplicity of the algebraic discussion. As noted in \cite[Remark 2.6]{GHK}, it is not clear if $\X_{[\bi]}$ is Noetherian or not. In practice, one can consider a subscheme $\X_{[\bi]}^\mathrm{ft}$ \emph{loc.~cit.}~for a better approximation of $\X_{[\bi]}$.
\end{rem}

\begin{dfn}[Primitivity]\label{def:primitive}
Given a mutation loop $\phi$ and a positive integer $m \in \bZ_{>0}$, define the iterated mutation loop $\phi^m$ by periodically repeating the sequence of mutations and permutations defining $\phi$. 
We say that a mutation loop $\phi$ is \emph{primitive} if it does not admit an expression $\phi=\psi^m$ with a mutation loop $\psi$ and $m \geq 2$.
\end{dfn}
For example, we have $\phi^2=\mu_2\mu_1$ in \cref{ex:A2_loop}.

\subsection{Mutation networks}
The \emph{mutation network} introduced by Terashima--Yamazaki \cite{TY} is a bipartite graph that captures the structure of the fixed point equation from a 3D perspective. The associated \emph{Neumann--Zagier matrices} are introduced by Mizuno \cite{Mizuno:Jac}.

\begin{figure}[ht]
    \centering
\begin{tikzpicture}
\def\R{1.5}
\begin{scope}[rotate=-90]
\foreach \i in {0,2,3}
\draw[gray!60,very thick,shorten >=2pt,shorten <=2pt] (\R*\i,0) -- (\R*\i,2);
\draw[dashed] (\R,0)--(\R,2);
\qarrow{\R,1}{0,0};
\qarrow{\R,1}{2*\R,0};
\qarrow{3*\R,0}{\R,1};
\foreach \i in {0,1,2,3} {
    \filldraw[fill=white](\R*\i,0) circle(2pt);
    \filldraw[fill=white](\R*\i,2) circle(2pt);
    }
\fill (\R,1) circle(2pt);
\node[left=0.2em,scale=0.9] at (\R,0) {$w_k^{(\nu)}$}; 
\node[right=0.2em,scale=0.9] at (\R,2) {$w_k^{(\nu+1)}$};
\node[above right,scale=0.9] at (\R,1) {$m_\nu$}; 
\node[left=0.2em,scale=0.9] at (2*\R,0) {$w_i^{(\nu)}$}; 
\node[right=0.2em,scale=0.9] at (2*\R,2) {$w_i^{(\nu+1)}$};
\node at (3.5*\R,1) {$\mu_k: \bi^{(\nu)} \to \bi^{(\nu+1)}$};
\end{scope}

\begin{scope}[xshift=7cm,rotate=-90]
\draw[gray!60,very thick,shorten >=2pt,shorten <=2pt] (0,0) to[out=90,in=-90] (2*\R,2);
\draw[gray!60,very thick,shorten >=2pt,shorten <=2pt] (\R,0) to[out=90,in=-90] (0,2);
\draw[gray!60,very thick,shorten >=2pt,shorten <=2pt] (2*\R,0) to[out=90,in=-90] (\R,2);
\draw[gray!60,very thick,shorten >=2pt,shorten <=2pt] (3*\R,0) -- (3*\R,2);
\foreach \i in {0,1,2,3} {
    \filldraw[fill=white](\R*\i,0) circle(2pt);
    \filldraw[fill=white](\R*\i,2) circle(2pt);
    }
\node[left=0.2em,scale=0.9] at (0,0) {$w_{\sigma^{-1}(i)}^{(\nu)}$}; 
\node[right=0.2em,scale=0.9] at (2*\R,2) {$w_i^{(\nu+1)}$};
\node at (3.5*\R,1) {$\sigma: \bi^{(\nu)} \to \bi^{(\nu+1)}$};
\end{scope}
\end{tikzpicture}
    \caption{A step-by-step construction of the mutation prenetwork $\widetilde{\cN}_\phi$.}
    \label{fig:construction_prenetwork}
\end{figure}

Let $\phi$ be a mutation loop as in \eqref{eq:mutation_seq}. 
We first extend it to be an infinite periodic sequence by setting $B^{(\nu+mL)}:=B^{(\nu)}$ for $\nu=0,\dots,L$ and $m \in \bZ$, and by periodically repeating the sequence of mutations and permutations. We only need the dynamics of exchange matrices in the following construction, and do not refer to the $y$-variables. 

The \emph{mutation pre-network} $\widetilde{\cN}_\phi$ associated with $\phi$ is the infinite bipartite graph with white/black vertices\footnote{We follow the original coloring convention in \cite{TY}.}, constructed as follows. See \cref{fig:construction_prenetwork}.
\begin{itemize}
    \item We begin with a disjoint collection $\widetilde{W}=\{w_i^{(\nu)}\}_{\nu \in \bZ,~i \in I}$ of white vertices.
    \item If the step $\bi^{(\nu)} \to \bi^{(\nu+1)}$ is a mutation at $k \in I$, then we create a black vertex $m_\nu$ connected with the white vertices $w_{k}^{(\nu)}$ and $w_{k}^{(\nu+1)}$ by a dashed line, respectively. 
    For each $i \neq k$, we connect $w_i^{(\nu)}$ with $w_i^{(\nu+1)}$ by a \textcolor{gray}{gray} line. We draw $|b_{ki}^{(\nu)}|$ arrows $m_\nu \to w_i^{(\nu)}$ if $b_{ki}^{(\nu)}\geq 0$, and $|b_{ki}^{(\nu)}|$ arrows $m_\nu \leftarrow w_i^{(\nu)}$ if $b_{ki}^{(\nu)}< 0$. 
    \item If the step $\bi^{(\nu)} \to \bi^{(\nu+1)}$ is a permutation by $\sigma \in \mathfrak{S}_I$, then we connect $w_{\sigma^{-1}(i)}^{(\nu)}$ with $w_i^{(\nu+1)}$ by a \textcolor{gray}{gray} line for each $i \in I$. 
\end{itemize}
Then we obtain an infinite (partially directed) graph $\widetilde{\cN}_\phi$ with vertex set $\widetilde{W}\cup \widetilde{B}$, where $\widetilde{B}$ denotes the set of the black vertices, together with three types of edges: dashed lines, quiver arrows, and gray lines. Note that it is $L$-periodic by construction. We call a maximal concatenation of gray lines a \emph{vertex orbit}. Let $(b_{m,w})_{(m,w) \in \widetilde{B}\times \widetilde{W}}$ denote the signed adjacency matrix counting the quiver arrows.

The \emph{mutation network} $\cN_\phi$ associated with $\phi$ is obtained from $\widetilde{\cN}_\phi$ by identifying $w_i^{(mL)}=w_i^{(0)}$ for all $i \in I$ and $m \in \bZ$, and contracting each vertex orbit into a point, identifying the white vertices along it. Let $W$ (resp. $B$) denote the set of white (resp. black) vertices in $\cN_\phi$, and $\pi:\widetilde{W} \cup \widetilde{B} \to W \cup B$ the projection of vertices.

\begin{dfn}\label{def:NZ}
Let $\cN_\phi$ be the mutation network associated with a mutation loop $\phi$. 
\begin{enumerate}
    \item Define $W \times B$ integral matrices $N^\ve$, $\ve=0,+,-$, called the \emph{adjacency matrices} by 
\begin{align*}
    N^0_{w,m} := \# \{ \text{dashed edges between $w$ and $m$} \}, \quad 
    N^\pm_{w,m}:= \sum_{\widetilde{w}\in \pi^{-1}(w)} [\pm b_{\widetilde{m},\widetilde{w}}]_+,
\end{align*}
where $\widetilde{m} \in \pi^{-1}(m)$. 
The integral matrices $A^+:=N^0-N^+$, $A^-:=N^0-N^-$, $A^0:=N^0$ are called the \emph{Neumann--Zagier matrices} of $\cN_\phi$. 
    \item The \emph{gluing variety} is the affine scheme
\begin{align*}
    \mathrm{Glue}_\phi:= \{\{(z_{m,+},z_{m,-})\}_{m \in B} \in (\bG_m\times \bG_m)^B \mid z_+ + z_- =1, ~ z_+^{-A^+}z_-^{A^-} =1 \}.
\end{align*}
Here, the defining equations (called the \emph{gluing equations}) are abbreviations of $z_{m,+}+z_{m,-} =1$ for all $m \in B$ and $\prod_{m \in B} z_{m,+}^{-A_{w,m}^+} z_{m,-}^{A_{w,m}^-} =1$ for all $w \in W$, respectively.
\end{enumerate}
\end{dfn}
The coordinate ring of the gluing variety is given by
\begin{align}\label{eq:gluing_variety_ring}
    \cO(\mathrm{Glue}_\phi) = \bQ[z_{m,+}^{\pm 1}, z_{m,-}^{\pm 1} \mid m \in B]/(z_+ + z_- =1, ~ z_+^{-A^+}z_-^{A^-} =1 ).
\end{align}
Let $\Fix_\phi^\ast \subset \Fix_\phi$ be the open subscheme defined by $y_{k_\nu}^{(\nu)}\neq -1$ for each mutation step $\mu_{k_\nu}: \bi^{(\nu)} \to \bi^{(\nu+1)}$ in $\phi$.

\begin{thm}[{\cite[Proposition 3.8]{Mizuno:Jac}}]\label{thm:fix_glue}
We have a morphism of schemes
\begin{align*}
    \varphi: \Fix_\phi^\ast \to \mathrm{Glue}_\phi, \quad \varphi^\ast (z_{m,+},z_{m,-}):= \bigg( \frac{y_m}{1+y_m}, \frac{1}{1+y_m} \bigg).
\end{align*}
If moreover $\phi$ is fully-mutating, then $\varphi$ is an isomorphism.
\end{thm}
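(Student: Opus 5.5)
We prove the two claims separately: that $\varphi$ is a well-defined morphism into $\mathrm{Glue}_\phi$, and that it is invertible when $\phi$ is fully-mutating. Throughout we work with coordinate rings.

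\textbf{Well-definedness.} On $\Fix_\phi^\ast$ every mutation step $\mu_{k_\nu}$ is defined at the corresponding point $\by^{(\nu)}$, since $y^{(\nu)}_{k_\nu}\neq -1$. So for each black vertex $m=m_\nu$ we may set $y_m:=y^{(\nu)}_{k_\nu}$, pulled back to $\Fix_\phi^\ast$ through the composite of the first $\nu$ steps. Then $y_m$ and $1+y_m$ are invertible, so $z_{m,\pm}$ are units, and $z_{m,+}+z_{m,-}=1$ holds identically.

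\textbf{Gluing equations.} We track each white vertex $w\in W$, which is a vertex orbit closed up by periodicity, along its gray lines.
\begin{itemize}
\item Each mutation $\mu_k$ with $i\neq k$ multiplies $y_i$ by $(1+y_k^{-\sgn b_{ki}})^{-b_{ki}}$. In terms of $z_\pm$ this factor is $z_-^{b_{ki}}$ when $b_{ki}\ge0$ and $z_+^{b_{ki}}$ when $b_{ki}<0$, since $1+y_k^{-1}=z_-^{-1}\cdot z_+^{-1}\cdot z_-$ unwinds to $z_+^{-1}$ appropriately.
\item At the endpoint of a dashed edge we have $y_k\mapsto y_k^{-1}$, where $y_k=z_+/z_-$.
\end{itemize}
The gluing equation at $w$ says that going once around the orbit returns to the same value. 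This holds because $\by$ is fixed by $\phi$, i.e.\ $\by^{(L)}=\by^{(0)}$ after the permutations. Taking the product of the telescoping ratios $y^{(\nu+1)}_{i'}/y^{(\nu)}_{i}$ along the orbit gives
\[
\prod_m z_{m,+}^{N^0_{w,m}-N^+_{w,m}}\, z_{m,-}^{-(N^0_{w,m}-N^-_{w,m})}=1,
\]
up to an overall inversion. This is exactly $z_+^{-A^+}z_-^{A^-}=1$ with the sign convention for $A^\pm$. The main bookkeeping point is how a dashed edge enters an orbit: the orbit ends at $w_k^{(\nu)}$ with value $y_k=z_+z_-^{-1}$ and restarts at $w_k^{(\nu+1)}$ with value $z_-z_+^{-1}$. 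So the jump across the black vertex contributes $N^0$ to both exponents. The whole verification is a local computation for a single mutation step.

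\textbf{Isomorphism in the fully-mutating case.} We construct the inverse. Fully-mutating means every vertex orbit contains at least one dashed endpoint, so every $y^{(\nu)}_i$ lies on an orbit segment that begins just after some mutation $m'$. Its value is then $z_{m',-}/z_{m',+}$ multiplied by the $z$-factors accumulated along the gray lines up to step $\nu$. In particular $y_i=y^{(0)}_i$ becomes a Laurent monomial in the $z$'s, which defines a ring map $\cO(\mathrm{Glue}_\phi)\to$ the function ring of $\Fix_\phi^\ast$ in the reverse direction. We must then check three things:
\begin{itemize}
\item the reconstructed $\by^{(\nu)}$ satisfy the mutation recursion, which follows from $z_++z_-=1$ via $1+z_+/z_-=1/z_-$;
\item periodicity holds, which is exactly the gluing equations;
\item $y_m=z_{m,+}/z_{m,-}$ recovers $z_{m,\pm}$.
\end{itemize}
Together these show the two compositions are the identity.

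The main obstacle is well-posedness of the reconstruction: a single orbit may contain several dashed edges, and we must show the resulting monomial expressions agree. Consistency between consecutive dashed edges on the same orbit is an identity forced by the recursion. The only global constraint is closing the loop around the orbit, which is the gluing equation at $w$; we handle this by choosing one starting dashed edge per orbit. We also need $y_m\neq-1$, which holds automatically since $z_{m,-}\neq0$.
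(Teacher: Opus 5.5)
Your overall strategy (shape parameters, telescoping along gray lines, an inverse by Laurent monomials) is the standard one behind Mizuno's Proposition 3.8. The paper cites that result rather than reproving it, and adds only a remark for never-mutated vertices. However, your bookkeeping is wrong in ways that matter.

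\textbf{The mutation factor is swapped.} Since $1+y_k^{-1}=z_{m,+}^{-1}$ and $1+y_k=z_{m,-}^{-1}$, one has $y_i'=y_i\,z_{m,+}^{[b_{ki}]_+}z_{m,-}^{-[-b_{ki}]_+}$. That is, the factor is $z_{m,+}^{b_{ki}}$ for $b_{ki}\ge 0$ and $z_{m,-}^{b_{ki}}$ for $b_{ki}<0$. With your assignment, your displayed equation would not follow.

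\textbf{The orbit picture is wrong.} A white vertex $w\in W$ is a single vertex orbit: a maximal gray path with no dashed edge in its interior. It closes up by periodicity only if the vertex is never mutated. In that case $N^0_{w,\bullet}=0$, and ``going around returns to the same value'' is exactly the paper's remark. In the fully-mutating case, $w$ is a finite path from $\widetilde{m}_+$ to $\widetilde{m}'_-$, where $m,m'$ are the mutations at its two ends. Its gluing equation comes from telescoping along this path alone:
\[
\frac{z_{m',+}}{z_{m',-}}=\frac{z_{m,-}}{z_{m,+}}\prod_{m''\in B}z_{m'',+}^{N^+_{w,m''}}z_{m'',-}^{-N^-_{w,m''}} .
\]
The fixed-point condition is used only to make the values on $\widetilde{\cN}_\phi$ periodic, so that this descends to $\cN_\phi$. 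The two dashed edges at a black vertex generally lie on \emph{different} white vertices. So ``ends at $w_k^{(\nu)}$ and restarts at $w_k^{(\nu+1)}$'' splices two different orbits. Telescoping around a label trajectory through several mutations gives only the \emph{product} of the gluing equations of the white vertices along it.

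\textbf{This becomes a genuine gap in the inverse.} In $\mathrm{Glue}_\phi$ each $m\in B$ carries its own coordinate $z_m$. The relation between consecutive mutations on a label trajectory is precisely the gluing equation of the white vertex between them. It is \emph{not} forced by the recursion, and propagating from one chosen dashed edge per trajectory would not recover the remaining $z_m$.

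The correct argument uses your first description of the inverse. Start each orbit at $z_{m,-}/z_{m,+}$ at its own past end and multiply the factors above. Periodicity is then automatic. What must be checked, for \emph{every} $w$, is that the value reached at the future end $\widetilde{m}'_-$ equals $z_{m',+}/z_{m',-}$, and this is exactly the displayed gluing equation. That single identity does three jobs. It makes the factors $(1+y_k^{-\sgn b_{ki}})^{-b_{ki}}$ at non-mutated vertices agree with the mutation rule. It makes the inversion $y_k\mapsto y_k^{-1}$ agree as well. It also gives $\varphi\circ\psi=\mathrm{id}$. The relation $z_++z_-=1$ alone does not suffice for any of these.

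So your list of checks attributes the gluing equations to the wrong step (periodicity). It never invokes them where they are actually needed: the recursion and $y_m=z_{m,+}/z_{m,-}$.
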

We call $\varphi^\ast z_{m,\pm}$, viewed as a function on $\Fix_\phi^\ast$, the \emph{shape parameters}. The isomorphism $\varphi$ provides a useful change of variables in the study of $\Fix_\phi^\ast$.

\begin{rem}
In the original proof of \cite[Proposition 3.8]{Mizuno:Jac}, the fully-mutating condition is assumed in the construction of the map $\varphi$. The construction is valid without the fully-mutating assumption, as follows. It suffices to consider a vertex $w \in W$ which is never mutated. It belongs to a closed gray orbit. By the mutation at $m \in B$, the variables on the neighboring white vertices are transformed as $y'_i=y_i z_{m,+}^{[b_{ki}]_+} z_{m,-}^{-[-b_{ki}]_+}$. The fixed point condition $\by^{(L)}=\by^{(0)}$ then implies that 
\begin{align*}
    \prod_{m \in B} z_{m,+}^{N_{w,m}^+} z_{m,-}^{-N_{w,m}^-} =1.
\end{align*}
Since $N_{w,m}^0=0$ for all $m \in B$, this is equivalent to the gluing equation
\begin{align*}
    \prod_{m \in B} z_{m,+}^{-A_{w,m}^+} z_{m,-}^{A_{w,m}^-} =1.
\end{align*}
\end{rem}

\begin{ex}[Type $A_2$]\label{ex:A2_network}
Consider the mutation loop $\phi$ in \cref{ex:A2_loop}. The quiver $Q=\tikz{\draw(0,0) circle(2pt)node[above,scale=0.8]{$1$};\draw(1,0) circle(2pt)node[above,scale=0.8]{$2$};\qarrow{1,0}{0,0};}$ corresponds to the initial seed $\bi$. 
The mutation network $\cN_{\phi}$ is obtained as follows:
\begin{align*}
\begin{tikzpicture}
\draw(0,0) circle(2pt) node[left=0.2em,scale=0.8]{$1$};
\draw(0,1.5) circle(2pt) node[left=0.2em,scale=0.8]{$2$};
\fill(1,0) circle(2pt) node[below]{$\mu_1$};
\draw(2,0) circle(2pt);
\draw[dashed,shorten >=2pt,shorten <=2pt] (0,0) -- (1,0);
\draw[dashed,shorten >=2pt,shorten <=2pt] (1,0) -- (2,0);
\qarrow{0,1.5}{1,0};
\draw(3,0) circle(2pt);
\draw(3,1.5) circle(2pt);
\draw[gray!60,very thick,shorten >=2pt,shorten <=2pt] (2,0) to[out=0,in=180] (3,1.5);
\draw[gray!60,very thick,shorten >=2pt,shorten <=2pt] (0,1.5) --++(2,0) to[out=0,in=180] (3,0);
\filldraw[fill=white] (2,1.5) circle(2pt);
\node at (1.5,-1) {$\widetilde{\cN}_\phi$};
\begin{scope}[xshift=6cm,yshift=0.75cm]
\draw(0,0) circle(2pt) node[left=0.2em,scale=0.8]{$1=2$};
\fill(1.5,0) circle(2pt) node[right=0.2em]{$\mu_1$};
\draw[dashed,shorten >=2pt,shorten <=2pt] (0,0) to[bend left=20] (1.5,0);
\draw[dashed,shorten >=2pt,shorten <=2pt] (0,0) to[bend right=20] (1.5,0);
\draw[qarrow,shorten >=2pt,shorten <=2pt] (0,0) to[bend left=55] (1.5,0);
\node at (0.75,-1.75) {$\cN_\phi$};
\end{scope}
\end{tikzpicture}
\end{align*}
Here, the vertices connected by \textcolor{gray!70}{gray} lines are identified in $\cN_\phi$. The adjacency and Neumann--Zagier matrices are computed as
\begin{align*}
    &N^0=(2),\quad N^+=(0), \quad N^-=(1), \\
    &A^+=(2), \quad A^-=(1).
\end{align*}
The gluing equations are $z_++z_-=1$ and $z_+^{-2}z_-=1$, which can be written as 
\begin{align*}
    z^{-1}=(1-z)^{2}
\end{align*}
with $z:=z_-$. By the change of variable $z=1/(1+y)$, this equation is equivalent to $1+y=y^2$, which is the defining equation of $R_\phi$. In this case, $R_\phi=R_\phi^\ast$ since $y$ cannot take the value $-1$. 
\end{ex}

\begin{ex}[Pseudo-Anosov on $\Sigma_1^1$]
Consider the initial seed with the exchange matrix $\ve=\begin{psmallmatrix}
    0 & -2 & 2 \\
    2 & 0 & -2 \\
    -2 & 2 & 0
\end{psmallmatrix}$, which corresponds to 
the Markov quiver
\begin{align*}
\begin{tikzpicture}
    \draw(0,0) circle(2pt) node[left]{\scriptsize $3$};
    \draw(0,2) circle(2pt) node[left]{\scriptsize $1$};
    \draw(1,1) circle(2pt) node[right]{\scriptsize $2$};
    \qqarrow{0,0}{1,1};
    \qqarrow{1,1}{0,2};
    \qqarrow{0,2}{0,0};
\end{tikzpicture}
\end{align*}
This is also the quiver associated with an ideal triangulation of the once-punctured torus $\Sigma_1^1$ \cite{FST}. Consider the mutation loop $\phi=(1\ 2\ 3)\mu_1 \mu_3$, which represents a pseudo-Anosov mapping class on $\Sigma_1^1$. The mutation network is obtained as follows:
\begin{align*}
\begin{tikzpicture}
    \foreach \i in {(0,1),(0,2),(2,0),(2,1)} \draw[gray!60,very thick,shorten >=2pt,shorten <=2pt] \i --++(2,0);
    \draw[gray!60,very thick,shorten >=2pt,shorten <=2pt] (4,2) to[out=0,in=180] (5,1);
    \draw[gray!60,very thick,shorten >=2pt,shorten <=2pt] (4,1) to[out=0,in=180] (5,0);
    \draw[gray!60,very thick,shorten >=2pt,shorten <=2pt] (4,0) to[out=0,in=180] (5,2);
    \draw[dashed,shorten >=2pt,shorten <=2pt] (0,0) -- (2,0);
    \draw[dashed,shorten >=2pt,shorten <=2pt] (2,2) -- (4,2);
    \draw(0,0) circle(2pt) node[left]{\scriptsize $3$};
    \draw(0,2) circle(2pt) node[left]{\scriptsize $1$};
    \draw(0,1) circle(2pt) node[left]{\scriptsize $2$};
    \fill(1,0) circle(2pt) node[below]{$\mu_3$};
    \foreach \i in {0,1,2} \draw(2,\i) circle(2pt);
    \fill(3,2) circle(2pt) node[above]{$\mu_1$};
    \foreach \i in {0,1,2} \draw(4,\i) circle(2pt);
    \foreach \i in {0,1,2} \draw(5,\i) circle(2pt);
    \qqarrow{0,2}{1,0};
    \qqarrow{1,0}{0,1};
    \qqarrow{3,2}{2,1};
    \qqarrow{2,0}{3,2};
    \node at (2.5,-1) {$\widetilde{\cN}_\phi$};
    \begin{scope}[xshift=7cm]
    \draw(0,1) circle(2pt);
    \draw(2,1) circle(2pt);
    \fill(1,2) circle(2pt) node[above]{{$\mu_1$}};
    \fill(1,0) circle(2pt) node[below]{{$\mu_3$}};
    \qqarrow{0,1}{1,0};
    \qqarrow{1,0}{2,1};
    \qqarrow{2,1}{1,2};
    \qqarrow{1,2}{0,1};
    \draw[dashed,shorten >=2pt,shorten <=2pt] (0,1) to[bend left=20] (1,0);
    \draw[dashed,shorten >=2pt,shorten <=2pt] (1,0) to[bend left=20] (2,1);
    \draw[dashed,shorten >=2pt,shorten <=2pt] (2,1) to[bend left=20] (1,2);
    \draw[dashed,shorten >=2pt,shorten <=2pt] (1,2) to[bend left=20] (0,1);
    \node at (1,-1) {$\cN_\phi$};
    \end{scope}
\end{tikzpicture}
\end{align*}
The adjacency and Neumann--Zagier matrices are computed as
\begin{align*}
    &N^0=\begin{pmatrix}
        1 & 1 \\ 1 & 1
    \end{pmatrix}, \quad 
    N^+=\begin{pmatrix}
        0 & 0 \\ 2 & 2
    \end{pmatrix}, \quad 
    N^-=\begin{pmatrix}
        2 & 2 \\ 0 & 0
    \end{pmatrix}, \\
    &A^+=\begin{pmatrix}
        1 & 1 \\ -1 & -1
    \end{pmatrix}, \quad 
    A^-=\begin{pmatrix}
        -1 & -1 \\ 1 & 1
    \end{pmatrix}.
\end{align*}
In this case, $\dim \mathrm{Glue}_\phi=1$. Therefore $\dim \Fix_\phi=1$ by \cref{thm:fix_glue}, and hence $\phi$ is degenerate in this case. In the original conjecture of Gang--Kim--Yoon \cite{GKY}, this degeneracy is compensated by imposing the additional transverse condition fixing the trace of the monodromy along the meridian.
\end{ex}

\subsection{Refined Neumann--Zagier matrices}\label{subsec:graded_NZ}
Mizuno defined a refined version of Neumann--Zagier matrices \cite{Mizuno:thesis}, including an extra parameter $z$ encoding the ``time difference'' as a grading. We will use the symbol $t$ instead of $z$. 

We say that $\phi$ is \emph{fully-mutating} if each vertex orbit is a finite path. 
In this case, the vertex orbit of each $\widetilde{w} \in \widetilde{W}$ has a unique future endpoint $\partial_+ \widetilde{w} \in \widetilde{W}$, namely the endpoint in the positive direction in $\bZ$.

In the step-by-step construction of the mutation pre-network $\widetilde{\cN}_\phi$, we set the length of each gray segment in each mutation step (the left of \cref{fig:construction_prenetwork}) to be $1$, 
and in each permutation step (the right of \cref{fig:construction_prenetwork}) to be $0$. For each $\widetilde{w} \in \widetilde{W}$, let $\ell_+(\widetilde{w}) \in \bZ_{\geq 0}$ denote the length in this sense of the interval between $\widetilde{w}$ and $\partial_+ \widetilde{w}$ in its vertex orbit. In \cite{Mizuno:thesis}, $\ell_+(\widetilde{w})$ is called the \emph{latency}. 

For each $\widetilde{m}\in \widetilde{B}$, let $\widetilde{m}_-, \widetilde{m}_+ \in \widetilde{W}$ denote the past and the future vertex connected to $\widetilde{m}$ by a dashed line, respectively. If $\widetilde{m}=m_\nu$, then $\widetilde{m}_-=w_k^{(\nu)}$ and $\widetilde{m}_+=w_k^{(\nu+1)}$.

For $(w,m) \in W \times B$, define \cite[(2.16), (2.17)]{Mizuno:thesis}
\begin{align*} 
    N^0(t)_{w,m}:=&\delta_{\pi(\widetilde{m}_-),w} + \delta_{\pi(\widetilde{m}_+),w}t^{1+\ell_+(\widetilde{m}_+)}, \\
    N^\pm(t)_{w,m}:=&\sum_{\widetilde{w} \in \pi^{-1}(w)}[\pm b_{\widetilde{m},\widetilde{w}}]_+ t^{\ell_+(\widetilde{w})},
\end{align*}
where $\widetilde{m} \in \pi^{-1}(m)$. They do not depend on the choice of $\widetilde{m}$. The matrices $A^\pm(t):=N^0(t)-N^\pm(t)$, $A^0(t):=N^0(t) \in \mathrm{Mat}_{W\times B}(\bZ[t])$ are called the \emph{graded Neumann--Zagier matrices} of $\cN_\phi$. We have a remarkable symmetry
\begin{align*}
    A^+(t) A^-(t)^\dagger = A^-(t) A^+(t)^\dagger
\end{align*}
called the \emph{symplectic relation} \cite[(3.2)]{Mizuno:thesis}, where $M(t)^\dagger := M(t^{-1})^\top$ for $M(t) \in \mathrm{Mat}_{m,n}(\bZ[t])$.

\begin{ex}[Type $A_2$]
We continue \cref{ex:A2_network}. From the picture for $\widetilde{\cN}_\phi$, we read off
\begin{align*}
    N^0(t)=(1+t^2), \quad N^+(t)=(0), \quad N^-(t)=(t),
\end{align*}
and hence $A^+(t)=1+t^2$, $A^-(t)=1-t+t^2$. The product $A^+(t)A^-(t)^\dagger = t^2-t+2-t^{-1}+t^{-2}$ is $\dagger$-symmetric. 
\end{ex}

\section{Index for a mutation loop and its relatives}\label{sec:index}

\subsection{Definition of the index}
Let $\phi$ be a mutation loop. 
For each closed point $[x] \in \Fix_\phi$, consider the tangent map
\begin{align}
    (d\phi)_{[x]}: T_{[x]} (\bG_m)^I \to T_{[x]} (\bG_m)^I,
\end{align}
which is a linear map over the residue field $\bQ([x])$. 

\begin{dfn}\label{def:non-deg}
We say that $\phi$ is \emph{non-degenerate} if $\det(1-(d\phi)_{[x]}) \neq 0$ for all closed points $[x] \in \Fix_\phi$. 
\end{dfn}
Consider the $\overline{\bQ}$-points of the fixed point locus:
\begin{align}
    \Fix_\phi(\overline{\bQ})= \{ x \in U_\phi(\overline{\bQ}) \mid \phi(x)=x\},
\end{align}
For each $x \in \Fix_\phi(\overline{\bQ})$, the tangent map 
\begin{align*}
    (d\phi)_x: T_x (\overline{\bQ} \setminus\{0\})^I \to T_x (\overline{\bQ} \setminus\{0\})^I
\end{align*}
is a $\overline{\bQ}$-linear map obtained from the tangent map $(d\phi)_{[x]}$ at the corresponding closed point $[x] \in \Fix_\phi$ by the base change along $\bQ([x]) \to \overline{\bQ}$. Therefore the non-degeneracy is equivalent to the condition that $\det(1-(d\phi)_x) \neq 0$ for all $x \in \Fix_\phi(\overline{\bQ})$. In other words, $(d\phi)_x$ does not have eigenvalue $1$ over $\overline{\bQ}$. 

\begin{dfn}[Index]\label{def:index}
For a non-degenerate mutation loop $\phi$ and any integer $g \geq 0$, 
define 
\begin{align*}
    \Ind_g^\phi:= \sum_{x \in \Fix_\phi(\overline{\bQ})} \det(1-(d\phi)_x)^{g-1} \in \overline{\bQ}.
\end{align*}
We call $\Ind_g^\phi$ the \emph{index} for the mutation loop $\phi$. 
\end{dfn}

\begin{rem}\label{rem:torsion}
Let $\phi$ be any mutation loop. For $x \in \Fix_\phi(\overline{\bQ})$, let $\nu(x) \geq 0$ denote the multiplicity of the eigenvalue $1$ of $(d\phi)_x$. 
In view of Kitayama--Terashima \cite[Corollary 3.11 and Theorem 4.3]{KT}, the value
\begin{align*}
    \left.\frac{\det(t-(d\phi)_x)}{(t-1)^{\nu(x)}}\right|_{t=1} \in \overline{\bQ}
\end{align*}
is a cluster-theoretic analogue of the non-acyclic adjoint Reidemeister torsion at $x$. If $\phi$ is non-degenerate, then $\nu(x)=0$ at every $x \in \Fix_\phi(\overline{\bQ})$, and hence $\Ind_g^\phi$ can be viewed as the power sum of the torsions. 
\end{rem}

\begin{ex}\label{ex:ind_1}
The index for $g=1$ just counts the fixed points:
\begin{align*}
    \Ind_1^\phi=\sum_{x \in \Fix_\phi(\overline{\bQ})} 1 = |\Fix_\phi(\overline{\bQ})|,
\end{align*}
hence an integer.
\end{ex}

\begin{ex}[Type $A_2$]\label{ex:A2_index}
Consider the mutation loop $\phi$ as in \cref{ex:A2_loop}. Then 
\begin{align*}
    \Fix_\phi(\overline{\bQ}) = \left\{ \mtx{y_1 \\y_2}=\mtx{y \\ y^{-1}} ~\middle|~ y^2-y-1=0 \right\}
\end{align*}
consists of two points, corresponding to $y=(1\pm \sqrt 5)/2$. In particular, all the fixed points are defined over the number field $\bQ(\sqrt 5)$.
The tangent map is directly computed from \eqref{eq:A2_rational} as
\begin{align*}
    (d\phi)_x = \mtx{y_2 & 1+y_1 \\ -y_1^{-2} & 0} = \mtx{y^{-1} & y^2 \\ -y^{-2} & 0}.
\end{align*}
Then
\begin{align*}
    \det(1-(d\phi)_x) = \det\mtx{1-y^{-1} & -y^2 \\ y^{-2} & 1} = 2-y^{-1} = \frac{5 \mp \sqrt 5}{2}.
\end{align*}
Hence we get
\begin{align*}
    \Ind_g^\phi = \bigg(\frac{5 + \sqrt 5}{2}\bigg)^{g-1} + \bigg(\frac{5 - \sqrt 5}{2}\bigg)^{g-1}.
\end{align*}
In this case, $a:=(5+\sqrt 5)/2$ and $b:=(5-\sqrt 5)/2$ are two solutions of $\lambda^2-5\lambda+5=0$. We claim that the recurrence
\begin{align*}
    \Ind_g^\phi - 5 \Ind_{g-1}^\phi + 5\Ind_{g-2}^\phi=0
\end{align*}
holds. Indeed, $a^2-5a+5=0$ implies $a^n-5a^{n-1}+5a^{n-2}=0$. Similarly we get $b^n-5b^{n-1}+5b^{n-2}=0$, and the term-wise sum gives the claimed recurrence. This mechanism is well-known as Newton's identities. 
From this recurrence relation, we obtain the integer sequence
\begin{align*}
    (\Ind_g^\phi)_{g \geq 0} = 1,\ 2,\ 5,\ 15,\ 50,\ 175,\ 625,\dots.
\end{align*}
\end{ex}

\subsubsection{Refined index}

\begin{dfn}[Refined index]\label{def:refined_index}
For a non-degenerate mutation loop $\phi$ and an integer $g \geq 0$, define
\begin{align*}
    \Ind_{g,t}^\phi:= \sum_{x \in \Fix_\phi(\overline{\bQ})} \det(t-(d\phi)_x)^{g-1} \in \overline{\bQ}(t).
\end{align*}
We call $\Ind_{g,t}^\phi$ the \emph{refined index} for the mutation loop $\phi$.
\end{dfn}
We have $\Ind_{g,1}^\phi=\Ind_g^\phi$.

\begin{ex}[Type $A_2$]\label{ex:A2_refined}
By a simple modification of the computations in \cref{ex:A2_index}, we get
\begin{align*}
    \Ind_{g,t}^\phi = a(t)^{g-1} + b(t)^{g-1}
\end{align*}
with $a(t):=t^2 - \frac{-1-\sqrt 5}{2}t + 1$ and $b(t):=t^2 - \frac{-1+\sqrt 5}{2}t + 1$. Since $a(t)+b(t)=2t^2 +t+2$ and $a(t)b(t)=t^4+t^3+t^2+t+1$, we get a recursion relation 
\begin{align*}
    \Ind_{g,t}^\phi - (2t^2+t+2)\Ind_{g-1,t}^\phi + (t^4+t^3+t^2+t+1)\Ind_{g-2,t}^\phi =0. 
\end{align*}
We then obtain the sequence of polynomials (a rational function only for $g=0$) with integer coefficients as in \cref{tab:A2_poly}. 
\begin{table}[ht]
    \centering
    \begin{tabular}{c|l}
       $g$  & $\Ind_{g,t}^\phi$  \\ \hline
       $0$  & $(2t^2+t+2)/(t^4+t^3+t^2+t+1)$\\
       $1$  & $2$\\
       $2$  & $2+t+2t^2$\\
       $3$  & $2 + 2t + 7t^2 + 2t^3 + 2t^4$\\
       $4$  & $2 + 3 t + 15 t^2 + 10 t^3 + 15 t^4 + 3 t^5 + 2 t^6$\\
       $5$  & $2 + 4 t + 26 t^2 + 28 t^3 + 55 t^4 + 28 t^5 + 26 t^6 + 4 t^7 + 2 t^8$\\
       $6$  & $2 + 5 t + 40 t^2 + 60 t^3 + 145 t^4 + 121 t^5 + 145 t^6 + 60 t^7 + 
 40 t^8 + 5 t^9 + 2 t^{10}$ \vspace{2mm}
    \end{tabular}
    \caption{Refined indices for the mutation loop of type $A_2$.}
    \label{tab:A2_poly}
\end{table}
\end{ex}

\subsubsection{Generating function of the indices}

\begin{dfn}[Generating function]
For a non-degenerate mutation loop $\phi$, define
\begin{align*}
    Z_\phi(s):= \sum_{g \geq 2} (-1)^g\Ind_g^\phi\cdot s^{g-2} \in \overline{\bQ}[[s]].
\end{align*}
We call $Z_\phi(s)$ the \emph{generating function} of the indices.
\end{dfn}

\begin{lem}
For each $x \in \Fix_\phi(\overline{\bQ})$, let $b_x:=\det(1-(d\phi)_x)$ and consider $E_\phi(s):=\prod_{x \in \Fix_\phi(\overline{\bQ})} (1+b_x s)$. Then we have
\begin{align*}
    Z_\phi(s) =  \frac{E'_\phi(s)}{E_\phi(s)}.
\end{align*}
\end{lem}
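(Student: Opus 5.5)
The plan is to take the logarithmic derivative of the finite product $E_\phi(s)$ and expand each factor as a geometric series in $\overline{\bQ}[[s]]$.

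First I check that $E_\phi(s)$ is a genuine polynomial, which needs $\Fix_\phi(\overline{\bQ})$ to be finite. Non-degeneracy (\cref{def:non-deg}) says $\det(1-(d\phi)_x)\neq 0$ at every $x$. So the tangent space of $\Fix_\phi$ at every point vanishes, since it is the kernel of $1-(d\phi)_x$. Hence $\Fix_\phi$ is a scheme of finite type over $\bQ$ with zero-dimensional (indeed reduced, étale) points. It is therefore zero-dimensional and of finite type, so its set of $\overline{\bQ}$-points is finite. This is the only step needing more than algebra, and it is already implicit in \cref{def:index}, where $\Ind_g^\phi$ is taken as a finite sum. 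Consequently $E_\phi(s)\in\overline{\bQ}[s]$ has constant term $1$, so it is a unit in $\overline{\bQ}[[s]]$ and $E'_\phi/E_\phi$ makes sense there.

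Next, by the product rule, or equivalently logarithmic differentiation in $\overline{\bQ}[[s]]$ using that each factor $1+b_xs$ is a unit,
\begin{align*}
\frac{E'_\phi(s)}{E_\phi(s)}=\sum_{x\in\Fix_\phi(\overline{\bQ})}\frac{b_x}{1+b_x s}.
\end{align*}
Expanding each summand as a geometric series gives
\begin{align*}
\frac{b_x}{1+b_xs}=\sum_{n\geq 0}(-1)^n b_x^{n+1}s^n .
\end{align*}
Summing over the finitely many $x$ and reindexing by $g=n+2$ (so $n=g-2$ and $(-1)^n=(-1)^g$), the coefficient of $s^{g-2}$ is $(-1)^g\sum_x b_x^{g-1}=(-1)^g\Ind_g^\phi$ by \cref{def:index}. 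This is exactly $Z_\phi(s)$.

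There is no real obstacle here. The only point needing care is the finiteness of $\Fix_\phi(\overline{\bQ})$, which justifies interchanging the finite sum with the power series expansion. Everything else is a formal identity.
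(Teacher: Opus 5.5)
Your proposal is correct and matches the paper's proof: Leibniz-rule logarithmic differentiation of $E_\phi$, geometric-series expansion of each $b_x/(1+b_xs)$, and reindexing with $n=g-2$. The finiteness of $\Fix_\phi(\overline{\bQ})$, which you check separately, is exactly what \cref{lem:fixed_point_Artinian} establishes and what \cref{def:index} already presupposes, so nothing is missing.
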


\begin{proof}
By applying the Leibniz rule and expanding the geometric series, we get
\begin{align*}
    \frac{E'_\phi(s)}{E_\phi(s)} &= \sum_{x \in \Fix_\phi(\overline{\bQ})} \frac{b_x}{1+b_x s} \\
    &= \sum_{x \in \Fix_\phi(\overline{\bQ})} b_x(1-b_x s + b_x^2 s^2 - b_x^3 s^3 +\cdots) \\
    &= \Ind_2^\phi - \Ind_3^\phi s + \Ind_4^\phi s^2 - \Ind_5^\phi s^3 +\cdots = Z_\phi(s).
\end{align*}
\end{proof}

\begin{ex}[Type $A_2$]\label{ex:A2_generating}
From the computation in \cref{ex:A2_index}, we get 
\begin{align*}
    E_\phi(s)=(1+as)(1+bs) = 5s^2 +5s + 1, \quad Z_\phi(s) = \frac{10s+5}{5s^2 +5s +1}.
\end{align*}
\end{ex}



\section{Integrality of the index}\label{sec:integrality}

\subsection{The index as a trace}\label{subsec:trace}
Recall the $\bQ$-algebra $R_\phi=\cO(\Fix_\phi)$ of regular functions on $\Fix_\phi$. We will henceforth fix a non-degenerate mutation loop $\phi$, and simply write $R:=R_\phi$ during this subsection.

\begin{conv}
A point $x \in \Fix_\phi(\overline{\bQ})$ determines a homomorphism $x^\# \in \Hom_\bQ(R,\overline{\bQ})$, which determines a maximal ideal $\mathfrak{m}_x:=\ker x^\# \subset R$ and a homomorphism $\sigma_x \in \Hom_\bQ(R/{\mathfrak{m}_x},\overline{\bQ})$. The maximal ideal $\mathfrak{m}_x$ corresponds to a closed point $[x] \in \Fix_\phi$. We may regard $x=(\mathfrak{m}_x,\sigma_x)$ or $x=([x],\sigma_x)$.
\end{conv}

\begin{lem}\label{lem:fixed_point_Artinian}
For a closed point $[x] \in \Fix_\phi$, assume $\det(1-d\phi_{[x]}) \neq 0$. 
Then the local ring $R_{[x]}=R_{\mathfrak{m}_x}$
is a field. 
If this is the case for any closed point $[x] \in \Fix_\phi$, namely $\phi$ is non-degenerate, then $R$ is a reduced Artinian ring. In particular, $\dim \Fix_\phi=0$ and $\Fix_\phi$ has only finitely many closed points.
\end{lem}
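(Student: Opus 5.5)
The key point is that $\Fix_\phi$ is locally cut out as the zero locus of the map $\Phi:=\mathrm{id}-\phi$, written in suitable coordinates. The tangent map of $\Phi$ is $1-(d\phi)$, so non-degeneracy says $\Fix_\phi$ is scheme-theoretically a transversal intersection at $[x]$. That makes the point étale over $\bQ$, hence its local ring is a field.

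\emph{Local equations.} On the open subscheme $U_\phi$, which is smooth of dimension $n:=|I|$ over $\bQ$, use the coordinates $y_i$. The regular functions $f_i:=y_i-\phi^\ast y_i$ ($i\in I$) lie in $\cO(U_\phi)$, and $\Fix_\phi$ is the closed subscheme $V(f_i\mid i\in I)\subset U_\phi$. This is just a restatement of the definition as an equalizer, since the diagonal of $(\bG_m)^I$ is cut out by $y_i\otimes 1-1\otimes y_i$. Hence
\[
R_{[x]}=\cO_{U_\phi,[x]}/(f_i)_{i\in I}.
\]
Here $\cO_{U_\phi,[x]}$ is a regular local ring of dimension $n$, because $[x]$ is a closed point of a smooth $n$-dimensional $\bQ$-variety.

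\emph{Cotangent space computation.} Write $\mathfrak{m}=\mathfrak{m}_{[x]}\subset\cO_{U_\phi,[x]}$ and $k=\bQ([x])$. Since $\bQ$ has characteristic $0$, $k/\bQ$ is separable, and we have $\Omega_{U_\phi/\bQ}\otimes k\cong \mathfrak{m}/\mathfrak{m}^2$ via $d$. The images of the $f_i$ in $\Omega\otimes k\cong\bigoplus_i k\,dy_i$ are the rows of the matrix $1-J(\phi)_{[x]}$, i.e. of $1-(d\phi)_{[x]}$ in the basis $dy_i$, up to transpose. Since this matrix is invertible by assumption, the classes of $f_1,\dots,f_n$ span $\mathfrak{m}/\mathfrak{m}^2$.

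Nakayama's lemma then gives $(f_i)=\mathfrak{m}$ in $\cO_{U_\phi,[x]}$, so $R_{[x]}=\cO_{U_\phi,[x]}/\mathfrak{m}=k$ is a field.

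The one point needing care, and the main obstacle, is the identification $\mathfrak{m}/\mathfrak{m}^2\cong\Omega\otimes k$ at a non-rational closed point. I would handle it with the conormal sequence for $\Spec k\to U_\phi$: the map $\mathfrak{m}/\mathfrak{m}^2\to\Omega_{U_\phi}\otimes k$ is injective with cokernel $\Omega_{k/\bQ}=0$ because $k/\bQ$ is separable. Alternatively, base change to $\overline{\bQ}$ and argue at a rational point $x$ using the Jacobian criterion, as the text before \cref{def:index} does. A tidy version of the second route: $\Fix_\phi\times_\bQ\overline{\bQ}$ at $x$ is cut out by $n$ equations whose Jacobian is invertible, so it is étale over $\overline{\bQ}$ there. Étaleness descends, so $\Fix_\phi\to\Spec\bQ$ is étale at $[x]$, and hence $R_{[x]}$ is a finite separable field extension of $\bQ$.

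\emph{Global statement.} If every closed point has $R_{[x]}$ a field, then every local ring at a maximal ideal is zero-dimensional. Since $R$ is a finitely generated $\bQ$-algebra, it is a Jacobson ring, so every prime ideal is an intersection of maximal ideals. If some prime $\mathfrak{p}$ were non-maximal, then for a maximal $\mathfrak{m}\supsetneq\mathfrak{p}$ the local ring $R_\mathfrak{m}$ would have dimension at least $1$, a contradiction. Hence $\dim R=0$, and a Noetherian zero-dimensional ring is Artinian. An Artinian ring is the product of its localizations at its finitely many maximal ideals, each of which is a field here, so $R$ is reduced and a finite product of number fields. In particular $\Fix_\phi$ has finitely many closed points and $\dim\Fix_\phi=0$.
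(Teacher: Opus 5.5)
Your proposal is correct and follows essentially the same route as the paper: invertibility of $1-(d\phi)_{[x]}$ kills the cotangent space at $[x]$, Nakayama's lemma then makes $R_{[x]}$ a field, and a zero-dimensional Noetherian ring is Artinian and splits as the product of its localizations. The differences are minor. You apply Nakayama in the ambient regular local ring to get $(f_i)=\mathfrak{m}$, whereas the paper shows $\mathfrak{m}_{R_{[x]}}=\mathfrak{m}_{R_{[x]}}^2$ in the quotient. You also make explicit the separability of $\bQ([x])/\bQ$ needed to identify $\mathfrak{m}/\mathfrak{m}^2$ with $\Omega\otimes k$ at non-rational closed points, which the paper uses implicitly when it writes $T_{[x]}\Fix_\phi\cong\ker J_{[x]}$. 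Finally, the Jacobson property in your global step is more than you need, since any non-maximal prime already lies strictly inside some maximal ideal.
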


\begin{proof}
Consider $[x] \in \Fix_\phi \subset (\bG_m)^I$ such that $\det(1-d\phi_{[x]}) \neq 0$ holds. 
By taking an open neighborhood $U$ of $[x]$ if necessary, the fixed point condition $\phi([x])=[x]$ is presented by algebraic equations
\begin{align*}
    y_1 = f_1(y_1,\dots,y_n),\quad \dots, \quad 
    y_n = f_n(y_1,\dots,y_n)
\end{align*}
for some regular functions $f_1,\dots,f_n: U \to \bA^{\!1}_\bQ$. 
Then the Zariski tangent space is identified with 
\begin{align}
    T_{[x]} \Fix_\phi \cong\ker J_{[x]},
\end{align}
where $J_{[x]}:=(\delta_{ij}-\frac{\partial f_i}{\partial y_j}([x]))_{i,j}$ represents the tangent map $1-d\phi_{[x]}$. Since $\det(1-d\phi_{[x]}) \neq 0$ by assumption, we have $T_{[x]} \Fix_\phi=0$.

On the other hand, we have $T_{[x]} \Fix_\phi=(\mathfrak{m}_{R_{[x]}}/\mathfrak{m}_{R_{[x]}}^2)^\ast$, where $\mathfrak{m}_{R_{[x]}}$ is the maximal ideal of $R_{[x]}$. The above argument shows $\mathfrak{m}_{R_{[x]}}=\mathfrak{m}_{R_{[x]}}^2$, and hence $\mathfrak{m}_{R_{[x]}}=0$ by Nakayama's lemma for the $R_{[x]}$-module $\mathfrak{m}_{R_{[x]}}$. Thus $R_{[x]}$ is a field. 

If $\det(1-d\phi_{[x]})\neq 0$ holds for any $[x] \in \Fix_\phi$, then the above argument shows that $R_{[x]}$ is a field for any closed point $[x]$. 
This implies that any prime ideal of $R$ is maximal, and hence $\dim R=0$. Since $R$ is Noetherian, this implies that $R$ is Artinian \cite[Theorem 8.5]{AtiMac}. 
Then $R$ has only finitely many maximal ideals \cite[Proposition 8.3]{AtiMac}, and we have 
\begin{align}\label{eq:Artin_canonical}
    R \cong \prod_{[x] \in \Fix_\phi} R_{[x]}
\end{align}
by the canonical decomposition of an Artinian ring \cite[Theorem 8.7]{AtiMac}. Since each $R_{[x]}$ is a field, we see that $R$ is reduced.
\end{proof}

In particular, $R$ is isomorphic to the finite product of fields $R_{[x]}$ for closed points $[x] \in \Fix_\phi$. Since $R$ is a finitely generated $\bQ$-algebra, each $R_{[x]}/\bQ$ is a finite extension by \cite[Proposition 7.9]{AtiMac}. 

\begin{rem}
The non-degeneracy of $\phi$ is equivalent to the condition (5) in \cite[Tag 02GU]{Stack} with the structure morphism $f: \Fix_\phi \to \Spec \bQ$. Since $R_\phi$ is a finitely generated $\bQ$-algebra, it is reduced Artinian if and only if $\Fix_\phi$ is finite \'etale over $\bQ$.
See \cite[Tag 02GU]{Stack} for related characterizations.
\end{rem}

\begin{rem}\label{rem:complex_fixed_point}
A complex fixed point of $\phi$ corresponds to a $\bQ$-algebra homomorphism $z: R \to \bC$. Since $R$ is finite-dimensional over $\bQ$, each element is algebraic. In particular, $z$ has coordinates algebraic over $\bQ$. Therefore, every complex fixed point of a non-degenerate mutation loop $\phi$ is a $\overline{\bQ}$-point.
\end{rem}

Recall that for a finite extension $L/K$ of fields, we have the trace defined by
\begin{align*}
    \Tr_{L/K}(x) := \Tr(m_x)
\end{align*}
for $x \in L$, where $m_x: L \to L$, $y \mapsto xy$ is the multiplication operator, which is viewed as a $K$-linear map. For a product $L=L_1\times\dots \times L_r$ of finite extensions $L_j/K$, define $\Tr_{L/K}(x):=\sum_{j=1}^r \Tr_{L_j/K}(x)$.

The function $x\mapsto \det(1-d\phi_x)$
defines a regular function on $\Fix_\phi$,
which we denote by 
\begin{align*}
    T:=\det(1-d\phi_{\bullet}) \in R=\cO(\Fix_\phi).
\end{align*}

\begin{prop}\label{prop:index_trace}
For any $g \geq 0$, we have
\begin{align}\label{eq:index_trace}
    \Ind_g^\phi = \Tr_{R/\bQ} (T^{g-1}).
\end{align}
\end{prop}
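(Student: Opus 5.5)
The proof reduces the trace to a sum over embeddings, using the decomposition \eqref{eq:Artin_canonical} from \cref{lem:fixed_point_Artinian}. Since $\phi$ is non-degenerate, $R\cong\prod_{[x]\in\Fix_\phi} R_{[x]}$ with each $R_{[x]}$ a finite extension field of $\bQ$. By the definition of $\Tr_{R/\bQ}$ on a product, $\Tr_{R/\bQ}(f)=\sum_{[x]}\Tr_{R_{[x]}/\bQ}(f_{[x]})$, where $f_{[x]}$ is the image of $f$ in $R_{[x]}$.

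\textbf{Step 1: the trace as a sum over embeddings.} For a finite separable extension $L/\bQ$ (automatic in characteristic $0$), the standard fact is $\Tr_{L/\bQ}(a)=\sum_{\sigma\in\Hom_\bQ(L,\overline{\bQ})}\sigma(a)$. Applying this to each factor gives
\begin{align*}
    \Tr_{R/\bQ}(f)=\sum_{[x]\in\Fix_\phi}\ \sum_{\sigma\in\Hom_\bQ(R_{[x]},\overline{\bQ})}\sigma(f_{[x]}).
\end{align*}

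\textbf{Step 2: matching pairs with $\overline{\bQ}$-points.} By the Notation preceding \cref{lem:fixed_point_Artinian}, the pairs $([x],\sigma)$ correspond bijectively to $\bQ$-algebra maps $R\to\overline{\bQ}$, namely to points $x\in\Fix_\phi(\overline{\bQ})$. Here we use that $R/\mathfrak{m}_x=R_{[x]}$ because $R_{[x]}$ is a field. Under this correspondence $\sigma(f_{[x]})=x^\#(f)$ is simply the value $f(x)$. Therefore $\Tr_{R/\bQ}(f)=\sum_{x\in\Fix_\phi(\overline{\bQ})}f(x)$.

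\textbf{Step 3: evaluating $T$.} The function $T=\det(1-d\phi_\bullet)$ is regular on $\Fix_\phi$, since the entries of $d\phi$ are regular on $U_\phi$. Its value at $x$ is $\det(1-(d\phi)_x)$; this holds because the tangent map at $x$ is the base change of $(d\phi)_{[x]}$, as explained before \cref{def:index}. By non-degeneracy $T(x)\neq0$ at every point, so $T$ is a unit in $R$ and $T^{g-1}$ makes sense even for $g=0$. Since evaluation is a ring homomorphism, $T^{g-1}(x)=T(x)^{g-1}$. Taking $f=T^{g-1}$ in Step 2 then yields \eqref{eq:index_trace}.

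\textbf{Main obstacle.} The only genuine subtlety is justifying that $T$ is a well-defined element of $R$ and is invertible there. Invertibility holds because a function on a finite product of fields is a unit exactly when it is nonzero in each factor, and non-degeneracy gives this. The rest is the standard identity of trace with the sum of conjugates.
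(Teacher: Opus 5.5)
Your proposal is correct and follows essentially the same route as the paper. You decompose $R\cong\prod_{[x]}R_{[x]}$, write each field trace as the sum of its $\overline{\bQ}$-conjugates, and identify the pairs $([x],\sigma)$ with points of $\Fix_\phi(\overline{\bQ})$. The paper obtains the conjugate sum via $R_{[x]}\otimes_\bQ\overline{\bQ}\cong\overline{\bQ}^{\Hom_\bQ(R_{[x]},\overline{\bQ})}$ and base-change invariance of the trace. Your explicit check that $T$ is a unit in $R$, which is needed for $T^{-1}$ to make sense when $g=0$, is a point the paper leaves implicit.
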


\begin{proof}
The right-hand side of \eqref{eq:index_trace} is computed as
\begin{align}\label{eq:trace_decomp}
    \Tr_{R/\bQ} (T^{g-1}) = \sum_{[x] \in \Fix_\phi}\Tr_{R_{[x]}/\bQ} (T^{g-1})
\end{align}
by the canonical decomposition of Artinian rings.
Since $\bQ$ has characteristic $0$, each extension $R_p/\bQ$ is separable. It is well-known that for such a finite separable extension, the injective homomorphism $R_{[x]} \to \overline{\bQ}^{\Hom_\bQ(R_{[x]},\overline{\bQ})}$, $a \mapsto (\sigma(a))_{\sigma}$ extends to an isomorphism
\begin{align}\label{eq:isom_diagonalization}
    R_{[x]} \otimes_\bQ \overline{\bQ} \xrightarrow{\sim} \overline{\bQ}^{\Hom_\bQ(R_{[x]},\overline{\bQ})}.
\end{align}
Moreover, the action of $m_T^{g-1} \otimes 1$ is diagonalized in the right-hand side as $(\sigma(T)^{g-1})_{\sigma}$. 

Recall that the closed point $[x]\in \mathrm{Fix}_\bQ$ and a homomorphism $\sigma \in \Hom_\bQ(R_{[x]}, \overline{\bQ})$ together determine a $\overline{\bQ}$-point $x=([x],\sigma) \in \Fix_\phi(\overline{\bQ})$.
Since the trace is invariant under the scalar extension, the sum \eqref{eq:trace_decomp} can be rewritten as
\begin{align}
\begin{aligned}\label{eq:trace_decomp2}
    \Tr_{R/\bQ} (T^{g-1}) &= \sum_{[x] \in \Fix_\phi}\Tr_{R_{[x]}/\bQ} (T^{g-1}) \\
    &= \sum_{[x] \in \Fix_\phi} \sum_{\sigma \in \Hom_\bQ(R_{[x]},\overline{\bQ})}\sigma(T)^{g-1} \\
    &= \sum_{x=([x],\sigma) \in \Fix_\phi(\overline{\bQ})}\sigma(T)^{g-1} \\
    &= \sum_{x \in \Fix_\phi(\overline{\bQ})}\det(1-d\phi_x)^{g-1} = \cI_g^\phi.
\end{aligned}
\end{align}
Thus the assertion is proved.
\end{proof}
From the proof, we obtain the decomposition
\begin{align}\label{eq:Artin_canonical_barQ}
    R \otimes_\bQ \overline{\bQ} \cong \prod_{x \in \Fix_\phi(\overline{\bQ})} \overline{\bQ}e_x,
\end{align}
where $e_x$ is the idempotent element such that $e_x \cdot e_y =\delta_{xy}e_x$ and $1_R=\sum_{x \in \Fix_\phi(\overline{\bQ})}e_x$.

\begin{cor}\label{cor:rational}
Assume that $\phi$ is non-degenerate (\cref{def:non-deg}).
Then $\Ind_g^\phi \in \bQ$ for all $g \geq 0$.
\end{cor}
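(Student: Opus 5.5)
The statement follows almost directly from \cref{prop:index_trace}, so the plan is to check that both ingredients of the trace formula live over $\bQ$. First, by \cref{lem:fixed_point_Artinian}, non-degeneracy makes $R=R_\phi$ a reduced Artinian $\bQ$-algebra. It is a finite product of fields $R_{[x]}$, each finite over $\bQ$. Hence $R$ is a finite-dimensional $\bQ$-vector space, and $\Tr_{R/\bQ}\colon R\to\bQ$ is a well-defined $\bQ$-linear map with values in $\bQ$, being the trace of a $\bQ$-linear endomorphism of a finite-dimensional $\bQ$-vector space.

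Second, I need $T=\det(1-d\phi_\bullet)$ to be an element of $R$ itself, not merely of $R\otimes_\bQ\overline{\bQ}$. The entries of the Jacobian of $\phi$ are rational functions in the $y_i$ with $\bQ$-coefficients, being composites of the cluster Poisson transformations \eqref{eq:mutation_X} and permutations. On $U_\phi$ they are regular functions defined over $\bQ$. Restricting them to $\Fix_\phi\subset U_\phi$ and taking the determinant of $1$ minus this matrix therefore gives an element $T\in R$, as already asserted before \cref{prop:index_trace}.

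The case $g=0$ needs a little more care, because $T^{g-1}=T^{-1}$. Non-degeneracy says that $T$ is nonzero at every closed point, that is, its image in each field factor $R_{[x]}$ is nonzero. So $T$ is a unit in $R\cong\prod R_{[x]}$, and $T^{-1}\in R$. For $g\ge1$, $T^{g-1}\in R$ trivially.

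Combining these, $\Ind_g^\phi=\Tr_{R/\bQ}(T^{g-1})\in\bQ$ for all $g\ge0$. The only point I expect to need any care is the invertibility of $T$ for $g=0$, which is handled by the product decomposition \eqref{eq:Artin_canonical}. As an alternative check, one could argue via Galois invariance: $\mathrm{Gal}(\overline{\bQ}/\bQ)$ permutes $\Fix_\phi(\overline{\bQ})$ and sends $\det(1-d\phi_x)$ to $\det(1-d\phi_{\tau x})$, so the sum is Galois-fixed and hence rational. The trace argument above subsumes this.
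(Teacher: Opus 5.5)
Your proposal is correct and follows the paper's approach: the paper likewise deduces the corollary directly from $\Ind_g^\phi=\Tr_{R/\bQ}(T^{g-1})\in\bQ$ (\cref{prop:index_trace}). Your check that $T$ is a unit in $R\cong\prod_{[x]}R_{[x]}$, so that $T^{-1}\in R$ makes sense for $g=0$, fills in a detail the paper leaves implicit.
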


\begin{proof}
This follows from $\Ind_g^\phi=\Tr_{R/\bQ} (T^{g-1}) \in \bQ$.
\end{proof}

\begin{prop}\label{lem:index_integral}
If the characteristic polynomial of $m_T \in \End_\bQ(R)$ has integral coefficients, then $\Ind_g^\phi \in \bZ$ for all $g \geq 1$.
\end{prop}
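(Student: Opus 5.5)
By \cref{prop:index_trace}, $\Ind_g^\phi=\Tr_{R/\bQ}(T^{g-1})=\Tr(m_T^{g-1})$ for $g\geq 1$. Here $m_T^{g-1}=m_{T^{g-1}}$ is a power of the multiplication operator $m_T\in\End_\bQ(R)$, and for $g=1$ it is the identity. So the task reduces to showing that the traces of the nonnegative powers of $m_T$ are integers whenever its characteristic polynomial $\chi(\lambda)=\det(\lambda-m_T)$ lies in $\bZ[\lambda]$.

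Let $n=\dim_\bQ R$ and write $\chi(\lambda)=\lambda^n+c_1\lambda^{n-1}+\dots+c_n$ with $c_i\in\bZ$. Over $\overline{\bQ}$, $\chi$ factors as $\prod_{j=1}^n(\lambda-\lambda_j)$, and $\Tr(m_T^k)=p_k:=\sum_j\lambda_j^k$. (In fact the decomposition \eqref{eq:Artin_canonical_barQ} identifies the $\lambda_j$ with the values $\det(1-d\phi_x)$, but this is not needed.) The power sums then lie in $\bZ$ for all $k\geq 0$, by either of two standard arguments.

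\begin{itemize}
\item \emph{Newton's identities.} We have $p_0=n\in\bZ$, and $p_k+c_1p_{k-1}+\dots+c_{k-1}p_1+kc_k=0$ for $1\le k\le n$. For $k>n$ the recurrence $p_k=-\sum_{i=1}^n c_ip_{k-i}$ follows from Cayley--Hamilton applied to $m_T^{k-n}\chi(m_T)=0$ and then taking traces. Induction on $k$ gives $p_k\in\bZ$, exactly as in \cref{ex:A2_index}.
\item \emph{Algebraic integers.} Each $\lambda_j$ is a root of a monic integer polynomial, hence an algebraic integer, so $p_k$ is an algebraic integer. It is also rational by \cref{cor:rational}, hence an integer.
\end{itemize}

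Either route gives $\Ind_g^\phi=p_{g-1}\in\bZ$ for all $g\ge 1$.

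There is no real obstacle. The one point to watch is that $g=0$ is excluded: $p_{-1}=\Tr(m_T^{-1})=-c_{n-1}/c_n$ requires $m_T$ invertible (true by non-degeneracy) but is not integral in general, as the $A_2$ example shows.
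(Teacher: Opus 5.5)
Your proof is correct and follows the paper's argument: the paper likewise reads $\Ind_g^\phi=\Tr(m_T^{g-1})$ as a power sum of the eigenvalues of $m_T$ and concludes by Newton's identities, and your algebraic-integer alternative (eigenvalues are algebraic integers, plus rationality from \cref{cor:rational}) is an equally valid shortcut. One correction to your closing remark on $g=0$: in the $A_2$ example $\Ind_0^\phi=1$ is an integer (only the refined $\Ind_{0,t}^\phi$ fails to be a polynomial there), so a correct witness is the Kronecker case $K=2$, where $T=4$, the characteristic polynomial of $m_T$ is $\lambda-4$, and $\Ind_0^\phi=\tfrac14$.
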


\begin{proof}
The assumption implies that all the elementary symmetric polynomials of the eigenvalues of $m_T$ are integers. By Newton's identities, every power sum is an integer polynomial in the elementary symmetric functions, hence an integer as well.
\end{proof}

\begin{rem}\label{rem:I_1_dim}
From \cref{ex:ind_1}, we further obtain
$\Ind_1^\phi = |\Fix_\phi(\overline{\bQ})| = \dim_\bQ R_\phi$.
\end{rem}

\subsection{Refinement}
For the refined index, consider $T(t):=\det(t-d\phi_\bullet) \in R[t]$. The multiplication operator $m_{T(t)}: R[t] \to R[t]$ is defined as a $\bQ[t]$-linear map. \cref{prop:index_trace} is refined to be
\begin{align*}
    \Ind_{g,t}^\phi=\Tr_{R[t]/\bQ[t]}(T(t)^{g-1})
\end{align*}
for $g \geq 1$. For $g=0$, we have $\Ind_{0,t}^\phi=\Tr_{R(t)/\bQ(t)}(T(t)^{-1})$. 
Indeed, the isomorphism \eqref{eq:isom_diagonalization} is extended to be
\begin{align*}
    R_{[x]}[t] \otimes_{\bQ[t]} \overline{\bQ}[t] \cong \overline{\bQ}[t]^{\Hom_{\bQ}(R_{[x]},\overline{\bQ})},
\end{align*}
and the action of $m_{T(t)}^{g-1}\otimes 1$ is diagonalized in the right-hand side as $(\sigma(T(t))^{g-1})_\sigma$. The remaining argument is the same. In particular, we have 
\begin{align*}
    &\Ind_{g,t}^\phi \in \bQ[t] \qquad \mbox{for $g \geq 1$}, \\
    &\Ind_{0,t}^\phi \in \bQ(t),
\end{align*}
as a refinement of \cref{cor:rational}. The final statement (\cref{lem:index_integral}) is refined as follows:
\begin{prop}\label{lem:index_integral_refined}
If the characteristic polynomial of $m_{T(t)} \in \End_{\bQ[t]}(R[t])$ has integral coefficients, then $\Ind_{g,t}^\phi \in \bZ[t]$ for all $g \geq 1$ .
\end{prop}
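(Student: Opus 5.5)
We argue exactly as in \cref{lem:index_integral}, now over the polynomial ring $\bZ[t]$ instead of $\bZ$. Set $n:=\dim_\bQ R=|\Fix_\phi(\overline{\bQ})|$ (\cref{rem:I_1_dim}). By the diagonalization $R[t]\otimes_{\bQ[t]}\overline{\bQ}[t]\cong \overline{\bQ}[t]^{\Fix_\phi(\overline{\bQ})}$ established just above, the operator $m_{T(t)}$ acts diagonally with entries $\lambda_x(t):=\det(t-(d\phi)_x)\in\overline{\bQ}[t]$, one for each $x\in\Fix_\phi(\overline{\bQ})$. Hence its characteristic polynomial is
\begin{align*}
\chi(\lambda)=\prod_{x\in\Fix_\phi(\overline{\bQ})}(\lambda-\lambda_x(t))=\lambda^n-e_1\lambda^{n-1}+\dots+(-1)^ne_n,
\end{align*}
where $e_k\in\bQ[t]$ is the $k$-th elementary symmetric polynomial of the $\lambda_x(t)$. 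The hypothesis says exactly that every $e_k\in\bZ[t]$.

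The key step is Newton's identities. For $g\geq 1$ let $p_{g-1}:=\sum_x\lambda_x(t)^{g-1}$. By the refined trace formula, $p_{g-1}=\Tr_{R[t]/\bQ[t]}(T(t)^{g-1})=\Ind_{g,t}^\phi$. For $g=1$ we get $p_0=n\in\bZ$. For $k\geq 1$, the identity $p_k=\sum_{i=1}^{k}(-1)^{i-1}e_ip_{k-i}$ (with $e_i:=0$ for $i>n$ and $p_0=n$) holds in any commutative ring. Induction on $k$ then gives $p_k\in\bZ[t]$, since $\bZ[t]$ is a subring and each term is a product of elements already known to lie in it.

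An alternative route avoids Newton's identities. Since $\chi$ is monic with coefficients in $\bZ[t]$, the Cayley--Hamilton theorem lets us consider the companion matrix $C\in\mathrm{Mat}_n(\bZ[t])$ of $\chi$. Over $\overline{\bQ}(t)$ it has the same eigenvalues (with multiplicity) as $m_{T(t)}$, so $\Ind_{g,t}^\phi=\Tr(C^{g-1})\in\bZ[t]$. Either argument works.

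There is no real obstacle. The only point needing care is that the trace identity and the eigenvalue description are valid over the ring $\bQ[t]$, not only over a field; this follows from the base-changed diagonalization already noted. Note also that $g=0$ is excluded, because $T(t)^{-1}$ need not be integral.
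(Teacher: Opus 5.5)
Your argument is correct and follows the paper's route. The paper proves the unrefined \cref{lem:index_integral} by Newton's identities and gets this refinement by running the same argument over $\bZ[t]$, which is exactly what you do.

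There is one slip in how you state Newton's identity. The formula $p_k=\sum_{i=1}^{k}(-1)^{i-1}e_ip_{k-i}$ with $p_0=n$ is false for $1\le k<n$; for instance, $n=2$, $k=1$ would give $p_1=2e_1$. The correct identity is $p_k=\sum_{i=1}^{k-1}(-1)^{i-1}e_ip_{k-i}+(-1)^{k-1}k\,e_k$. Since $k\,e_k\in\bZ[t]$, your induction goes through unchanged.

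Your companion-matrix alternative avoids the issue entirely. There you only need that the companion matrix of the monic $\chi$ has characteristic polynomial $\chi$; Cayley--Hamilton is not the relevant fact.
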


For an inclusion $R \subset S$ of unital commutative rings, let $\overline{R}^S \subset S$ denote its integral closure of $R$ in $S$, which consists of those elements $a \in S$ satisfying $f(a)=0$ for some monic polynomial $f \in R[X]$. 
We say that $R$ is integrally closed in $S$ if $R=\overline{R}^S$. 
An element $a \in S$ belongs to $\overline{R}^S$ if and only if $S[a]$ is a finitely generated $A$-module \cite[Proposition 5.1]{AtiMac}.

For a finite-dimensional $\bQ$-algebra $R$, let $\cO_R:=\overline{\bZ}^R \subset R$ denote the subring of elements integral over $\bZ$. 
It is well-known that $\bZ[t]$ is integrally closed in $\bQ(t)$. 
Let $\Omega \subset \overline{\bQ}$ denote the ring of algebraic integers. 

\begin{lem}\label{lem:char_poly_integral}
If $T(t) \in \cO_{R_\phi}[t]$, then the characteristic polynomial of $m_{T(t)} \in \End_{\bQ[t]}(R_\phi[t])$ has coefficients in $\bZ[t]$.
\end{lem}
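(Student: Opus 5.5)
We diagonalize the multiplication operator over $\overline{\bQ}(t)$ and show that its characteristic polynomial has coefficients that are both rational and integral over $\bZ$.

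\textbf{Step 1: diagonalization.} By the extended isomorphism $R_{[x]}[t]\otimes_{\bQ[t]}\overline{\bQ}[t]\cong \overline{\bQ}[t]^{\Hom_\bQ(R_{[x]},\overline{\bQ})}$ and the decomposition \eqref{eq:Artin_canonical}, the operator $m_{T(t)}$ on $R_\phi[t]$ becomes, after base change to $\overline{\bQ}[t]$, the diagonal operator with entries $\sigma(T(t))=\det(t-(d\phi)_x)$ for $x\in\Fix_\phi(\overline{\bQ})$. Here $\sigma$ acts on the coefficients of $T(t)$ in $t$. Since the characteristic polynomial is invariant under scalar extension, we get
\begin{align*}
\chi(X):=\det\big(X-m_{T(t)}\big)=\prod_{x\in\Fix_\phi(\overline{\bQ})}\big(X-x^\#(T(t))\big),
\end{align*}
where $x^\#$ is applied coefficientwise. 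A priori this lies in $\bQ[t][X]$, because $R_\phi[t]$ is a free $\bQ[t]$-module of rank $\dim_\bQ R_\phi$ and $m_{T(t)}$ is $\bQ[t]$-linear.

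\textbf{Step 2: integrality of the roots.} Write $T(t)=\sum_j c_j t^j$ with each $c_j\in\cO_{R_\phi}$. Then every $c_j$ satisfies a monic polynomial with coefficients in $\bZ$. Applying the ring homomorphism $x^\#$, each $x^\#(c_j)$ satisfies the same monic polynomial, so $x^\#(c_j)\in\Omega$. Hence every root $x^\#(T(t))$ of $\chi$ lies in $\Omega[t]$. It follows that the coefficients of $\chi$, being elementary symmetric polynomials in these roots, lie in $\Omega[t]$.

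\textbf{Step 3: conclusion.} Combining the two steps, each coefficient of $\chi$ lies in $\bQ[t]\cap\Omega[t]$. Comparing coefficients of each power of $t$, every such coefficient lies in $\bQ\cap\Omega$. Since $\bZ$ is integrally closed in $\bQ$, we have $\bQ\cap\Omega=\bZ$. Therefore $\chi\in\bZ[t][X]$, which is the claim.

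The only point needing care is Step 1: we must check that the characteristic polynomial over $\bQ[t]$ is computed correctly after base change, with $t$ kept as an indeterminate and the Galois embeddings acting only on the coefficients. This is the same base change argument used in the proof of \cref{prop:index_trace}, with $\bQ$ replaced by $\bQ[t]$, so it is routine.
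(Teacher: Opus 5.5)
Your proof is correct, but it takes a different route from the paper's.

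The paper never diagonalizes. It argues as follows:
\begin{itemize}
\item $S=\bZ[a_0,\dots,a_d]$ is a finitely generated $\bZ$-module, so $T(t)$ is integral over $\bZ[t]$, i.e.\ $f(T(t))=0$ for some monic $f\in\bZ[t][X]$.
\item Hence $f(m_{T(t)})=0$, so every eigenvalue of $m_{T(t)}$ in $\overline{\bQ(t)}$ is a root of $f$.
\item The coefficients of the characteristic polynomial are therefore integral over $\bZ[t]$, and they lie in $\bZ[t]$ because $\bZ[t]$ is integrally closed in $\bQ(t)$.
\end{itemize}

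You instead use the splitting $R_\phi\otimes_\bQ\overline{\bQ}\cong\prod_{x}\overline{\bQ}e_x$ to identify the eigenvalues explicitly as $x^\#(T(t))\in\Omega[t]$. This lets you replace the integral closedness of $\bZ[t]$ in $\bQ(t)$ by the more elementary fact $\bQ\cap\Omega=\bZ$, applied coefficientwise in $t$.

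The cost is that your Step 1 relies on $R_\phi$ being reduced Artinian, i.e.\ on the non-degeneracy of $\phi$. That assumption is in force in this section, so there is no gap. The paper's Cayley--Hamilton-style argument, by contrast, works for any finite-dimensional $\bQ$-algebra and any element of its $\cO_R[t]$, without reference to the points of $\Fix_\phi$.
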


\begin{proof}
Write $T(t)=\sum_{i=0}^d a_i t^i$ with $a_0,\dots,a_d \in \cO_{R_\phi}$. The integrality implies that $S:=\bZ[a_0,\dots,a_d] \subset R_\phi$ is a finitely generated $\bZ$-module, and hence $S[t] \subset R_\phi[t]$ is a finitely generated $\bZ[t]$-module. Hence $T(t)$ is integral over $\bZ[t]$ as an element of $S[t]$. In particular, there exists a monic polynomial $f(X) \in (\bZ[t])[X]$ such that $f(T(t))=0$, and hence $f(m_{T(t)})=0$. 

After extending scalars to $\bQ(t)$, let $\lambda_1,\dots,\lambda_r \in \overline{\bQ(t)}$ be the eigenvalues of $m_{T(t)} \in \End_{\bQ[t]}(R_\phi[t])$, where $r:=\dim_\bQ R_\phi$. The characteristic polynomial of $m_{T(t)}$ is written as
\begin{align*}
    \chi_t(X):=\prod_{i=1}^r (X-\lambda_i) = \sum_{k=0}^r c_k X^k \in (\bQ[t])[X].
\end{align*}
Since each $\lambda_i$ is a root of the monic polynomial $f$, it is integral over $\bZ[t]$. Therefore, the coefficients $c_0,\dots,c_r$ are also integral over $\bZ[t]$. 
Since $\bZ[t]$ is integrally closed, 
it implies that they belong to $\bZ[t]$. Thus $\chi_t(X)$ has coefficients in $\bZ[t]$. 
\end{proof}

\subsection{Integrality in the finite order case}

\begin{thm}\label{thm:finite_order}
For any non-degenerate mutation loop $\phi$ such that $\phi^m=1$ for some $m \in \bZ_{>0}$, we have
\begin{align*}
    \Ind_{g,t}^\phi \in \bZ[t]
\end{align*}
for all $g \geq 1$.
\end{thm}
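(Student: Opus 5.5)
The plan is to use \cref{lem:char_poly_integral} together with \cref{lem:index_integral_refined}. It therefore suffices to show $T(t)=\det(t-d\phi_\bullet)\in \cO_{R_\phi}[t]$. Expanding the determinant, the coefficients of $T(t)$ are, up to sign, the elementary symmetric functions of the eigenvalues of $d\phi_\bullet$. Equivalently, they are the traces of the exterior powers $\Lambda^k(d\phi_\bullet)$, viewed as elements of $R_\phi$. Since $\cO_{R_\phi}$ is a ring, it is enough to show that each such coefficient is integral over $\bZ$. By the decomposition \eqref{eq:Artin_canonical_barQ}, an element $a\in R_\phi$ is integral over $\bZ$ if and only if $\sigma(a)\in\Omega$ for every $x\in\Fix_\phi(\overline{\bQ})$. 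Here $\sigma=x^\#$ denotes evaluation at $x$. This holds because the minimal polynomial of $m_a$ over $\bQ$ is $\prod(X-\alpha)$ over the distinct values $\alpha=\sigma(a)$, and its coefficients are rational and integral over $\bZ$, hence in $\bZ$. So the problem reduces to a pointwise statement about eigenvalues at $\overline{\bQ}$-fixed points.

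For the pointwise statement, I fix $x\in\Fix_\phi(\overline{\bQ})$ and use $\phi^m=1$. Since $\phi(x)=x$, we have $x\in U_{\phi^k}$ for all $k$, and iterating the chain rule at the fixed point gives $(d\phi)_x^m=d(\phi^m)_x$. The map $\phi^m$ is the identity as a birational map, since it is the identity mutation loop. Hence $d(\phi^m)_x=1$ on the open set where it is defined, which contains $x$. Therefore $(d\phi)_x^m=1$. Every eigenvalue of $(d\phi)_x$ is then an $m$-th root of unity, which lies in $\Omega$. Consequently the coefficients of $\det(t-(d\phi)_x)$, being elementary symmetric functions of these eigenvalues, lie in $\Omega$.

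Combining the two steps, each coefficient $a_i$ of $T(t)$ satisfies $x^\#(a_i)\in\Omega$ for all $x$, so $a_i\in\cO_{R_\phi}$. Then \cref{lem:char_poly_integral} gives that the characteristic polynomial of $m_{T(t)}$ lies in $\bZ[t][X]$. Finally, \cref{lem:index_integral_refined} gives $\Ind^\phi_{g,t}\in\bZ[t]$ for $g\ge1$, via the refined Newton identities over $\bZ[t]$.

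The main point needing care is the chain-rule step. I must make sure that "$\phi^m=1$" as an element of the cluster modular group means that the composite birational map on $(\bG_m)^I$ is the identity, and that the composite of the tangent maps along the orbit is well defined. The orbit here is the single point $x$, since $x$ is fixed, and $x\in U_\phi$, so each factor is defined at $x$. The identification of mutation loops with the same terminal seed guarantees that $\phi^m$ acts on coordinates as the identity. The remaining work is the integrality criterion for elements of the reduced Artinian ring, which is routine given \eqref{eq:Artin_canonical_barQ}.
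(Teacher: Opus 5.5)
Your proposal is correct and follows essentially the same route as the paper: it shows $(d\phi)_x^m=1$ at each $\overline{\bQ}$-fixed point, so the coefficients of $\det(t-(d\phi)_x)$ lie in $\Omega$, deduces $T(t)\in\cO_{R_\phi}[t]$, and concludes via \cref{lem:char_poly_integral} and \cref{lem:index_integral_refined}. The only difference is cosmetic: you check integrality of the coefficients globally through the minimal polynomial of $m_a$, whereas the paper argues componentwise in $R\cong\prod_{[x]}R_{[x]}$ (an element of a number field is integral once its image under one embedding lies in $\Omega$). Your extra care with the chain rule at the fixed point spells out a step the paper states in one line.
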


\begin{proof}
For each $x \in \mathrm{Fix}_\phi(\overline{\bQ})$, $\phi^m=1$ implies $(d\phi)_{x}^m=\mathrm{id}$. In particular, all the eigenvalues of $(d\phi)_{x}$ are $m$-th roots of unity, which are algebraic integers. Therefore, the coefficients of the characteristic polynomial of $(d\phi)_{x}$ belong to $\Omega$.

Write $R:=R_\phi$, and $T(t)=\sum_{i=0}^d a_i t^i \in R[t]$. We can write $a_i=(a_{i,[x]})_{[x] \in \Fix_\phi}$ in the canonical decomposition $R \cong \prod_{[x] \in \Fix_\phi}R_{[x]}$. Every embedding $\sigma: R_{[x]} \to \overline{\bQ}$ gives rise to a geometric point $x=([x],\sigma) \in \Fix_\phi(\overline{\bQ})$. The base-change $\sigma: R_{[x]}[t] \to \overline{\bQ}[t]$ satisfies
\begin{align*}
    \sigma(T(t)_{[x]})=\det(t-(d\phi)_x) \in \overline{\bQ}[t].
\end{align*}
In particular, $\sigma(a_{i,[x]}) \in \Omega$ since it is a coefficient of the characteristic polynomial of $(d\phi)_x$. Since $\sigma$ is an embedding, this implies $a_{i,[x]} \in \cO_{R_{[x]}}$. Therefore $a_i \in \prod_{[x]} \cO_{R_{[x]}} = \cO_R$ for all $i=0,\dots,d$, and hence $T(t) \in \cO_R[t]$. Then by \cref{lem:char_poly_integral} and \cref{lem:index_integral_refined}, we obtain $\Ind_{g,t}^\phi \in \bZ$ for all $g \geq 1$.
\end{proof}

\subsection{Integrality condition from Mizuno's Jacobian formula}\label{subsec:Mizuno}

We obtain the following integrality criterion:

\begin{thm}\label{prop:shape_integral}
Assume that $\phi$ is non-degenerate and $\Fix_\phi=\Fix_\phi^\ast$. 
If moreover the shape parameters $\varphi^\ast z_{m,\pm}$ belong to $\cO_{R_\phi}$ for all $m \in B$, then 
\begin{align*}
    \Ind_{g,t}^\phi \in \bZ[t]
\end{align*}
for all $g \geq 1$. In particular, we have $\Ind_g^\phi \in \bZ$ for all $g \geq 1$. 
\end{thm}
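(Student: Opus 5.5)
The plan is to reduce to \cref{lem:char_poly_integral} and \cref{lem:index_integral_refined}. As in the proof of \cref{thm:finite_order}, it suffices to show $T(t)=\det(t-d\phi_\bullet)\in \cO_{R_\phi}[t]$. Integrality can be checked after every embedding $\sigma: R_{[x]}\to\overline{\bQ}$, so the goal becomes: for each $x \in \Fix_\phi(\overline{\bQ})$, the coefficients of $\det(t-(d\phi)_x)$ lie in $\Omega$.

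To get this, write $\phi$ as the composite of its steps $\bi^{(0)}\to\cdots\to\bi^{(L)}$. Let $x^{(\nu)}$ be the image of $x$ after the first $\nu$ steps, so $x^{(L)}=x^{(0)}=x$. Since $\Fix_\phi=\Fix_\phi^\ast$, each mutation step $\mu_{k_\nu}$ is defined at $x^{(\nu)}$ with $y_{k_\nu}(x^{(\nu)})\neq -1$. By the chain rule,
\begin{align*}
(d\phi)_x = J_L J_{L-1}\cdots J_1,
\end{align*}
where $J_\nu$ is a permutation matrix for a permutation step. For a mutation step, \cref{prop:Jacobian} gives
\begin{align*}
J_\nu = \by(x^{(\nu+1)})\cdot M_\nu \cdot \by(x^{(\nu)})^{-1}, \qquad M_\nu := z_-F_{k,+}(B^{(\nu)})^\top + z_+F_{k,-}(B^{(\nu)})^\top .
\end{align*}
Here $z_\pm$ are evaluated at $x^{(\nu)}$, and $\by'(\mu_k(p))=\by(x^{(\nu+1)})$. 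Permutation matrices also conjugate diagonal matrices compatibly: $P\,\by(p)=\by(\sigma p)\,P$.

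The diagonal factors then telescope. The product equals $\by(x^{(L)})\,(M_L\cdots M_1)\,\by(x^{(0)})^{-1}$, where each permutation step contributes its permutation matrix as $M_\nu$. Because $x^{(L)}=x^{(0)}$, $(d\phi)_x$ is conjugate to $M:=M_L\cdots M_1$, and so $\det(t-(d\phi)_x)=\det(t-M)$.

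The entries of $M$ are integer polynomials in the values $z_{m,\pm}(x)=\sigma(\varphi^\ast z_{m,\pm})$. Each black vertex $m\in B$ corresponds to one mutation step, and \cref{thm:fix_glue} identifies $y_{k_\nu}^{(\nu)}$ with the shape parameters of $m_\nu$. By hypothesis $\varphi^\ast z_{m,\pm}\in\cO_{R_\phi}$, so these values lie in $\Omega$. Hence $M$ has entries in $\Omega$, and $\det(t-M)\in\Omega[t]$.

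It follows that the coefficients $a_i\in R_\phi$ of $T(t)$ satisfy $\sigma(a_{i,[x]})\in\Omega$ for every embedding $\sigma$. Then $a_{i,[x]}$ is integral over $\bZ$: its minimal polynomial over $\bQ$ has conjugate roots in $\Omega$, so it has coefficients in $\Omega\cap\bQ=\bZ$. This gives $T(t)\in\cO_{R_\phi}[t]$. \cref{lem:char_poly_integral} and \cref{lem:index_integral_refined} then yield $\Ind_{g,t}^\phi\in\bZ[t]$ for $g\ge1$, and setting $t=1$ gives $\Ind_g^\phi\in\bZ$.

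The main obstacle is the telescoping step. It requires tracking exactly which points the diagonal matrices $\by,\by'$ in \cref{prop:Jacobian} are evaluated at, and how permutation steps interact with them. It also requires confirming that $z_\pm$ at $x^{(\nu)}$ really are the shape parameters attached to the black vertex $m_\nu$ under $\varphi$. The remaining steps are formal.
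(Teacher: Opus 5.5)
Your proposal is correct and follows essentially the same route as the paper. It telescopes the diagonal factors in Mizuno's Jacobian formula using the fixed point condition, so that $(d\phi)_x$ is conjugate to a product of permutation matrices and integral combinations of shape parameters; it then concludes $T(t)\in\cO_{R_\phi}[t]$ and applies \cref{lem:char_poly_integral} and \cref{lem:index_integral_refined}. The only difference is cosmetic. The paper asserts directly that the conjugated Jacobian has entries in $\cO_{R_\phi}$, whereas you check integrality pointwise through the embeddings $\sigma$, as in the proof of \cref{thm:finite_order}, and you justify that descent step via minimal polynomials more explicitly than the paper does.
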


\begin{proof}
Let $x \in \Fix_\phi(\overline{\bQ})$ be a non-degenerate fixed point of $\phi$. We use the Jacobian formula (\cref{prop:Jacobian}) for each mutation step. After the composition, the $\by$-terms cancel together except for the initial and the terminal ones, which coincides with each other by the fixed point condition $\by^{(L)}=\by^{(0)}$. Hence, the Jacobian of $\phi$ at $x$ is conjugate to the product of matrices of the form
\begin{align*}
    \varphi^\ast z_{m,-}(x) F_{k,+}(B)^\top + \varphi^\ast z_{m,+}(x) F_{k,-}(B)^\top
\end{align*}
and permutation matrices. Here, each $\varphi^\ast z_{m,\pm}(x)$ is well-defined by $x\in \Fix_\phi^\ast(\overline{\bQ})$.
Since $\varphi^\ast z_{m,\pm} \in \cO_{R_\phi}$ by assumption, it follows that $(d\phi)_\bullet$ is presented by a matrix with entries in $\cO_{R_\phi}$. Therefore $T(t) \in \cO_{R_\phi}[t]$, and hence we get $\Ind_{g,t}^\phi \in \bZ[t]$ for all $g \geq 1$ by combining \cref{lem:char_poly_integral} and \cref{lem:index_integral_refined}.  
\end{proof}

\begin{rem}
Since $z_{m,+}+z_{m,-}=1$, the integrality of one of them implies the other.
\end{rem}

\subsection{Mizuno's determinant formula}
Based on the Jacobian formula (\cite{Mizuno:Jac}), Mizuno obtained the following closed form expression of the characteristic polynomial of the Jacobian of a mutation loop, which can be used to explicitly compute $T(t)$ in special cases. 

We assume that $\phi$ is fully-mutating, and that $|B|=|W|$ in the mutation network $\cN_\phi$. Fix an enumeration $B\cong \{1,\dots,r\}$, and define $W \cong \{1,\dots,r\}$ by labeling each white vertex by the black vertex at its future end. The rows and columns of (graded) Neumann-Zagier matrices are enumerated in this way.

\begin{thm}[Mizuno {\cite[Theorem 1.1]{Mizuno:Jac} for $t=1$; \cite[Proposition 3.4.3]{Mizuno:thesis_unpub}}]
For a fully-mutating mutation loop $\phi$ such that $r:=|B|=|W|$, we have
\begin{align}\label{eq:Mizuno_det}
    \det(1-t^\ell(d\phi)_x) = \det(A^+(t) \mathbf{z}_-(x) + A^-(t) \mathbf{z}_+(x))
\end{align}
for any $x \in \Fix_\phi^\ast(\overline{\bQ})$. 
Here, 
\begin{itemize}
    \item $A^\pm(t) \in \mathrm{Mat}_{r}(\bZ[t])$ are graded Neumann--Zagier matrices of $\phi$ \cref{subsec:graded_NZ};
    \item $\mathbf{z}_\pm(x):=\mathrm{diag}(z_{m,\pm}(\varphi(x))\mid m \in B)$ (recall \cref{thm:fix_glue});
    \item $\ell \in \bZ_{\geq 0}$ is the total number of mutations in $\phi$.\footnote{In the general formulation of \cite{Mizuno:thesis,Mizuno:thesis_unpub}, this number depends on a choice of the \emph{consistent subset} $R \subset I \times \bZ$, and $\ell$ can be made smaller when $\phi$ contains a block of mutually-commuting mutations. }
\end{itemize}

\end{thm}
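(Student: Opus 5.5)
The plan is to prove \eqref{eq:Mizuno_det} by combining the step-by-step Jacobian formula (\cref{prop:Jacobian}) with a block-matrix (Schur complement) reduction that eliminates the white-vertex variables along each vertex orbit. The parameter $t$ will be introduced as a formal time-step weight: each mutation step contributes a factor $t$. Then $t^\ell(d\phi)_x$ is conjugate to the product over the $\ell$ mutation steps of $t\,(z_-F_{k,+}^\top+z_+F_{k,-}^\top)$ interleaved with permutation matrices. This uses the same cancellation of the diagonal $\by$-matrices as in the proof of \cref{prop:shape_integral}, which relies on the fixed point condition $\by^{(L)}=\by^{(0)}$.

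\emph{Step 1 (linearization as a periodic system).} Write the linearized dynamics in logarithmic coordinates $\delta\log y_i^{(\nu)}$, indexed by the white vertices $w_i^{(\nu)}$ of $\widetilde{\cN}_\phi$ within one period. A mutation at $k$ gives
\begin{align*}
\delta\log y_k^{(\nu+1)}=-\delta\log y_k^{(\nu)}, \qquad \delta\log y_i^{(\nu+1)}=\delta\log y_i^{(\nu)}+(\text{term in } z_{m,\pm}\,\delta\log y_k^{(\nu)} \text{ weighted by } b_{ki}).
\end{align*}
Gray lines are identities. The quantity $\det(1-t^\ell d\phi)$ is then the determinant of the ``transfer'' linear system $(\mathrm{Id}-t\,\mathcal{S})$ on the space of all white vertices of one period, with periodic boundary conditions. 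Here $\mathcal{S}$ is the one-step shift operator. This is the standard identity $\det(1-t^L M_L\cdots M_1)=\det$ of the cyclic block matrix with identities on the diagonal and $-tM_j$ on the off-diagonal, with $t$ placed only on mutation steps.

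\emph{Step 2 (contraction of gray orbits).} Along a gray path the variable is transported unchanged, picking up a factor $t$ per mutation step. We eliminate all intermediate variables on each vertex orbit by Gaussian elimination, which does not change the determinant because the eliminated diagonal blocks are unipotent. This leaves one variable per white vertex $w\in W$ of $\cN_\phi$. Since $\phi$ is fully-mutating, the orbit begins at some $\widetilde m_+$ and ends at $\partial_+$. The contribution of a quiver arrow from $\widetilde m$ to $\widetilde w$ then carries $t^{\ell_+(\widetilde w)}$. The dashed edges carry $1$ (past end) and $t^{1+\ell_+(\widetilde m_+)}$ (future end). In this way one reads off precisely $N^0(t)$ and $N^\pm(t)$.

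\emph{Step 3 (matching to $A^\pm(t)$).} Using $z_++z_-=1$, rewrite the coupling $N^0(t)-(z_-N^+(t)+z_+N^-(t))$ as $A^+(t)\mathbf z_-+A^-(t)\mathbf z_+$. The hypothesis $|B|=|W|$ makes the resulting matrix square, and identifying $W$ with $B$ via future ends fixes the ordering. The determinant of the reduced $r\times r$ system then equals $\det(A^+(t)\mathbf z_-(x)+A^-(t)\mathbf z_+(x))$. Setting $t=1$ recovers \cite[Theorem 1.1]{Mizuno:Jac}, which serves as a consistency check.

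The main obstacle is Step 2. The bookkeeping of exponents of $t$ must exactly reproduce the latency $\ell_+$ and the extra $t^{1+\ell_+}$ on the future dashed edge. One must also check that the elimination order introduces no sign or extra scalar factor, such as a determinant of the elimination blocks or the $-1$ from $y_k\mapsto y_k^{-1}$. I would first verify the bookkeeping on the $A_2$ example, where $A^+(t)=1+t^2$ and $A^-(t)=1-t+t^2$ must reproduce $\det(1-t^2 d\phi)$, before handling the general case.
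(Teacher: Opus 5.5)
Your proposal is correct and follows the paper's argument. The paper also passes to an enlarged matrix $\widetilde{L}$ of size $\ell|I|$ with $\det(1-t\widetilde{L})=\det(1-t^\ell(d\phi)_x)$, and your elimination along gray orbits is its multiplication by $(1-t\widetilde{H})^{-1}$, where $\widetilde{H}$ is nilpotent by the fully-mutating condition, so $\det(1-t\widetilde{H})=1$ and only the block on the mutation indices survives, namely $A^+(t)\mathbf{z}_-(x)+A^-(t)\mathbf{z}_+(x)$ (the $-1$ from $y_k\mapsto y_k^{-1}$ that you worry about becomes the $+t^{1+\ell_+(\widetilde{m}_+)}$ term of $N^0(t)$ once moved to the left-hand side).
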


\begin{cor}\label{cor:T_NZ_data}
For a fully-mutating mutation loop $\phi$ such that $r=|B|=|W|$, we have
\begin{align*}
    T(t^\ell)|_{\Fix_\phi^\ast} = t^{\ell |I|}\det(A^+(t^{-1}) \mathbf{z}_- + A^-(t^{-1}) \mathbf{z}_+) \in \cO(\Fix_\phi^\ast).
\end{align*}
\end{cor}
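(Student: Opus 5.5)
The corollary follows from Mizuno's determinant formula \eqref{eq:Mizuno_det} by substituting $t \mapsto t^{-1}$ and homogenizing. The determinant is over $|I|\times|I|$ matrices, and $T(t)=\det(t-d\phi_\bullet)$ is the characteristic polynomial of the Jacobian.

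\textbf{Step 1: identity on points.} Fix $x \in \Fix_\phi^\ast(\overline{\bQ})$. Since $(d\phi)_x$ is an $|I|\times|I|$ matrix,
\begin{align*}
    \det(t^\ell-(d\phi)_x) = t^{\ell|I|}\det(1-t^{-\ell}(d\phi)_x).
\end{align*}
This holds as an identity in $\overline{\bQ}[t^{\pm1}]$. Apply \eqref{eq:Mizuno_det} with $t$ replaced by $t^{-1}$; this is legitimate because \eqref{eq:Mizuno_det} is an identity of Laurent polynomials in $t$, so substitution is allowed. It gives
\begin{align*}
    \det(1-t^{-\ell}(d\phi)_x) = \det\big(A^+(t^{-1})\mathbf{z}_-(x)+A^-(t^{-1})\mathbf{z}_+(x)\big).
\end{align*}
Combining the two, the claimed identity holds after evaluation at every $\overline{\bQ}$-point $x$.

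\textbf{Step 2: from points to functions.} We must pass from pointwise equality to equality in $\cO(\Fix_\phi^\ast)[t^{\pm1}]$. Both sides are elements of $\cO(\Fix_\phi^\ast)[t^{\pm 1}]$:
\begin{itemize}
    \item The left-hand side is the restriction of $T(t^\ell)\in R_\phi[t]$.
    \item On the right, $\mathbf{z}_\pm$ are the shape parameters $\varphi^\ast z_{m,\pm}$, which are regular on $\Fix_\phi^\ast$ by \cref{thm:fix_glue}.
\end{itemize}
Compare coefficients of each power of $t$. Two regular functions on $\Fix_\phi^\ast$ that agree at all $\overline{\bQ}$-points are equal, provided $\cO(\Fix_\phi^\ast)$ is reduced and its points are detected by the $\overline{\bQ}$-points.

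In the non-degenerate setting this holds. By \cref{lem:fixed_point_Artinian}, $R_\phi$ is a finite product of number fields. The open subscheme $\Fix_\phi^\ast$ is then a union of some of those factors. Finally, \eqref{eq:Artin_canonical_barQ} shows that an element vanishing at all $\overline{\bQ}$-points is zero.

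\textbf{Main obstacle.} The main subtlety is Step 2 when $\phi$ is not assumed non-degenerate. In that case one needs that $\Fix_\phi^\ast$ is reduced, or one must state the identity only at the level of points. I would handle this by first noting that the identity is intended under the standing non-degeneracy assumption. If not, I would argue via the isomorphism $\varphi$ with $\mathrm{Glue}_\phi$ from \cref{thm:fix_glue} (fully-mutating case). In that approach, Mizuno's formula is really a polynomial identity in the coordinates $z_{m,\pm}$ derived from \cref{prop:Jacobian}. It therefore holds universally in $\cO(\mathrm{Glue}_\phi)$, and hence in $\cO(\Fix_\phi^\ast)$, without any reducedness assumption.

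The remaining bookkeeping is routine:
\begin{itemize}
    \item The power $t^{\ell|I|}$ comes from the homogenization in Step 1.
    \item The right-hand side is a polynomial in $t$, even though it involves $A^\pm(t^{-1})$, because the left-hand side is.
\end{itemize}
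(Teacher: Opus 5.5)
Your proposal is correct and follows the same route the paper takes implicitly: the corollary is Mizuno's formula \eqref{eq:Mizuno_det} with $t$ replaced by $t^{-1}$, combined with $\det(t^\ell-(d\phi)_x)=t^{\ell|I|}\det(1-t^{-\ell}(d\phi)_x)$ for the $|I|\times|I|$ Jacobian. Your Step 2 goes from agreement at $\overline{\bQ}$-points to an identity in $\cO(\Fix_\phi^\ast)$, using reducedness from \cref{lem:fixed_point_Artinian} and \eqref{eq:Artin_canonical_barQ}; the paper leaves this unstated, and your argument for it is sound in the non-degenerate setting.
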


\begin{rem}
The formula \eqref{eq:Mizuno_det}
holds without the Cartan-like assumption in \cite{Mizuno:thesis_unpub}. Note that the original formula in \cite[Proposition 3.4.3]{Mizuno:thesis_unpub} is recovered by using symmetry for $t \leftrightarrow t^{-1}$ for the Cartan-like case. The proof is valid in the general case without essential modification: the square matrices $\widetilde{L}$, $\widetilde{H}$ of size $\ell|I|$ are defined in the same way, and we have $\det(1-t\widetilde{L})=\det(1-t^\ell(d\phi)_x)$. The matrix $\widetilde{H}$ is nilpotent by the fully-mutating condition, and $\det(1-t\widetilde{H})=1$. The matrix $(1-t\widetilde{H})^{-1}(1-t\widetilde{L})$ is the identity matrix except for the columns corresponding to the mutation indices, and its submatrix corresponding to the mutation indices coincides with $A^+(t) \mathbf{z}_-(x) + A^-(t) \mathbf{z}_+(x)$. 
\end{rem}

\section{\texorpdfstring{$(1+1)$}{(1+1)}-dimensional interpretation}\label{sec:Frobenius}
Recall the equivalence between the $(1+1)$-dimensional TQFT and commutative Frobenius algebras \cite{Abrams,Kock}. A \emph{Frobenius algebra} over a field $k$ is a finite-dimensional $k$-algebra $R$ equipped with a linear form $\ve: R \to k$ (called the \emph{Frobenius form}) whose kernel contains no non-trivial ideals \cite[Definition 2.2.1]{Kock}. Equivalently, it is characterized by a non-degenerate pairing \cite[2.2.3]{Kock}. 
In this section, we introduce two Frobenius structures on the fixed point algebra $R_\phi=\cO(\Fix_\phi)$ and provide a $(1+1)$-dimensional insight to our indices $\Ind_g^\phi$. 

\subsection{Frobenius structure induced by $\phi$}\label{subsec:Frob_T}
Let $\phi$ be a non-degenerate mutation loop, and consider the commutative algebra $R_\phi^{\overline{\bQ}}:=R_\phi \otimes_\bQ \overline{\bQ}$. Recall from \eqref{eq:Artin_canonical_barQ} the canonical decomposition
\begin{align*}
    R_\phi^{\overline{\bQ}} \cong \prod_{x \in \Fix_\phi(\overline{\bQ})} \overline{\bQ} e_{x}.
\end{align*}
Given an element $F \in R_\phi^{\overline{\bQ}}$, denote by $F=(F(x))_{x \in \Fix_\phi(\overline{\bQ})}$ its decomposition with respect to the canonical decomposition. 
Recall the element $T=\det(1-d\phi_{\bullet}) \in R_\phi$.

\begin{dfn}
Define a linear form $\ve_\phi: R_\phi^{\overline{\bQ}} \to \overline{\bQ}$ by
\begin{align*}
    \ve_\phi(F):= \Tr_{R_\phi^{\overline{\bQ}}/{\overline{\bQ}}}(m_{F/T}) = \sum_{x \in \Fix_\phi(\overline{\bQ})}\frac{F(x)}{T(x)}
\end{align*}
for $F \in R_\phi^{\overline{\bQ}}$.
\end{dfn}

\begin{lem}\label{lem:Frobenius_T}
$\ve_\phi$ is a Frobenius form. In particular, $(R_\phi^{\overline{\bQ}}, \ve_\phi)$ is a commutative Frobenius algebra over $\overline{\bQ}$.
\end{lem}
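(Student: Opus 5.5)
The plan is to work directly in the canonical decomposition \eqref{eq:Artin_canonical_barQ}, $R_\phi^{\overline{\bQ}} \cong \prod_{x \in \Fix_\phi(\overline{\bQ})} \overline{\bQ} e_x$, which holds because $\phi$ is non-degenerate (\cref{lem:fixed_point_Artinian}). First, we check that $\ve_\phi$ is well-defined. By non-degeneracy, $T(x)=\det(1-(d\phi)_x)\neq 0$ for every $x \in \Fix_\phi(\overline{\bQ})$. Hence $T=\sum_x T(x)e_x$ is a unit in $R_\phi^{\overline{\bQ}}$, with inverse $T^{-1}=\sum_x T(x)^{-1}e_x$. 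So $F/T := FT^{-1}$ makes sense. The trace of multiplication by $FT^{-1}$ is diagonal in the basis $\{e_x\}$, with eigenvalues $F(x)/T(x)$. This justifies the second equality in the definition, and also shows that $\ve_\phi$ is $\overline{\bQ}$-linear.

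Second, we verify the Frobenius condition: $\ker \ve_\phi$ contains no nonzero ideal. Every ideal $J$ of a finite product of fields $\prod_x \overline{\bQ}e_x$ has the form $\bigoplus_{x \in S}\overline{\bQ}e_x$ for some subset $S$. Indeed, if $F \in J$ has $F(x)\neq 0$, then $e_x = F(x)^{-1}e_xF \in J$. So if $J\neq 0$, it contains some $e_x$, and
\begin{align*}
\ve_\phi(e_x)=\frac{1}{T(x)}\neq 0.
\end{align*}
Thus $J\not\subset\ker\ve_\phi$. Equivalently, the pairing $(F,G)\mapsto \ve_\phi(FG)$ is represented in the orthogonal basis $\{e_x\}$ by the diagonal matrix $\mathrm{diag}(T(x)^{-1})$. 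This matrix is invertible, so the pairing is non-degenerate, matching the characterization in \cite[2.2.3]{Kock}. Finite-dimensionality holds because $\dim_{\overline{\bQ}} R_\phi^{\overline{\bQ}}=|\Fix_\phi(\overline{\bQ})|$ is finite, and commutativity is clear. Together these give the commutative Frobenius algebra structure.

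There is no real obstacle here. The only point needing care is the first step: invertibility of $T$, which is exactly where non-degeneracy enters. The rest is the standard fact that a twisted trace $\Tr(u\,\cdot)$ with $u$ a unit is a Frobenius form on a separable (étale) algebra.
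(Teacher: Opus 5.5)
Your proposal is correct and essentially matches the paper's proof: both work in the idempotent decomposition and use non-degeneracy ($T(x)\neq 0$) to see that the pairing $(F,G)\mapsto \ve_\phi(FG)$ is diagonal with entries $T(x)^{-1}$. The paper then invokes Kock's Lemma 2.2.4, whereas you also check the definition directly: every nonzero ideal contains some $e_x$, and $\ve_\phi(e_x)=T(x)^{-1}\neq 0$; your explicit observation that $T$ is a unit, so that $\ve_\phi$ is well defined, is a point the paper leaves implicit.
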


\begin{proof}
Consider the bilinear form
\begin{align*}
    (F|G)_\phi := \sum_{x \in \Fix_\phi(\overline{\bQ})}\frac{F(x)G(x)}{T(x)}
\end{align*}
for $F,G \in R_\phi^{\overline{\bQ}}$. Then $\ve_\phi(F)=(F|1)_\phi$. Moreover, the pairing is non-degenerate. Indeed, in the basis of idempotents, we have $(e_{x}|e_{y})_\phi=\delta_{xy} T(x)^{-1}$, which shows that the dual basis of $e_{x}$ is $e_{x}^\ast:=T(x)e_{x}$. Therefore, the assertion follows from \cite[Lemma 2.2.4]{Kock}.
\end{proof}
The non-degenerate bilinear form $(-|-)_\phi$ induces an isomorphism $R_\phi^{\overline{\bQ}} \cong (R_\phi^{\overline{\bQ}})^\ast$.

\begin{lem}\label{lem:handle_T}
The element $T \in R_\phi^{\overline{\bQ}}$ gives the handle element \cite[Section 3.3]{Abrams} in the Frobenius algebra $(R_\phi^{\overline{\bQ}},\ve_\phi)$.
\end{lem}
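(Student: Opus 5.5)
The handle element in Abrams' sense is $\omega:=\mu\circ\Delta(1)$, where $\Delta: R\to R\otimes R$ is the comultiplication dual to the multiplication under the Frobenius pairing. If $\{a_i\}$ is a basis and $\{a_i^\ast\}$ is its dual basis with respect to $(-|-)$, then $\Delta(1)=\sum_i a_i\otimes a_i^\ast$ and $\omega=\sum_i a_i a_i^\ast$. So the plan is to compute this sum for the pairing $(-|-)_\phi$ in a basis where everything is explicit, and then to check that the resulting element is independent of the basis chosen.

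\textbf{Step 1 (idempotent basis).} I would use the basis $\{e_x\}_{x\in\Fix_\phi(\overline{\bQ})}$ of $R_\phi^{\overline{\bQ}}$ coming from \eqref{eq:Artin_canonical_barQ}. The proof of \cref{lem:Frobenius_T} already gives its dual basis: $e_x^\ast=T(x)e_x$. It follows that
\begin{align*}
    \Delta(1)=\sum_{x} e_x\otimes T(x)e_x, \qquad \omega=\sum_x T(x)\, e_x e_x=\sum_x T(x)e_x .
\end{align*}
The last expression is exactly the decomposition of $T$ under the canonical decomposition, so $\omega=T$.

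\textbf{Step 2 (basis independence).} The element $\sum_i a_i\otimes a_i^\ast$ is the canonical element of $R\otimes R$ that corresponds to $\mathrm{id}_R$ under the isomorphism $R\otimes R\cong R\otimes R^\ast\cong\End(R)$ induced by the non-degenerate pairing. It therefore does not depend on the choice of basis, and so $\omega=\mu(\Delta(1))$ is well defined. I would state this briefly, citing \cite{Abrams} or \cite{Kock} for the description of $\Delta$ in terms of a dual basis.

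\textbf{Step 3 (consistency check).} As a check, and as the motivation for the TQFT statement that follows, note that $Z(\Sigma_g)=\ve_\phi(\omega^g)=\sum_x T(x)^g/T(x)=\Ind_g^\phi$, which matches \cref{def:index}.

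There is no serious obstacle in this argument. The only point needing care is the convention for $\Delta$ (its counit is $\ve_\phi$), which ensures that $\Delta(1)$ is the dual-basis sum for the pairing $(F|G)_\phi=\ve_\phi(FG)$ and not for the trace pairing. With that convention fixed, the computation in Step 1 is immediate.
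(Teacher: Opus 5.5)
Your proposal is correct and matches the paper's proof: it also computes $h=m\circ\Delta(1)=\sum_x e_x e_x^\ast$ in the idempotent basis, uses the dual basis $e_x^\ast=T(x)e_x$ from \cref{lem:Frobenius_T}, and obtains $\sum_x T(x)e_x=T$. Your basis-independence remark and the check $\ve_\phi(T^g)=\Ind_g^\phi$ (which is \cref{thm:TQFT}) are harmless additions.
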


\begin{proof}
The handle element $h$ in a Frobenius algebra $(R,\ve)$ is defined to be
\begin{align*}
    h:=m \circ \Delta (1) \in R,
\end{align*}
where $m: R \otimes R \to R$ is the multiplication, and $\Delta:R \to R \otimes R$ is the comultiplication induced by the duality $R \cong R^\ast$ under the bilinear pairing. In our Frobenius algebra $(R_\phi^{\overline{\bQ}},\ve_\phi)$, it is computed as follows:
\begin{align*}
    h = m \circ \Delta(1) = m \bigg(\sum_{x \in \Fix_\phi} e_{x}\otimes e_{x}^\ast \bigg) = \sum_{x \in \Fix_\phi} e_{x} e_{x}^\ast = \sum_{x \in \Fix_\phi} e_{x} T(x) = T.
\end{align*}
\end{proof}
In the $(1+1)$-dimensional picture, the multiplication of the handle element $h \in R$ corresponds to attaching a torus with two boundary components \cite[Figure 8]{Abrams}, hence the name. We have the genus-$g$ correlator
\begin{align*}
    \langle F \rangle_g:=\ve (h^g F)
\end{align*}
for $F \in R$, 
and the genus-$g$ partition function
\begin{align*}
    Z(\Sigma_g) := \langle 1 \rangle_g.
\end{align*}
The following gives a $(1+1)$-dimensional interpretation of our indices $\Ind_g^\phi$:

\begin{thm}\label{thm:TQFT}
We have 
\begin{align*}
    Z(\Sigma_g) = \Ind_g^\phi
\end{align*}
in the Frobenius algebra $(R_\phi^{\overline{\bQ}},\ve_\phi)$.
\end{thm}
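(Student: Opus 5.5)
The plan is to compute both sides directly in the idempotent basis of the canonical decomposition \eqref{eq:Artin_canonical_barQ}, $R_\phi^{\overline{\bQ}} \cong \prod_{x} \overline{\bQ} e_x$. By \cref{lem:handle_T}, the handle element is $h = T$. The genus-$g$ partition function is therefore
\begin{align*}
    Z(\Sigma_g) = \langle 1 \rangle_g = \ve_\phi(T^g).
\end{align*}

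The first step uses the description of multiplication in the idempotent basis. Since multiplication is componentwise, $T^g = \sum_x T(x)^g e_x$, where $T(x) = \det(1-(d\phi)_x)$. By non-degeneracy (\cref{def:non-deg}), each $T(x)$ is nonzero, so the element $T^g/T$ makes sense in $R_\phi^{\overline{\bQ}}$. Applying the definition of $\ve_\phi$ then gives
\begin{align*}
    \ve_\phi(T^g) = \sum_{x \in \Fix_\phi(\overline{\bQ})} \frac{T(x)^g}{T(x)} = \sum_{x \in \Fix_\phi(\overline{\bQ})} \det(1-(d\phi)_x)^{g-1}.
\end{align*}
This is exactly $\Ind_g^\phi$ by \cref{def:index}.

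As an alternative closing step, we can note that $\ve_\phi(T^g) = \Tr_{R_\phi^{\overline{\bQ}}/\overline{\bQ}}(T^{g-1})$. This trace agrees with $\Tr_{R_\phi/\bQ}(T^{g-1})$ because the trace is invariant under base change. The identity then follows from \cref{prop:index_trace}.

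There is no substantial obstacle here. The only points needing care are that the case $g=0$ involves $T^{-1}$, which exists because $T$ is a unit (non-degeneracy), and that the conventions for $h^g$ and $\ve$ match those of \cite{Abrams}, namely $Z(\Sigma_g) = \ve(h^g)$ with $h = m\circ\Delta(1)$. These conventions were already fixed in \cref{lem:handle_T}.
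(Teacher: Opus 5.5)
Your proposal is correct and follows essentially the same route as the paper: identify the handle element with $T$ via \cref{lem:handle_T}, then evaluate $\ve_\phi(T^g)=\sum_{x} T(x)^{g-1}$ in the idempotent basis and recognize $\Ind_g^\phi$. Your alternative closing via $\Tr_{R_\phi/\bQ}(T^{g-1})$ and \cref{prop:index_trace} is also valid.
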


\begin{proof}
The correlators are given by
\begin{align*}
    \langle F \rangle_g = \ve_\phi(T^g F) = \sum_{x \in \Fix_\phi} T(x)^{g-1} F(x)
\end{align*}
for $F \in R_\phi^{\overline{\bQ}}$, and hence the partition functions are
\begin{align*}
    Z(\Sigma_g)= \langle 1 \rangle_g = \sum_{x \in \Fix_\phi} T(x)^{g-1} = \Ind_g^\phi.
\end{align*}
\end{proof}

\subsection{Residual Frobenius structure}\label{subsec:residue}
Following Siebert--Tian \cite[Section 4]{SiebertTian}, we discuss another Frobenius structure on $R_\phi^\bC:=R_\phi \otimes_\bQ \bC$ that depends on its presentation as an Artinian complete intersection. \cref{prop:SiebertTian} below will be useful for the computation of $\Ind_0^\phi$. 

Let $\psi_1,\dots,\psi_k \in \bC[u_1,\dots,u_k]$ such that the holomorphic map $\psi:=(\psi_1,\dots,\psi_k): \bC^k \to \bC^k$ has only finitely many simple zeros. This is equivalent to saying that $R:=\bC[u_1,\dots,u_k]/(\psi_1,\dots,\psi_k)$ is reduced Artinian. 

For $F \in \bC[u_1,\dots,u_k]$ and $x \in \psi^{-1}(0)$, consider the analytic residue
\begin{align*}
    \mathrm{res}_\psi(x;F):=\frac{1}{(2\pi i)^k} \int_{\Gamma^\varepsilon(x)} \frac{F}{\psi_1 \dots \psi_k}du_1\dots du_k,
\end{align*}
where $\Gamma^\varepsilon(x):=\{ z \in U(x) \mid |\psi_i(z)|=\varepsilon\}$ with sufficiently small neighborhood $U(x) \subset \bC^k$ such that $U(x) \cap \psi^{-1}(0)=\{x\}$, and $\varepsilon>0$ is sufficiently small so that $\Gamma^\varepsilon (x)\Subset U(x)$. It is oriented by $d(\mathrm{arg}\psi_1)\wedge\dots\wedge d(\mathrm{arg}\psi_k)|_{\Gamma^\varepsilon(x)}$. Then, the total residue
\begin{align*}
    \mathrm{Res}_\psi(F):= \sum_{x \in \psi^{-1}(0)}\mathrm{res}_\psi(F)
\end{align*}
provides a Grothendieck residue $\mathrm{Res}_\psi: R \to \bC$ in the context of Gorenstein rings. Using the Jacobian $J:=\det(\partial \psi_i/\partial u_j) \in R$, we obtain 
\begin{align*}
    \mathrm{Res}_\psi(F) = \sum_{x \in \psi^{-1}(0)} \frac{F(x)}{J(x)}
\end{align*}
by residue computation after the change of variables $v_i:=\psi_i(u_1,\dots,u_k)$. 

\begin{lem}[cf. {\cite[2.3.7]{Kock}}]\label{lem:Frobenius_J}
$\mathrm{Res}_\psi: R \to \bC$ is a Frobenius form. 
In particular, $(R,\mathrm{Res}_\psi)$ is a commutative Frobenius algebra over $\bC$.
\end{lem}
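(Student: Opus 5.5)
The plan is to imitate the proof of \cref{lem:Frobenius_T}: write $\mathrm{Res}_\psi$ as the pairing against the idempotent basis and check that this pairing is non-degenerate.

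First, since $R$ is reduced Artinian with the simple zeros $\psi^{-1}(0)$ as its points, the Chinese remainder theorem gives the canonical decomposition
\begin{align*}
    R \cong \prod_{x \in \psi^{-1}(0)} \bC e_x ,
\end{align*}
where $e_x$ are orthogonal idempotents and $F \mapsto (F(x))_x$. This is the same decomposition as \eqref{eq:Artin_canonical_barQ}, now over $\bC$.

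Next, I need $J(x)\neq 0$ for every $x\in\psi^{-1}(0)$. Simplicity of a zero means exactly that the Jacobian $\det(\partial\psi_i/\partial u_j)$ does not vanish there. Equivalently, the Zariski tangent space of $R$ at $x$ is $\ker(d\psi)_x = 0$, as in the proof of \cref{lem:fixed_point_Artinian}. Hence $J$ is a unit of $R$, and the residue formula
\begin{align*}
    \mathrm{Res}_\psi(F)=\sum_x \frac{F(x)}{J(x)}
\end{align*}
is well defined on $R$. The formula also shows that $\mathrm{Res}_\psi$ kills the ideal $(\psi_1,\dots,\psi_k)$, since every element of that ideal vanishes at every zero. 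I will take this formula as given from the preceding discussion. The only subtle point would be justifying the local change of variables $v_i=\psi_i$ near each simple zero, which is the inverse function theorem applied because $J(x)\neq0$.

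Then I will define the pairing $(F|G):=\mathrm{Res}_\psi(FG)$ on $R$. In the idempotent basis it is diagonal:
\begin{align*}
    (e_x|e_y)=\delta_{xy}J(x)^{-1}.
\end{align*}
These entries are nonzero, so the pairing is non-degenerate, with dual basis $e_x^\ast=J(x)e_x$. By \cite[Lemma 2.2.4]{Kock}, $\mathrm{Res}_\psi$ is therefore a Frobenius form.

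As a direct alternative, suppose an ideal $\mathfrak a\subset\ker\mathrm{Res}_\psi$ contained some $F$ with $F(x)\neq0$. Then $e_xF\in\mathfrak a$ would satisfy $\mathrm{Res}_\psi(e_xF)=F(x)/J(x)\neq0$, a contradiction, so every such ideal is zero.

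There is no real obstacle here. The one step needing care is the non-vanishing of $J$ at the zeros, which comes straight from simplicity of the zeros (reducedness of $R$).
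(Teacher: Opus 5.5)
Your proposal is correct and takes the same route as the paper. The paper's proof just says the lemma follows as in \cref{lem:Frobenius_T} from the non-degeneracy of the pairing $(F|G)_\psi=\sum_x \mathrm{res}_\psi(FG)$. You supply the details it leaves implicit: the idempotent decomposition, the non-vanishing of $J(x)$ at simple zeros, the diagonal entries $J(x)^{-1}$, and the appeal to \cite[Lemma 2.2.4]{Kock}.
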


\begin{proof}
Follows similarly to the proof of \cref{lem:Frobenius_T} by the non-degeneracy of the bilinear pairing
\begin{align*}
    (F|G)_\psi := \sum_{x \in \psi^{-1}(0)} \mathrm{res}_\psi(FG).
\end{align*}
\end{proof}
By a similar argument to \cref{lem:handle_T}, we see that $J$ gives the handle element in the Frobenius algebra $(R,\mathrm{Res}_\psi)$.


\paragraph{\textbf{Weighted Gr\"obner basis}}
Fix a vector $d=(d_1,\dots,d_k) \in \bZ_{>0}^k$, define
\begin{align*}
    \deg_d (u_1^{a_1} \dots u_k^{a_k}) := \sum_{i=1}^k d_i a_i.
\end{align*}
for $f \in \bC[u_1,\dots,u_k]$. Let $\mathrm{in}_d(f)$ denote the highest degree homogeneous part in $f$ (where $\mathrm{in}_d$ stands for ``initial''). 
For an ideal $I \subset \bC[u_1,\dots,u_k]$, we call
\begin{align*}
    \mathrm{in}_d(I):=(\mathrm{in}_d(f) \mid f \in I)
\end{align*}
the \emph{initial ideal}. We call a finite subset $\{\psi_1,\dots,\psi_r\} \subset I$ a \emph{Gr\"obner basis} if $\mathrm{in}_d(I)=(\mathrm{in}_d(\psi_1),\dots,\mathrm{in}_d(\psi_r))$. The initial forms are related to zeros at infinity  as follows. 

Let $\bP^{k}_{(1,d)}$ denote the weighted projective space with degree $d$. For $f \in \bC[u_1,\dots,u_k]$, consider the homogenization
\begin{align*}
    \widetilde{f}(u_0,u_1,\dots,u_k):=u_0^{\deg_d f} f(u_0^{-d_1}u_1,\dots, u_0^{-d_k}u_k).
\end{align*}
Then we have
\begin{align*}
    \widetilde{f}(1,u)=f(u), \quad \widetilde{f}(0,u)=(\mathrm{in}_d(f))(u),
\end{align*}
where the latter means that $(\mathrm{in}_d(f))(u)$ equals the restriction of $\widetilde{f}$ to the hyperplane $\{u_0=0\}$ at infinity of $\bP^{k}_{(1,d)}$. Therefore, the polynomials $\psi_1,\dots,\psi_k$ have no common zeros on the hyperplane $\{u_0=0\}$ at infinity if and only if
\begin{align*}
    V(\mathrm{in}_d(\psi_1),\dots, \mathrm{in}_d(\psi_k))=\{0\} \subset \bA^k.
\end{align*}
Having this geometric intuition, we will use the following:

\begin{prop}[{\cite[Proposition 4.1]{SiebertTian}}]\label{prop:SiebertTian}
Let $R=\bC[u_1,\dots,u_k]/(\psi_1,\dots,\psi_k)$ be a reduced Artinian complete intersection with $\{\psi_1,\dots,\psi_k\}$ a Gr\"obner basis of the defining ideal with respect to a vector $d=(d_1,\dots,d_k)$. Let $N:=\sum_{i=1}^k \deg_d(\psi_i) -\sum_{i=1}^k d_i$, 
\begin{align*}
    R_{<N} := \pi(\{ F \in \bC[u_1,\dots,u_k] \mid \deg_d(F)<N\})
\end{align*}
the image under the canonical projection $\pi: \bC[u_1,\dots,u_k] \to R$, 
and $J:=\det(\partial \psi_i/\partial u_j) \in R$. Then we have 
\begin{align*}
    R= R_{<N} \oplus \bC J,
\end{align*}
and the commutative diagram
\begin{equation*}
\begin{tikzcd}
    \bC[u_1,\dots,u_k] \ar[r,"\mathrm{Res}_\psi"] \ar[d,"\pi"'] & \bC \\
    R \ar[r,"\mathrm{pr}_2"'] & \bC J, \ar[u]
\end{tikzcd}
\end{equation*}
where $\bC J \to \bC$, $J \mapsto \dim_\bC R$.
\end{prop}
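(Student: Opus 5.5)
The plan is the standard argument via the associated graded ring for the weighted degree filtration, combined with the local duality/transformation law for Grothendieck residues. We treat the two assertions separately: first the direct sum decomposition $R=R_{<N}\oplus \bC J$, then the description of $\mathrm{Res}_\psi$.

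\emph{Step 1: the graded model.} Put $f_i:=\mathrm{in}_d(\psi_i)$ and $G:=\bC[u_1,\dots,u_k]/(f_1,\dots,f_k)$. Since $\{\psi_i\}$ is a Gr\"obner basis, $\mathrm{in}_d(I)=(f_1,\dots,f_k)$. Hence the associated graded ring of $R$ for the filtration $R_{\le n}:=\pi(\{\deg_d\le n\})$ is isomorphic to $G$. Because $\dim_\bC R<\infty$, the quotient $G$ is finite-dimensional. So $f_1,\dots,f_k$ form a homogeneous regular sequence (a system of parameters in a graded polynomial ring), and $G$ is a graded Artinian complete intersection. Its Hilbert series is $\prod_i(1-q^{\deg_d\psi_i})/\prod_i(1-q^{d_i})$. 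This is a palindromic polynomial of top degree $N$, with $\dim G_N=1$ and $G_n=0$ for $n>N$. The socle $G_N$ is spanned by the class of $\det(\partial f_i/\partial u_j)=\mathrm{in}_d(J)$, which is nonzero in $G_N$ by the classical Euler/Scheja--Storch fact that the Jacobian of a homogeneous regular sequence generates the socle in characteristic $0$. Transporting through the filtration gives $\dim R/R_{<N}=\dim G_N=1$, with $J\notin R_{<N}$ because its initial form is nonzero in $G_N$. This yields $R=R_{<N}\oplus\bC J$.

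\emph{Step 2: the residue kills $R_{<N}$.} Take $F$ with $\deg_d F<N$. We want $\mathrm{Res}_\psi(F)=0$, and we would argue via the global residue theorem at infinity. Write $\mathrm{Res}_\psi(F)$ as a residue integral over the cycle $\{|\psi_i|=\varepsilon_i\}$. Using the Gr\"obner (no zeros at infinity) condition, this cycle is homologous in the complement of $\bigcup_i\{\psi_i=0\}$ to a large cycle $\{|\psi_i|=R\}$. Substitute $u_i=s^{-d_i}v_i$ and let $s\to0$. The integrand then scales like $s^{N-\deg_d F}$ times a bounded form, which tends to $0$ when $\deg_d F<N$. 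An equivalent algebraic route is to use the weighted projective compactification $\bP^k_{(1,d)}$ and the global residue theorem for the divisors $\widetilde\psi_i$, which share no common zero on $\{u_0=0\}$: the form $F\,du/\prod\psi_i$ acquires a factor $u_0^{N-\deg_d F-1}$ there, so it is holomorphic at infinity and its residue sum vanishes.

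\emph{Step 3: normalization.} By the formula $\mathrm{Res}_\psi(F)=\sum_x F(x)/J(x)$ established before the statement, we get $\mathrm{Res}_\psi(J)=\sum_{x\in\psi^{-1}(0)}1=\dim_\bC R$, since $R$ is reduced. Combined with Steps 1 and 2, $\mathrm{Res}_\psi$ factors as $\mathrm{pr}_2$ followed by $J\mapsto\dim_\bC R$, which is the claimed commutative diagram.

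The main obstacle is Step 2: making the deformation to infinity rigorous. This needs a uniform lower bound $|\psi(u)|\gtrsim$ (weighted norm of $u$)$^{\deg}$ for large $u$, which should follow from $V(f_1,\dots,f_k)=\{0\}$ and quasi-homogeneity. It also needs care with the orientation and homotopy of the cycle. If this becomes delicate, I would instead cite the algebraic transformation law: express $\mathrm{Res}_\psi$ through the graded residue of $(f_i)$ on $G$, which is supported exactly in degree $N$.
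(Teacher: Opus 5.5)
Your proposal is correct and takes the same route as the paper's own justification. There, the vanishing of $\mathrm{Res}_\psi$ on $R_{<N}$ comes from the global residue theorem on $\bP^k_{(1,d)}$: the Gr\"obner hypothesis rules out common zeros at infinity, and the form $F\,du/\prod\psi_i$ picks up the factor $u_0^{N-1-\deg_d F}$. The normalization $\mathrm{Res}_\psi(J)=\#\psi^{-1}(0)=\dim_\bC R$ completes the diagram. Your Step 1 also supplies the decomposition $R=R_{<N}\oplus\bC J$, which the paper leaves to Siebert--Tian. The Scheja--Storch input there can be dropped: once Steps 2--3 give $\mathrm{Res}_\psi|_{R_{<N}}=0$ and $\mathrm{Res}_\psi(J)=\dim_\bC R\neq 0$, you get $J\notin R_{<N}$ for free, so you only need $\dim G_N=1$ and $G_{>N}=0$ from the Hilbert series.
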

In particular, the diagram shows
\begin{align*}
    \mathrm{Res}_\psi(F)=0
\end{align*}
for any $F \in R_{<N}$. This vanishing is a consequence of the residue theorem for the rational differential form $\displaystyle\frac{F}{\psi_1 \dots \psi_k}du_1 \dots du_k$ on $\bP^{k+1}_{(1,d)}$, where the Gr\"obner basis assumption ensures that there are no additional zeros at infinity. If $F=cJ$, then $\mathrm{Res}_\psi(F)=c \dim_\bC R$.

\begin{cor}\label{cor:SiebertTian}
Let $\phi$ be a non-degenerate mutation loop. If the Artinian ring $R_\phi^\bC$ has a presentation as in \cref{prop:SiebertTian}, then we have
\begin{align*}
    \Ind_g^\phi = \mathrm{Res}_\psi(JT^{g-1}).
\end{align*}
In particular, if $J/T \in (R_\phi^\bC)_{<N}$, then $\Ind_0^\phi=0$.
\end{cor}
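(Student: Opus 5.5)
We need to prove \cref{cor:SiebertTian}: under the presentation $R_\phi^\bC\cong \bC[u_1,\dots,u_k]/(\psi_1,\dots,\psi_k)$, we have $\Ind_g^\phi=\mathrm{Res}_\psi(JT^{g-1})$, and $\Ind_0^\phi=0$ whenever $J/T\in (R_\phi^\bC)_{<N}$.

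The plan is to compare the two Frobenius forms $\ve_\phi$ and $\mathrm{Res}_\psi$ on the same reduced Artinian algebra $R_\phi^\bC$, working pointwise via idempotents. First, fix the isomorphism $R_\phi^\bC\cong \bC[u_1,\dots,u_k]/(\psi_1,\dots,\psi_k)$. Since $R_\phi$ is reduced Artinian and finite étale over $\bQ$ (\cref{lem:fixed_point_Artinian}), this isomorphism identifies the zero set $\psi^{-1}(0)$ bijectively with $\Fix_\phi(\overline{\bQ})$; complex points are $\overline{\bQ}$-points by \cref{rem:complex_fixed_point}. Both $T$ and $J$ then become functions on this common finite set of points. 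Reducedness means all zeros of $\psi$ are simple, so $J(x)\neq 0$ at every zero. Non-degeneracy gives $T(x)\neq 0$ at every point. Hence $T$ and $J$ are units in $R_\phi^\bC$, and $J/T$ makes sense as an element of $R_\phi^\bC$.

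Next, we use the residue formula $\mathrm{Res}_\psi(F)=\sum_{x\in\psi^{-1}(0)}F(x)/J(x)$ recorded before \cref{lem:Frobenius_J}. This formula depends only on the class of $F$ in $R$, since each value $F(x)$ does. Applying it to $F=JT^{g-1}$, with $g\ge 0$ and $T^{-1}$ a unit when $g=0$, gives
\begin{align*}
\mathrm{Res}_\psi(JT^{g-1})=\sum_{x}J(x)T(x)^{g-1}/J(x)=\sum_x T(x)^{g-1}=\Ind_g^\phi .
\end{align*}
The last equality is \cref{def:index}, or equivalently \cref{prop:index_trace}. This step is just a matching of two sums over the same point set, and the only care needed is that the residue applied to elements of $R$ (rather than of the polynomial ring) is well defined. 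That follows by lifting to any polynomial representative, because the local residues depend only on values at simple zeros.

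For $g=0$, the above gives $\Ind_0^\phi=\mathrm{Res}_\psi(J/T)$. If $J/T\in (R_\phi^\bC)_{<N}$, choose a lift $F\in\bC[u_1,\dots,u_k]$ with $\deg_d F<N$ and $\pi(F)=J/T$. By the commutative diagram of \cref{prop:SiebertTian}, $\mathrm{Res}_\psi(F)$ equals the $\bC J$-component of $\pi(F)$ under $R=R_{<N}\oplus\bC J$, scaled by $\dim R$. Since $\pi(F)\in R_{<N}$, that component is zero, so $\Ind_0^\phi=0$.

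The only delicate point is ensuring that the residue as defined analytically on polynomials descends compatibly to $R$, so that it agrees with the diagram in \cref{prop:SiebertTian}. This is exactly the content of that proposition, so I would simply invoke it rather than reprove it.
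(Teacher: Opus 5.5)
Your proposal is correct and follows the same route as the paper. You evaluate $\mathrm{Res}_\psi(JT^{g-1})$ by the pointwise formula $\sum_{x\in\psi^{-1}(0)}F(x)/J(x)$, identify $\psi^{-1}(0)$ with $\Fix_\phi(\overline{\bQ})$ using the remark that complex fixed points are $\overline{\bQ}$-points, and read off $\Ind_g^\phi$; the extra details you spell out are exactly what the paper leaves implicit: that $T$ and $J$ are units so $J/T$ makes sense, that the residue descends to $R$, and that the $g=0$ vanishing comes from the zero $\bC J$-component in $R=R_{<N}\oplus\bC J$.
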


\begin{proof}
The right-hand side is computed as
\begin{align*}
    \mathrm{Res}_\psi(JT^{g-1}) = \sum_{x \in \psi^{-1}(0)} \frac{J(x)T(x)^{g-1}}{J(x)} = \sum_{x \in \Fix_\phi(\bC)} T^{g-1}(x) = \Ind_g^\phi.
\end{align*}
Here, we used the fact that all the complex $\phi$-fixed points are $\overline{\bQ}$-points (\cref{rem:complex_fixed_point}).
\end{proof}

\begin{rem}
\begin{enumerate}
    \item $\mathrm{Res}_\psi(J)=\dim_\bC R_\phi^\bC$ agrees with $\Ind_1^\phi=\dim_\bQ R_\phi$ (\cref{rem:I_1_dim}).
    \item When applying \cref{cor:SiebertTian}, it is necessary to have a polynomial presentation rather than a \emph{Laurent} polynomial presentation. We will give such a presentation in \cref{sec:example} for explicit examples.
\end{enumerate}
\end{rem}

\section{Examples}\label{sec:example}

\subsection{Mutation loops of period $1$}
Fix an integer $N \geq 2$ and a palindromic sequence $\mathbf{a}=(a_2,\dots,a_N)$ of integers: $a_j=a_{N-j+2}$. Define a skew-symmetric matrix $B=(b_{ij})_{i,j=1}^N$ inductively by
\begin{align*}
    b_{1j}=a_j, \quad b_{ij}=b_{i-1,j-1} + a_i[-a_j]_+ -a_j[-a_i]_+
\end{align*}
for $i,j=2,\dots,N$. Fordy and Marsh \cite{FM11} proved that $\phi:\bi=\bi^{(0)} \xrightarrow{\mu_1} \bi^{(1)} \xrightarrow{(N\cdots 2\ 1)} \bi^{(2)}$ is a mutation loop with initial seed $\bi=(B,(y_1,\dots,y_N))$, and this construction produces all the skew-symmetric matrices $B$ with this property. See \cite[Theorem 4.1]{Mizuno:thesis} for another proof of this classification in terms of the Neumann--Zagier matrices. Such $\phi$ are called the \emph{mutation loops of period $1$}. The induced birational map $\phi: (\bG_m)^N \dashrightarrow (\bG_m)^N$ is computed as
\begin{align}\label{eq:FM_y_action}
    \phi^\ast: \mtx{y_1 \\ y_2 \\ \vdots \\ y_N} \mapsto \mtx{y_2(1+y_1^{-\sgn a_2})^{-a_2} \\ \vdots \\ y_N(1+y_1^{-\sgn a_N})^{-a_N}\\ y_1^{-1}}.
\end{align}
Hence the fixed point equation reduces to 
\begin{align}\label{eq:FM_fixed_y}
    y^2 = \prod_{j=2}^N (1+y^{-\sgn a_j})^{-a_j}
\end{align}
with $y:=y_1$. If $\mathbf{a}\neq \mathbf{0}$, then $y$ cannot take the value $-1$. Therefore $\Fix_\phi =\Fix_\phi^\ast \cong \mathrm{Glue}_\phi$. 

The associated mutation (pre)networks $\widetilde{\cN}_\phi$, $\cN_\phi$ are obtained as follows:
\begin{align*}
\begin{tikzpicture}
\draw(0,0) circle(2pt) node[left=0.2em,scale=0.8]{$1$};
\draw(0,1) circle(2pt) node[left=0.2em,scale=0.8]{$2$};
\draw(0,2) circle(2pt) node[left=0.2em,scale=0.8]{$3$};
\draw(0,3) circle(2pt) node[left=0.2em,scale=0.8]{$4$};
\fill(1,0) circle(2pt) node[below]{$\mu_1$};
\draw(2,0) circle(2pt);
\draw[dashed,shorten >=2pt,shorten <=2pt] (0,0) -- (1,0);
\draw[dashed,shorten >=2pt,shorten <=2pt] (1,0) -- (2,0);
\qarrow{0,1}{1,0} node[midway,below]{$a_2$};
\qarrow{1,0}{0,2} node[pos=0.7,below]{$a_3$};
\qarrow{0,3}{1,0} node[pos=0.5,right]{$a_4$};
\draw(3,0) circle(2pt);
\draw(3,1) circle(2pt);
\draw(3,2) circle(2pt);
\draw(3,3) circle(2pt);
\draw[gray!60,very thick,shorten >=2pt,shorten <=2pt] (2,0) to[out=0,in=180] (3,3);
\draw[gray!60,very thick,shorten >=2pt,shorten <=2pt] (0,1) --++(2,0) to[out=0,in=180] (3,0);
\draw[gray!60,very thick,shorten >=2pt,shorten <=2pt] (0,2) --++(2,0) to[out=0,in=180] (3,1);
\draw[gray!60,very thick,shorten >=2pt,shorten <=2pt] (0,3) --++(2,0) to[out=0,in=180] (3,2);
\foreach \i in {1,2,3} \filldraw[fill=white] (2,\i) circle(2pt);
\node at (1.5,-1) {$\widetilde{\cN}_\phi$};
\begin{scope}[xshift=6cm,yshift=1.5cm]
\draw(0,0) circle(2pt);
\fill(1.5,0) circle(2pt) node[right=0.2em]{$\mu_1$};
\draw[dashed,shorten >=2pt,shorten <=2pt] (0,0) to[bend left=20] (1.5,0);
\draw[dashed,shorten >=2pt,shorten <=2pt] (0,0) to[bend right=20] (1.5,0);
\draw[qarrow,shorten >=2pt,shorten <=2pt] (0,0) to[bend left=55] node[midway,above,scale=0.8]{$\sum_{j=2}^N[-a_j]_+$} (1.5,0);
\draw[qarrow,shorten >=2pt,shorten <=2pt] (1.5,0) to[bend left=55] node[midway,below,scale=0.8]{$\sum_{j=2}^N[a_j]_+$} (0,0);
\node at (0.75,-2.5) {$\cN_\phi$};
\end{scope}
\end{tikzpicture}
\end{align*}
The graded adjacency and Neumann--Zagier matrices are size one, and computed as (cf. \cite[Theorem 4.1]{Mizuno:thesis})
\begin{align*}
    &N^0(t)=1+t^N, \quad N^+(t)=\sum_{j=2}^N [a_j]_+t^{j-1}, \quad N^-(t)=\sum_{j=2}^N [-a_j]_+t^{j-1},\\
    &A^+(t)=1-\sum_{j=2}^N[a_j]_+t^{j-1}+t^N,\quad A^-=1-\sum_{j=2}^N[-a_j]_+t^{j-1} + t^N.
\end{align*}
From their specialization to $t=1$, we get
\begin{align*}
    R_\phi \cong \cO(\mathrm{Glue}_\phi)&= \bQ[z_+^{\pm 1},z_-^{\pm 1}]/(z_+ + z_- -1, z_+^{-A^+}z_-^{A^-}-1) \\
    &\cong \bQ[z^{\pm 1}][(z-1)^{-1}]/(z^{-A^+}(1-z)^{A^-}-1).
\end{align*}
From \cref{cor:T_NZ_data}, we get
\begin{align}\label{eq:Kronecker_T}
    T(t)&= A^+(t) z_- + A^-(t) z_+ = A^+(t) + (A^-(t) -A^+(t))z.
\end{align}
Here, we used the symmetry $t^N A_\pm(t^{-1})=A_\pm(t)$ that follows from the palindromicity of $\mathbf{a}$. 

\begin{lem}\label{lem:FM_poly_presentation}
Assume $\mathbf{a}\neq \mathbf{0}$ and $(A^+,A^-) \neq (0,0)$. 
We have a polynomial presentation
\begin{align*}
    R_\phi \cong \bQ[z]/(f_\phi),
\end{align*}
where
\begin{align}\label{eq:FM_poly_presentation}
    f_\phi:=\begin{cases}
        z^{A^+}-(1-z)^{A^-} & \mbox{if $A^+,A^- > 0$}, \\
        z^{-A^+}-(1-z)^{-A^-} & \mbox{if $A^+,A^- < 0$}, \\
        z^{A^+}(1-z)^{-A^-}-1 & \mbox{if $A^+ > 0$ and $A^-< 0$}, \\
        z^{-A^+}(1-z)^{A^-}-1 & \mbox{if $A^+< 0$ and $A^-> 0$}, \\
        \sum_{j=0}^{|c|-1} z^j & \mbox{if $(A^+,A^-)=(c,0)$ with $c \neq 0$}, \\
        \sum_{j=0}^{|c|-1} (1-z)^j & \mbox{if $(A^+,A^-)=(0,c)$ with $c \neq 0$}. 
    \end{cases}
\end{align}
\end{lem}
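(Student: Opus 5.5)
Starting from the Laurent presentation derived just above,
\begin{align*}
R_\phi \cong \bQ[z^{\pm 1}][(z-1)^{-1}]/\big(z^{-A^+}(1-z)^{A^-}-1\big),
\end{align*}
the plan is to show that the localization can be removed: in each case, $z$ and $1-z$ become units in the quotient $\bQ[z]/(f_\phi)$. This uses the isomorphism $\Fix_\phi\cong\mathrm{Glue}_\phi$ from \cref{thm:fix_glue}, which applies because $\mathbf{a}\neq\mathbf{0}$, together with $z=z_-$ and $z_+=1-z$. Concretely, I will check in each case that $g:=z^{-A^+}(1-z)^{A^-}-1$ equals a unit of $\bQ[z^{\pm1},(1-z)^{-1}]$ times $f_\phi$. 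I will then show that $f_\phi$ has no root at $z=0$ or $z=1$. Given both facts, the natural map
\begin{align*}
\bQ[z]/(f_\phi)\to \bQ[z^{\pm1},(1-z)^{-1}]/(f_\phi)
\end{align*}
is an isomorphism, since inverting elements that are already units changes nothing.

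First comes the sign bookkeeping. Suppose $A^+,A^->0$. Multiplying $g$ by the unit $-z^{A^+}$ gives $z^{A^+}-(1-z)^{A^-}$. Suppose $A^+,A^-<0$. Multiplying by $(1-z)^{-A^-}$ gives $z^{-A^+}-(1-z)^{-A^-}$ up to sign. The mixed-sign cases are immediate, since there $g$ is already a polynomial up to a unit monomial, namely $z^{-A^+}(1-z)^{A^-}-1$, or its reciprocal form multiplied by a unit. In each of these four cases I evaluate $f_\phi$ at $z=0$ and $z=1$ and get $\pm1$, because each exponent is positive. Hence $f_\phi$ is coprime to $z(1-z)$, and in the quotient the Bézout relation exhibits $z$ and $1-z$ as units.

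The degenerate cases $A^-=0$, $A^+=c\neq0$ need a separate argument. Here the relation is $z^{-c}=1$, i.e.\ $z^{|c|}-1=0$ in the localized ring. Since $z-1$ is inverted, this is equivalent to $\sum_{j=0}^{|c|-1}z^j=0$. This polynomial takes the value $1$ at $z=0$ and the value $|c|\neq0$ at $z=1$, so it is coprime to $z(1-z)$ over $\bQ$. The case $A^+=0$, $A^-=c\neq0$ is symmetric under $z\leftrightarrow 1-z$, with $w=1-z$ satisfying $w^{|c|}=1$ and $w\neq1$ because $z$ is a unit.

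The main point of care is precisely this degenerate case. There one must use that $1-z$ is invertible, which is what lets us divide by $z-1$ (respectively by $z$), rather than invoking coprimality directly. I also expect to note that $\bQ$ has characteristic $0$, which guarantees $|c|\neq0$ in $\bQ$. In addition, $(A^+,A^-)\neq(0,0)$ is needed to exclude the trivial relation $1=1$.
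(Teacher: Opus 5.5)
Your argument is correct and supplies exactly the routine verification the paper leaves implicit, since the lemma is stated there without a written proof. You multiply $z^{-A^+}(1-z)^{A^-}-1$ by a unit of $\bQ[z^{\pm1},(1-z)^{-1}]$ to get $f_\phi$. You check $f_\phi(0),f_\phi(1)\neq 0$, so that $z$ and $1-z$ are already units modulo $f_\phi$. In the cases $A^+A^-=0$ you divide by the unit $z-1$ (resp.\ $-z$).

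One cosmetic slip: the Laurent presentation you start from corresponds to $z=z_+$ and $1-z=z_-$, which is also what \eqref{eq:Kronecker_T} requires, not $z=z_-$. Since you work directly with that presentation, this does not affect the proof.
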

\paragraph{\textbf{The case $\mathbf{a}=\mathbf{0}$:}} From \eqref{eq:FM_fixed_y}, the fixed point equation becomes $y^2=1$, which have solutions $y=\pm 1$. We have $\Fix_\phi^\ast(\overline{\bQ})=\{1\} \subsetneq \Fix_\phi(\overline{\bQ})$, where the solution $y=1$ corresponds to $z=1/2$. From \eqref{eq:FM_y_action}, we can directly compute the characteristic polynomial of the Jacobian, and hence obtain $T(t)=1+t^N$ at $y=\pm 1$. In particular, $\phi$ is non-degenerate and 
\begin{align*}
    \Ind_{g,t}^\phi=2(1+t^N)^{g-1}
\end{align*}
for all $g \geq 0$. This shows $\Ind_{g,t}^\phi \in \bZ[t]$ for $g \geq 1$. We also have $\Ind_0^\phi=1$, and hence $\Ind_g^\phi \in \bZ$ for all genera. 

\smallskip 

\paragraph{\textbf{The case $(A^+,A^-)=(0,0)$:}} The gluing variety is $1$-dimensional, and hence the indices are undefined.

\begin{rem}\label{rem:A_parity}
By the palindromicity of $\mathbf{a}$, it cannot happen that both $A^+$ and $A^-$ are odd. Indeed, if the length $N-1$ is even, then each number appears twice, and hence $A^+,A^-$ are even. Otherwise, the central number can only contribute to one of them, and hence at most one of them can be odd.
\end{rem}

\subsubsection{Integrality of the refined index for $g \geq 1$}

\begin{thm}\label{thm:Fordy_Marsh}
For any mutation loop $\phi$ with period $1$ associated with 
$(A^+,A^-) \neq (0,0)$, we have
$\Ind_g^\phi \in \bZ$ for all $g \geq 1$. If moreover $A^+ \neq A^-$, then $\Ind_{g,t}^\phi \in \bZ[t]$ for $g \geq 1$.
\end{thm}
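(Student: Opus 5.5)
The plan is to use the explicit description of period-one loops from \cref{lem:FM_poly_presentation} and \eqref{eq:Kronecker_T}, and then apply the integrality machinery of \cref{sec:integrality}. The case $\mathbf{a}=\mathbf{0}$ was already treated directly above, so I assume $\mathbf{a}\neq\mathbf{0}$. Then $\Fix_\phi=\Fix_\phi^\ast\cong\mathrm{Glue}_\phi$, so $R_\phi\cong\bQ[z]/(f_\phi)$, where $z=z_-$ is a shape parameter. Moreover
\begin{align*}
T(t)=A^+(t)+(A^-(t)-A^+(t))\,z \in R_\phi[t],
\end{align*}
with $A^\pm(t)\in\bZ[t]$. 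Non-degeneracy is taken as part of the setting: $R_\phi$ is reduced Artinian because $f_\phi$ should be separable.

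\textbf{Refined statement for $A^+\neq A^-$.} It suffices to show that $z\in\cO_{R_\phi}$. Then $z_+=1-z\in\cO_{R_\phi}$ as well, and both \cref{prop:shape_integral} and the direct route via \cref{lem:char_poly_integral} and \cref{lem:index_integral_refined} give $\Ind_{g,t}^\phi\in\bZ[t]$. To show $z$ is integral over $\bZ$, I would check case by case that $f_\phi\in\bZ[z]$ has leading coefficient $\pm1$, since $z$ is a root of $f_\phi$ in $R_\phi$.
\begin{itemize}
\item In the two mixed-sign cases, the leading term of $z^{a}(1-z)^{b}-1$ is $\pm z^{a+b}$.
\item In the cases $(c,0)$ and $(0,c)$, the polynomials $\sum_j z^j$ and $\sum_j(1-z)^j$ have leading coefficient $\pm1$.
\item In the same-sign cases, $f_\phi=z^{|A^+|}-(1-z)^{|A^-|}$. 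When $|A^+|\neq|A^-|$, the higher-degree term alone determines the leading coefficient, which is $\pm1$.
\end{itemize}
This is exactly where $A^+\neq A^-$ enters.

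\textbf{Unrefined statement when $A^+=A^-$.} At $t=1$ we have $A^\pm(1)=A^\pm$, so $T(1)=A^++(A^--A^+)z$. When $A^+=A^-$, this collapses to the integer constant $A^+$. Hence, by \cref{prop:index_trace} and \cref{rem:I_1_dim},
\begin{align*}
\Ind_g^\phi=\Tr_{R_\phi/\bQ}\big((A^+)^{g-1}\big)=(A^+)^{g-1}\dim_\bQ R_\phi\in\bZ
\end{align*}
for $g\geq1$. When $A^+\neq A^-$, integrality of $\Ind_g^\phi$ follows by specializing the refined statement at $t=1$.

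\textbf{Main obstacle.} The delicate point is the same-sign case with $|A^+|=|A^-|$. Here the leading terms of $f_\phi$ may cancel, and the new leading coefficient is $\pm A$, so $z$ need not be integral. By \cref{rem:A_parity} this $A$ is even, so cancellation does occur. I expect this is precisely why the refined integrality fails (cf.~\cref{ex:refined_non_integral}), and why the argument above must switch to the observation that $T(1)$ is constant. I would also double-check that the presentation in \cref{lem:FM_poly_presentation} really identifies $z$ with the shape parameter, including the sign conventions in the negative cases.
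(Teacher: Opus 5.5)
\textbf{The gap is non-degeneracy.} Your integrality arguments are the paper's. For $A^+\neq A^-$, $\pm f_\phi$ is monic in $\bZ[z]$, so the shape parameters lie in $\cO_{R_\phi}$ and \cref{prop:shape_integral} applies (equivalently \cref{lem:char_poly_integral} with \cref{lem:index_integral_refined}). For $A^+=A^-$ you have $T=A^+\in\bZ$. A harmless slip: the $z$ in \eqref{eq:Kronecker_T} is $z_+$, not $z_-$.

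You dismiss non-degeneracy as ``part of the setting'' because ``$f_\phi$ should be separable''. The theorem does not assume it. $\Ind_g^\phi$ is only defined for non-degenerate loops, and the statement asserts the index exists for every $(A^+,A^-)\neq(0,0)$; the case $(0,0)$ is excluded exactly because the index is undefined there. Both \cref{prop:shape_integral} and \cref{prop:index_trace} also take non-degeneracy as a hypothesis.

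Separability of $f_\phi$ is not a cheap independent fact either. The Zariski tangent space of $\Fix_\phi$ at $x$ is $\ker(1-(d\phi)_x)$, so reducedness of $R_\phi\cong\bQ[z]/(f_\phi)$ is equivalent to $T(x)\neq 0$ at every fixed point. Concretely, for $A^+>0>A^-$ one has $f_\phi'=z^{A^+-1}(1-z)^{-A^--1}\,T$. So ``should be separable'' assumes the one genuinely nontrivial step of the proof.

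\textbf{How to close it.} Argue case by case.

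If $A^+=A^-=c$, then $T=c\neq 0$ and there is nothing to do.

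If $(A^+,A^-)=(c,0)$ with $c\neq 0$, then $T=c\,z_-$, which is nonzero since $z_-$ is a unit. The case $(0,c)$ is symmetric.

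If $A^+A^-\neq 0$ and $A^+\neq A^-$, then $T=A^++(A^--A^+)z_+=0$ forces $z_+=A^+/(A^+-A^-)\in\bQ$. Substituting into the gluing equation $z_+^{A^+}=z_-^{A^-}$ gives
\begin{align*}
(A^+)^{A^+}(A^+-A^-)^{A^-}=\pm(A^-)^{A^-}(A^+-A^-)^{A^+}.
\end{align*}
This is impossible by \cref{lem:arithmetic}. Dividing $A^\pm$ by their gcd gives integers $x,y$ with $x,y,x-y$ pairwise coprime. Each appears with a nonzero exponent, since $A^+A^-(A^+-A^-)\neq 0$. Unique factorization then forces $x,y,x-y\in\{\pm 1\}$, which cannot happen.

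With this step added, your proof coincides with the paper's.
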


\begin{proof}
The case $\mathbf{a}= \mathbf{0}$ has been proved. Below we assume $\mathbf{a} \neq \mathbf{0}$ so that $\Fix_\phi =\Fix_\phi^\ast $ holds.

\smallskip 

\paragraph{\textbf{The case $A^+=A^- \neq 0$:}} We directly obtain $T=A^+ \in \bZ$ from \eqref{eq:Kronecker_T}, which shows $\Ind_g^\phi \in \bZ$ for all $g \geq 1$ by \cref{lem:index_integral}. See \cref{ex:refined_non_integral} for a non-integral example of refined indices. 

\smallskip 

\paragraph{\textbf{The case $A^+ \neq A^-$:}}
We verify the conditions in \cref{prop:shape_integral}.
To see the integrality of shape parameters,
note that $\pm f_\phi$ in \eqref{eq:FM_poly_presentation} provides a monic polynomial for $z$ with integral coefficients, where the highest terms do not cancel since $A^+ \neq A^-$. Therefore $z \in \cO_{R_\phi}$. 


To see the non-degeneracy, suppose first that $A^+A^-=0$. 
Then $(A^+,A^-)=(0,c)$ or $(c,0)$ with $c \neq 0$. 
The solutions to the gluing equation are explicitly given by $(z^+,z^-)=(1-z,z)$ and $(z,1-z)$, respectively, where $z^{|c|}=1$ and $z \neq 1$. 
We have $T_z=c(1-z) \neq 0$ at these points, and hence $\phi$ is non-degenerate. 

In the case $A^+A^- \neq 0$, 
we prove that the equations 
\begin{align*}
    z^{-A^+}(1-z)^{A^-} = 1, \quad A^+ + (A^- -A^+)z=0
\end{align*}
do not have a common solution. 
Solving the second equation, we get $z=A^+/(A^+-A^-)$ and $1-z=-A^-/(A^+-A^-)$. Then the first equation becomes
\begin{align*}
    \bigg(\frac{A^+}{A^+-A^-} \bigg)^{A^+} = \bigg(\frac{-A^-}{A^+-A^-} \bigg)^{A^-}.
\end{align*}
This equation has no solution by applying the elementary \cref{lem:arithmetic} below to $a:=A^+$, $b:=A^-$. Hence $\phi$ is non-degenerate. 

Therefore \cref{prop:shape_integral} proves $\Ind_{g,t}^\phi \in \bZ[t]$ for $g \geq 1$ when $A^+ \neq A^-$.


\end{proof}

\begin{lem}\label{lem:arithmetic}
There are no integers $a,b$ satisfying $ab(a-b)\neq 0$ and $a^a(a-b)^b=\pm b^b(a-b)^a$.
\end{lem}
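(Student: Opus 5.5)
We clear denominators and then compare prime factorizations. Exponents may be negative, so we read the equation as an identity in $\bQ^\ast$; taking absolute values loses nothing, because the sign $\pm$ is allowed. Put $c:=a-b$, so that $a=b+c$ and $abc\neq 0$. The equation $a^a c^b=\pm b^b c^a$ is equivalent to $a^a=\pm b^b c^{a-b}=\pm b^b c^c$. Hence it suffices to show that
\begin{align*}
    |a|^{a}=|b|^{b}\,|c|^{c}, \qquad a=b+c,\quad abc\neq 0,
\end{align*}
has no integer solutions.

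First we remove the common factor. Let $d:=\gcd(a,b)>0$ and write $a=da'$, $b=db'$, $c=dc'$, so that $a'=b'+c'$. Since $\gcd(a',b')=1$, the relation $a'=b'+c'$ shows that $a',b',c'$ are pairwise coprime. On the right-hand side, $d^{b}d^{c}=d^{b+c}=d^{a}$, which cancels $d^a$ on the left. We are left with
\begin{align*}
    |a'|^{a}=|b'|^{b}\,|c'|^{c}.
\end{align*}

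Next we take $p$-adic valuations. Suppose a prime $p$ divides $a'$. By pairwise coprimality, $p$ divides neither $b'$ nor $c'$, so the right-hand side has $p$-adic valuation $0$. The left-hand side has valuation $a\cdot v_p(a')$, which is nonzero because $a\neq 0$. This is a contradiction, so $|a'|=1$. The same argument, applied to primes dividing $b'$ and then $c'$ and using $b\neq 0$ and $c\neq 0$, gives $|b'|=|c'|=1$.

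Finally, $a'=b'+c'$ would then express $\pm1$ as a sum of two elements of $\{\pm1\}$. Any such sum is even, so this is impossible, which proves the lemma. The only step needing care is the reduction to the pairwise coprime triple: before cancelling $d^a$ one must check that the exponents satisfy $b+c=a$.
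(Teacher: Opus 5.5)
Your proof is correct and follows essentially the same route as the paper: divide out $d=\gcd(a,b)$ to obtain a pairwise coprime triple, then use unique factorization to force all three to be $\pm1$, and conclude by contradiction. Your version is a bit more explicit than the paper's. You first combine the two $(a-b)$-factors into $c^{c}$, so that each of $a',b',c'$ appears exactly once with a nonzero exponent. You also spell out the parity contradiction that the paper leaves as ``not possible''.
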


\begin{proof}
Let $d:=\gcd(a,b) \in \bZ_{>0}$ and $x:=a/d$, $y:=b/d$. Then the equation is equivalent to $x^a(x-y)^b=\pm y^b(x-y)^a$. Since $\gcd(x,y)=\gcd(x,x-y)=\gcd(y,x-y)=1$, the three integers $x, y$, and $x-y$ are pairwise coprime. After moving all negative exponents to the opposite side, we obtain an equality of two monomials in the pairwise coprime integers $x, y$, and $x-y$ with non-negative exponents.
Then by the uniqueness of the prime factorization, we get $x,y,x-y\in \{\pm 1\}$, which is not possible. 
\end{proof}

\begin{ex}\label{ex:refined_non_integral}
When $A^+=A^- \neq 0$, we have the following example with $\Ind_{g,t}^\phi \notin \bZ[t]$. 
Let $N=6$ and $\mathbf{a}=(1,1,-4,1,1)$. Then $A^+=A^-=-2$. The gluing equation becomes $z^2(1-z)^{-2}=1$, which reduces to $2z-1=0$. We have
\begin{align*}
    A^+(t)&=1-t-t^2-t^4-t^5+t^6,\\
    A^-(t)&=1-4t^3+t^6,
\end{align*}
and hence $T(t)=A^+(t)+\frac{1}{2}(t+t^2-4t^3+t^4+t^5) \notin \bZ[t]$, where we substituted $z=1/2$.
\end{ex}

\subsubsection{Index for $g=0$}

\begin{thm}\label{thm:FM_ind0}
Let $\phi$ be the period $1$ mutation loop associated with $\mathbf{a}\neq \mathbf{0}$ and $(A^+,A^-)\neq (0,0)$. 
\begin{enumerate}
    \item If $A^+A^-<0$, then $\Ind_0^\phi=0$. 
    \item If $A^+A^->0$ and $A^+ \neq A^-$, then $\Ind_0^\phi=\sgn A^+ \in \{\pm 1\}$.
    \item If $A^+=A^-=c \neq 0$, then $\Ind_0^\phi=\displaystyle\frac{|c|-1}{c}$. 
    \item If $(A^+,A^-)=(c,0)$ or $(0,c)$ with $c \neq 0$, then $\Ind_0^\phi=\displaystyle \frac{|c|-1}{2c}$.
\end{enumerate}
\end{thm}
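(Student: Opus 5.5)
We compute $\Ind_0^\phi=\sum_{x}T(x)^{-1}$ with $T=A^++(A^--A^+)z$ (from \eqref{eq:Kronecker_T} at $t=1$). By \cref{lem:FM_poly_presentation}, the sum runs over the roots $z$ of $f_\phi$ in $\bC$; these are simple by non-degeneracy, as established in the proof of \cref{thm:Fordy_Marsh}. We treat the four cases separately, using partial fractions and the global residue theorem in one variable. Concretely, for a polynomial $f$ with simple roots and a linear $T$,
\begin{align*}
\sum_{f(z)=0}\frac{1}{T(z)} = \sum_{f(z)=0}\operatorname{res}_z\frac{f'(z)}{f(z)T(z)}
= -\operatorname{res}_{z_0}\frac{f'}{fT}-\operatorname{res}_{\infty}\frac{f'}{fT},
\end{align*}
where $z_0$ is the root of $T$ (when $A^+\neq A^-$). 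This is \cref{cor:SiebertTian} with $k=1$ made explicit, and it replaces the condition $J/T\in R_{<N}$ by an explicit evaluation of the two extra residues.

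\textbf{Case $A^+\neq A^-$.} Put $z_0=A^+/(A^+-A^-)$. We use the logarithmic derivative of the multiplicative form of the gluing equation, $h(z)=z^{-A^+}(1-z)^{A^-}$. Away from $z=0,1$, the roots of $f_\phi$ are exactly the solutions of $h=1$, and
\begin{align*}
\frac{h'}{h}=-\frac{A^+}{z}-\frac{A^-}{1-z}=\frac{-A^++(A^+-A^-)z}{z(1-z)}=-\frac{T(z)}{z(1-z)}.
\end{align*}
It is cleaner to work with $f_\phi$ itself, though. In each subcase of \eqref{eq:FM_poly_presentation} we compute $\operatorname{res}_{z_0}$, which equals $f_\phi'(z_0)/(f_\phi(z_0)(A^--A^+))$, and $\operatorname{res}_\infty$, which is read off from degrees.
\begin{itemize}
\item For $A^+A^-<0$, say $A^+>0>A^-$, we have $f=z^{A^+}(1-z)^{-A^-}-1$. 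At $z_0$ the logarithmic derivative of $z^{A^+}(1-z)^{-A^-}$ vanishes, since it is proportional to $T$. Hence $f'(z_0)=0$, and $f(z_0)\neq0$ by \cref{lem:arithmetic}, so the $z_0$ residue is $0$. At infinity $f'/(fT)\sim \deg f/(z\cdot(A^--A^+)z)=O(z^{-2})$, so the residue there is also $0$. This gives (1).
\item For $A^+A^->0$ with $A^+\neq A^-$, say both positive, $f=z^{A^+}-(1-z)^{A^-}$, so $f'/f\sim\max(A^+,A^-)/z$ at infinity, again with vanishing residue. At $z_0$ the residue is nonzero. Rewriting $f'(z_0)/f(z_0)$ via $z^{A^+}=(1-z)^{A^-}\cdot u$ with $u=z_0^{A^+}(1-z_0)^{-A^-}\neq1$, the factor $u$ cancels after using the vanishing of the logarithmic derivative, and the residue becomes $\pm1/(\ldots)$. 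I expect the final value to be $-\operatorname{res}=\sgn A^+$. The bookkeeping of which term dominates and of the signs is the step I expect to be most error-prone. I would check it against small cases such as the Kronecker quiver, where one of $A^\pm$ equals $2$ or $0$, and the $A_2$ example where $\Ind_0=1$.
\end{itemize}

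\textbf{Case $A^+=A^-=c$.} Here $T=c$ is constant, so $\Ind_0^\phi=\dim R_\phi/c$. The equation $z^c=(1-z)^c$ (or its inverse) has degree $|c|-1$ after the leading terms cancel, and its roots $z=1/(1+\zeta)$ with $\zeta^c=1$, $\zeta\neq-1$, are distinct. The count needs care with $\zeta=-1$ when $c$ is even: the equation has degree $|c|-1$ in $z$, giving $(|c|-1)/c$. This is (3).

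\textbf{Case $(c,0)$.} We have $T=c(1-z)$ over the roots $z^{|c|}=1$, $z\neq1$, and $\sum_{\zeta\neq1}1/(1-\zeta)=(|c|-1)/2$ by the standard identity (pair $\zeta$ with $\bar\zeta$, using $1/(1-\zeta)+1/(1-\bar\zeta)=1$). This gives $(|c|-1)/(2c)$. The case $(0,c)$ is symmetric under $z\leftrightarrow1-z$. This is (4).
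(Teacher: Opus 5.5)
Your approach is correct, and it is a mild but real variant of the paper's. Both arguments rest on the global residue theorem for $f_\phi$. The paper stays inside the Siebert--Tian framework. It exhibits a polynomial $U$ with $J\equiv UT \bmod f_\phi$ and $\deg U\le N$:
\begin{itemize}
\item when $A^+>0>A^-$, $U=z^{A^+-1}(1-z)^{-A^--1}$, and then $J=UT$ exactly;
\item when $A^\pm>0$, $U=z^{A^+-1}+(1-z)^{A^--1}$, and then $UT-J=(A^--A^+)f_\phi$.
\end{itemize}
It then evaluates $\mathrm{Res}_{f_\phi}(U)$ from degrees alone, using $J\equiv (N+1)U$ modulo lower-degree terms. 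You instead keep $1/T$ as a meromorphic function and pay for it with one extra residue at $z_0$.

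The two arguments are literally equivalent. Dividing $UT-J=(A^--A^+)f_\phi$ by $f_\phi T$ gives $f_\phi'/(f_\phi T)=U/f_\phi-1/(z-z_0)$. This is exactly your residue $-1$ at $z_0$, and in case (1) it is your observation that $f_\phi'(z_0)=0$. Your version is more elementary and spares you from having to guess $U$. The paper's version is the one that extends to multivariable complete-intersection presentations, where $1/T$ has no single explicit pole.

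Two points still need to be supplied.

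\textbf{Case (2).} You stop at ``I expect''. The computation is one line. $T(z_0)=0$ means $A^+/z_0=A^+-A^-=-A^-/(1-z_0)$, with $z_0\neq 0,1$ because $A^\pm\neq0$. Hence, for $f_\phi=z^{A^+}-(1-z)^{A^-}$,
\[
f_\phi'(z_0)=A^+z_0^{A^+-1}+A^-(1-z_0)^{A^--1}=(A^+-A^-)\bigl(z_0^{A^+}-(1-z_0)^{A^-}\bigr)=(A^+-A^-)f_\phi(z_0).
\]
So $\mathrm{res}_{z_0}=(A^+-A^-)/(A^--A^+)=-1$, using only $f_\phi(z_0)\neq0$, and therefore $\Ind_0^\phi=1$. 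For $A^\pm<0$, $f_\phi$ is the polynomial attached to $(-A^+,-A^-)$ while $T$ changes sign. This gives $\Ind_0^\phi=-1=\sgn A^+$.

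\textbf{Case (3).} You treat ``$c$ even'' as a subcase. However, both the cancellation of leading terms (degree $|c|-1$) and the exclusion of $\zeta=-1$ require $|c|$ to be even. For odd $c$ there would be $|c|$ roots and $\Ind_0^\phi=\sgn c$, contradicting the claimed formula. Evenness is forced by \cref{rem:A_parity}: palindromicity prevents $A^+$ and $A^-$ from both being odd. The paper invokes this remark at exactly this point, and your argument must cite it too.
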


\begin{proof}
In the presentation $R_\phi^\bC=\bC[z]/(f_\phi)$ given in \cref{lem:FM_poly_presentation}, $\{f_\phi\}$ gives a Gr\"obner basis of the defining ideal with respect to $d=(1)$, namely the ordinary degree. We have
\begin{align*}
    \deg f_\phi=\begin{cases}
        \max\{|A^+|,|A^-|\} & \mbox{if $A^+A^-> 0$ and $A^+\neq A^-$}, \\
        |A^+|+|A^-| & \mbox{if $A^+A^- < 0$}, \\
        |c|-1 & \mbox{if $(A^+,A^-)=(c,c)$, $(c,0)$ or $(0,c)$ with $c \neq 0$},
    \end{cases}
\end{align*}
and $N:=\deg f_\phi-1$. Let $J:=\frac{d}{dz}f_\phi $ denote the Jacobian. We are going to find $U \in \bC[z]$ satisfying 
\begin{align*}
    J\equiv UT \mod (f_\phi), \quad \deg U \leq N
\end{align*}
in each case. Then by \cref{cor:SiebertTian}, the genus-$0$ index is computed as $\Ind_0^\phi=\mathrm{Res}_{f_\phi}(U)$.
\begin{description}
\item[The case $A^+A^-<0$]
First consider the case $A^+>0$ and $A^-<0$.
Then we have $U=z^{A^+-1}(1-z)^{-A^--1}$. Indeed, by definition
\begin{align*}
    J &=A^+ z^{A^+-1}(1-z)^{-A^-} +A^- z^{A^+}(1-z)^{-A^--1} \\
    &= z^{A^+-1}(1-z)^{-A^--1}(A^+ (1-z) + A^- z) = z^{A^+-1}(1-z)^{-A^--1} T,
\end{align*}
and $\deg U =|A^+|+|A^-|-2=N-1$. Therefore, by \cref{cor:SiebertTian}, we obtain $\Ind_0^\phi=0$. The other case $A^+<0$ and $A^->0$ is treated similarly by replacing the formulas by $(A^+,A^-) \mapsto (-A^+,-A^-)$. 

\item[The case $A^+A^->0$ and $A^+ \neq A^-$] First consider the case $A^+,A^->0$. Then we have $U= z^{A^+-1}+ (1-z)^{A^--1}$. Indeed,
\begin{align*}
    J=A^+ z^{A^+-1} +A^- (1-z)^{A^--1} 
\end{align*}
by definition, and we find that $UT-J =(A^- - A^+)f_\phi$ by a direct calculation. Therefore, $J \equiv UT$ mod $(f_\phi)$, and
\begin{align*}
    \deg U =\max\{A^+,A^-\}-1 =N.
\end{align*}
Observe that $J=\max\{A^+,A^-\} U=(N+1)U$ modulo lower degree terms. For example, if $A^+>A^-$, then
\begin{align*}
    J=A^+ U + (A^--A^+)(1-z)^{A^- -1}.
\end{align*}
The other cases are treated similarly. Therefore, by the normalization $\mathrm{Res}_{f_\phi}(J)=\dim_\bC R=N+1$ in \cref{prop:SiebertTian}, we obtain
\begin{align*}
    \Ind_0^\phi = \mathrm{Res}_{f_\phi}(U) =\frac{1}{N+1}\mathrm{Res}_{f_\phi}(J) = 1.
\end{align*}
The case $A^+,A^-<0$ is treated similarly by replacing the formulas by $(A^+,A^-) \mapsto (-A^+,-A^-)$. 
    \item[The case $A^+=A^-=c$] In this case, $T=c$ is a constant. By \cref{rem:A_parity}, $|c|$ is even. 
    Hence,
    \begin{align*}
        \Ind_0^\phi=\mathrm{Res}_{f_\phi}\bigg(\frac{J}{c}\bigg) = \frac{\dim_\bC R_\phi^\bC}{c}.
    \end{align*}
    The gluing equation becomes $(z^{-1}-1)^{|c|}=1$. This is equivalent to $w^{|c|}=1$ and $z=(1+w)^{-1}$. 
    Since $|c|$ is even, all the $|c|$-th roots of unity but $-1$ corresponds to the fixed points. Therefore, $\dim_\bC R^\bC=|c|-1$.
    \item[The case $A^-=0$] Let $c:=A^+ \neq 0$. The gluing equation becomes $z^{|c|}=1$, $z \neq 1$, and we have $T=c(1-z)$. Therefore from the definition, we obtain
    \begin{align*}
        \Ind_0^\phi= \frac{1}{c}\sum_{z^c=1,~z \neq 1} \frac{1}{1-z}.
    \end{align*}
    Observe that 
    \begin{align*}
        \frac{1}{1-z} + \frac{1}{1-z^{-1}} = 1
    \end{align*}
    for any $z \neq 1$.
    If $|c|$ is odd, then the solutions of $z^c=1$, $z \neq 1$ are grouped into $(|c|-1)/2$ conjugate pairs, each of which contributes by $1$. Therefore 
    \begin{align*}
        \Ind_0^\phi= \frac{|c|-1}{2c}.
    \end{align*}
    If $|c|$ is even, then $z=-1$ is a self-conjugate solution, which contributes by $1/(1-z)=1/2$. The other solutions are grouped into $(|c|-2)/2$ conjugate pairs. Therefore, 
    \begin{align*}
        \Ind_0^\phi= \frac{1}{c}\bigg(\frac{|c|-2}{2} + \frac{1}{2}\bigg) = \frac{|c|-1}{2c}.
    \end{align*}
\end{description}
\end{proof}


\cref{tab:index-g0} is obtained by numerical computation using Mathematica. 
Observe that non-integer values appear on the loci $A^+=0$, $A^-=0$ or $A^+=A^-$.

\newcommand{\notrealized}{$\bullet$}

\begin{table}[ht]
  \centering
  \renewcommand{\arraystretch}{1.15}
  \setlength{\tabcolsep}{5pt}
  \begin{tabular}{c|cccccccc}
    \hline
    \backslashbox{\(A^+\)}{\(A^-\)}
      & \(-5\) & \(-4\) & \(-3\) & \(-2\)
      & \(-1\) & \(0\) & \(1\) & \(2\) \\
    \hline
    \(-5\)
      & \notrealized & \(-1\) & \notrealized & \(-1\)
      & \notrealized & \(-\frac{2}{5}\) & \notrealized & \(0\) \\
    \(-4\)
      & \(-1\) & \(-\frac{3}{4}\) & \(-1\) & \(-1\)
      & \(-1\) & \(-\frac{3}{8}\) & \(0\) & \(0\) \\
    \(-3\)
      & \notrealized & \(-1\) & \notrealized & \(-1\)
      & \notrealized & \(-\frac{1}{3}\) & \notrealized & \(0\) \\
    \(-2\)
      & \(-1\) & \(-1\) & \(-1\) & \(-\frac{1}{2}\)
      & \(-1\) & \(-\frac{1}{4}\) & \(0\) & \(0\) \\
    \(-1\)
      & \notrealized & \(-1\) & \notrealized & \(-1\)
      & \notrealized & \(0\) & \notrealized & \(0\) \\
    \(0\)
      & \(-\frac{2}{5}\) & \(-\frac{3}{8}\)
      & \(-\frac{1}{3}\) & \(-\frac{1}{4}\)
      & \(0\) & N/A & \(0\) & \(\frac{1}{4}\) \\
    \(1\)
      & \notrealized & \(0\) & \notrealized & \(0\)
      & \notrealized & \(0\) & \notrealized & \(1\) \\
    \(2\)
      & \(0\) & \(0\) & \(0\) & \(0\)
      & \(0\) & \(\frac{1}{4}\) & \(1\)
      & \textcolor{red}{\(1\)} \\[2mm]
  \end{tabular}
  \caption{Possible values of \(\Ind_0^\phi\) for
    \(A^+,A^-\in\{-5,\dots,2\}\).
    Bullet entries are excluded by \cref{rem:A_parity}.
    N/A means that the index is undefined.
    The entry in \textcolor{red}{red} corresponds to the exceptional
    case \(\mathbf{a}=\mathbf{0}\).}
  \label{tab:index-g0}
\end{table}

    

    

\subsection{Special case: the Kronecker quivers}
Let us investigate the case $N=2$, $a_2=-K$ for $K \geq 1$ in more detail. 
The exchange matrix is $B=\begin{psmallmatrix}0 & -K \\ K & 0\end{psmallmatrix}$, which corresponds to the Kronecker quiver with $K$ arrows. We have $A^+(t)= 1+t^2$, $A^-(t)=1-Kt+t^2$. In particular, 
$A^+=2$ and $A^-=2-K$.

The values of $\Ind_g^\phi$ (\cref{tab:Kronecker_index}) and $\Ind_{g,t}^\phi$ (\cref{tab:Kronecker_refined_index}) for small values $K \geq 1$ are obtained by numerical computation using Mathematica.


\begin{table}[ht]
    \centering
    \begin{tabular}{|c|cccccc}
    \hline
    \backslashbox{$g$}{$K$} & 1 & 2 & 3 & 4 & 5 & 6 \\
    \hline
    0 & 1 & $\frac14$ & 0 & 0 & 0 & 0 \\
    1 & 2 & 1 & 3 & 4 & 5 & 6 \\
    2 & 5 & 4 & 3 & 0 & -5 & -12 \\
    3 & 15 & 16 & 9 & 16 & 35 & 72 \\
    4 & 50 & 64 & 96 & 0 & -65 & -240 \\
    5 & 175 & 256 & 357 & 1088 & 275 & 1056 \\
    6 & 625 & 1024 & 1278 & 0 & 14960 & -4032 \\
    7 & 2250 & 4096 & 6042 & 12544 & -91435 & 296448 \\
    8 & 8125 & 16384 & 26337 & 0 & 649945 & -3984384 \\[2mm]
    \end{tabular}
    \caption{Values of $\Ind_g^\phi$ for the Kronecker quiver with $K \geq 1$.}
    \label{tab:Kronecker_index}
\end{table}

\newcommand{\tabmath}[1]{{\footnotesize$\displaystyle #1$}}


\begin{table}[ht]
    \centering
    \begin{tabular}{|c|ccc}
    \hline
    \backslashbox{$g$}{$K$} & 1 & 2 & 3 \\
    \hline
    0 &
    \tabmath{\frac{2 + t + 2 t^2}{1 + t + t^2 + t^3 + t^4}} &
    \tabmath{\frac{1}{(1 + t)^2}} &
    \tabmath{\frac{3 (-1 + t)^2 (1 + t^2)}
    {1 - 3 t + 3 t^2 + 21 t^3 + 3 t^4 - 3 t^5 + t^6}}
    \\
    1 &
    \tabmath{2} &
    \tabmath{1} &
    \tabmath{3}
    \\
    2 &
    \tabmath{2 + t + 2 t^2} &
    \tabmath{(1 + t)^2} &
    \tabmath{3 (1 - t + t^2)}
    \\
    3 &
    \tabmath{2 + 2 t + 7 t^2 + 2 t^3 + 2 t^4} &
    \tabmath{(1 + t)^4} &
    \tabmath{3 (1 - 2 t + 5 t^2 - 2 t^3 + t^4)}
    \\
    4 &
    \tabmath{(2 + t + 2 t^2) (1 + t + 6 t^2 + t^3 + t^4)} &
    \tabmath{(1 + t)^6} &
    \tabmath{3 (1 - 3 t + 12 t^2 + 12 t^3 + 12 t^4 - 3 t^5 + t^6)}
    \\[2mm]
    \end{tabular}
    \caption{Values of $\Ind_{g,t}^\phi$ for the Kronecker quiver with $K \geq 1$.}
    \label{tab:Kronecker_refined_index}
\end{table}

\begin{thm}\label{thm:generating}
For $K \geq 3$, the generating function of indices is computed as
\begin{align*}
    E_\phi(s) &= -(-sK)^K+(1+2s)^2 (1+(2-K)s)^{K-2} , \\
    Z_\phi(s) &= \frac{K^{K+1}(-s)^{K-1}+K(1+2s)(1+(2-K)s)^{K-3}
\bigl((4-K)-2(K-2)s\bigr)}
{-(-sK)^K+(1+2s)^2(1+(2-K)s)^{K-2}
}.
\end{align*}
\end{thm}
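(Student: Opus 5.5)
Since $E_\phi(s)=\prod_{x}(1+b_x s)$ with $b_x=T(x)$, the plan is to identify $E_\phi(s)$ with a rescaled characteristic polynomial of $m_T$ on $R_\phi$, and then read off $Z_\phi=E_\phi'/E_\phi$ by logarithmic differentiation. For the Kronecker quiver we have $A^+=2$ and $A^-=2-K<0$ when $K\geq 3$, so \cref{lem:FM_poly_presentation} gives $R_\phi\cong\bQ[z]/(f_\phi)$ with $f_\phi=z^2(1-z)^{K-2}-1$. This polynomial has degree $K$ and leading coefficient $(-1)^K$. By \eqref{eq:Kronecker_T} at $t=1$, $T=A^++(A^--A^+)z=2-Kz$. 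The non-degeneracy of $\phi$ was already established in the proof of \cref{thm:Fordy_Marsh}. Hence the fixed points are exactly the $K$ roots $z_x$ of $f_\phi$, all simple, and $b_x=2-Kz_x$.

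Next I will compute $E_\phi(s)=\prod_x(1+s(2-Kz_x))=\prod_x\bigl((1+2s)-sKz_x\bigr)$. Writing $f_\phi(z)=(-1)^K\prod_x(z-z_x)$, we get
\begin{align*}
\prod_x(w-z_x)=(-1)^K f_\phi(w),\qquad \prod_x\bigl((1+2s)-sKz_x\bigr)=(sK)^K\prod_x\Bigl(\frac{1+2s}{sK}-z_x\Bigr).
\end{align*}
Substituting $w=(1+2s)/(sK)$ gives $E_\phi(s)=(-sK)^K f_\phi(w)$. Now $1-w=\bigl(sK-1-2s\bigr)/(sK)=-\bigl(1+(2-K)s\bigr)/(sK)$. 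Therefore
\begin{align*}
(-sK)^K w^2(1-w)^{K-2}=(-1)^K(sK)^K\frac{(1+2s)^2}{(sK)^2}\cdot\frac{(-1)^{K-2}(1+(2-K)s)^{K-2}}{(sK)^{K-2}}=(1+2s)^2(1+(2-K)s)^{K-2}.
\end{align*}
The term $-(-sK)^K$ comes from the $-1$ in $f_\phi$. This yields the stated formula for $E_\phi$. As a sanity check, the constant term equals $1$.

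For $Z_\phi$ I will use the lemma $Z_\phi=E_\phi'/E_\phi$ and differentiate directly. The derivative of $-(-sK)^K=-(-1)^K K^K s^K$ is $-(-1)^K K^{K+1}s^{K-1}=K^{K+1}(-s)^{K-1}$, which matches the first numerator term. The derivative of the second term is
\begin{align*}
(1+2s)(1+(2-K)s)^{K-3}\bigl[4(1+(2-K)s)+(1+2s)(K-2)(2-K)\bigr].
\end{align*}
Simplifying the bracket gives $4+8s-4Ks+(2-K)(K-2)(1+2s)$. I will verify that this equals $K\bigl((4-K)-2(K-2)s\bigr)$. The constant term is $4-(K-2)^2=K(4-K)$. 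The $s$-coefficient is $8-4K-2(K-2)^2=-2K^2+4K=-2K(K-2)$. Both agree.

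The only delicate points are the sign bookkeeping and confirming that $\Fix_\phi$ is exactly the zero set of $f_\phi$ with multiplicity one, so that each $x$ contributes once. Both are covered by \cref{lem:FM_poly_presentation}, the non-degeneracy established earlier, and \cref{lem:fixed_point_Artinian}. The hypothesis $K\geq 3$ guarantees $A^-<0$, which selects the relevant case of \eqref{eq:FM_poly_presentation} and makes the exponent $K-3\geq 0$.
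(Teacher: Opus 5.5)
Your proof is correct and is essentially the paper's argument up to a change of coordinates. The paper works in $y$ with fixed-point polynomial $(1+y)^K-y^2$, inverts the M\"obius map $y\mapsto b_y=\frac{2+(2-K)y}{1+y}$ to obtain $\widetilde{E}_\phi(s)=\prod_y(s-b_y)$, and then takes $E_\phi(s)=(-s)^K\widetilde{E}_\phi(-s^{-1})$. You instead work in $z=z_+=y/(1+y)$, where $T=2-Kz$ is affine, and substitute $w=(1+2s)/(sK)$ directly into $f_\phi$; your sign bookkeeping and your derivative computation for $Z_\phi$ are both correct.
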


\begin{proof}
Let $f(y):=(1+y)^K -y^2=0$ be the fixed point equation from \eqref{eq:FM_fixed_y}. For each root $y$ of $f$, we obtain
\begin{align*}
    b_y := T(y)=2-K \frac{y}{1+y} = \frac{2+(2-K)y}{1+y}
\end{align*}
from \eqref{eq:Kronecker_T}. By solving this relation for $y$, we obtain
\begin{align*}
    y= \frac{-b_y+2}{b_y+K-2}.
\end{align*}
From this, we obtain $f(\frac{-s+2}{s+K-2})|_{s=b_y} = f(y) =0$.
The combination
\begin{align*}
    -(s+K-2)^K f\left(\frac{-s+2}{s+K-2}\right) = -K^K + (-s+2)^2(s+K-2)^{K-2}
\end{align*}
is a monic polynomial of $s$ of degree $K$, having $K$ roots $s=b_y$. Therefore it coincides with $\widetilde{E}_\phi(s):=\prod_{y} (s -b_y)$. Then
\begin{align*}
    E_\phi(s)&=\prod_y (1+b_y s) \\
    &= (-s)^K \prod_y (-s^{-1}-b_y) \\
    &= (-s)^K \widetilde{E}_\phi(-s^{-1}) \\
    &= (-s)^K [-K^K + (s^{-1}+2)^2(-s^{-1}+K-2)^{K-2}] \\
    &= -(-sK)^K +(1+2s)^2 (1+(2-K)s)^{K-2}, 
\end{align*}
as asserted. The second equation is directly obtained by differentiation.
\end{proof}


\end{document}